\theoremstyle{plain}
\newtheorem{theorem}{Theorem}[section]
\newtheorem{corollary}[theorem]{Corollary}
\newtheorem{proposition}[theorem]{Proposition}
\newtheorem{lemma}[theorem]{Lemma}
\theoremstyle{definition}
\newtheorem{definition}[theorem]{Definition}
 \DeclareMathOperator{\re}{Re\,}
 \DeclareMathOperator{\im}{Im\,}
 \DeclareMathOperator{\e}{e}
 \DeclareMathOperator{\Id}{\mathrm{Id}}
 \DeclareMathOperator{\dist}{dist\,}
 \DeclareMathOperator{\sign}{sign}
\newcommand{\K}{\mathbb{K}}
\newcommand{\C}{\mathbb{C}}
\newcommand{\R}{\mathbb{R}}
\newcommand{\inner}[1]{\ensuremath{\left\langle #1\right\rangle}}
\newcommand{\norm}[1]{\ensuremath{\lVert#1\rVert}}
\newcommand{\nor}[1]{\ensuremath{\left\|#1\right\|}}
\newcommand{\eps}{\varepsilon}
\renewcommand{\leq}{\leqslant}
\renewcommand{\geq}{\geqslant}
\begin{document}

\dedicatory{Dedicated to the memory of Robert R.\ Phelps}

\title[The Bishop-Phelps-Bollob\'{a}s theorem for operators on $\L_1(\mu)$]{The Bishop-Phelps-Bollob\'{a}s theorem for operators on $\boldsymbol{L_1(\mu)}$}

\author[Choi]{Yun Sung Choi}
\address[Choi]{Department of Mathematics, POSTECH, Pohang (790-784), Republic of Korea}
\email{\texttt{mathchoi@postech.ac.kr}}

\author[Kim]{Sun Kwang Kim}
\address[Kim]{School of Mathematics, Korea Institute for Advanced Study (KIAS), 85 Hoegiro, Dongdaemun-gu, Seoul 130-722, Republic of Korea}
\email{\texttt{lineksk@gmail.com}}

\author[Lee]{Han Ju Lee}
\address[Lee]{Department of Mathematics Education,
Dongguk University - Seoul, 100-715 Seoul, Republic of Korea}
\email{\texttt{hanjulee@dongguk.edu}}

\author[Mart\'{\i}n]{Miguel Mart\'{\i}n}
\address[Mart\'{\i}n]{Departamento de An\'{a}lisis Matem\'{a}tico,
Facultad de Ciencias,
Universidad de Granada,
E-18071 Granada, Spain}
\email{\texttt{mmartins@ugr.es}}

\subjclass[2010]{Primary 46B20; Secondary 46B04, 46B22}
\keywords{Banach space, approximation, norm-attaining operators, Bishop-Phelps-Bollob\'{a}s theorem.}
\thanks{First named author partially supported by Basic Science Research Program through the National Research Foundation of Korea (NRF) funded by the Ministry of Education, Science and Technology (No. 2010-0008543), and also by Priority Research Centers Program through the National Research Foundation of Korea (NRF) funded by the Ministry of Education, Science and Technology (MEST) (No. 2012047640).
Third named author partially supported by Basic Science Research Program through the National Research Foundation of Korea(NRF) funded by the Ministry of Education, Science and Technology (2012R1A1A1006869).
Fourth named author partially supported by Spanish MINECO and FEDER project no.\ MTM2012-31755, Junta de Andaluc\'{\i}a and FEDER grants FQM-185 and P09-FQM-4911, and by CEI-Granada 2009.}

\begin{abstract}
In this paper we show that the Bishop-Phelps-Bollob\'as theorem holds for $\mathcal{L}(L_1(\mu), L_1(\nu))$ for all measures $\mu$ and $\nu$ and also holds for $\mathcal{L}(L_1(\mu),L_\infty(\nu))$ for every arbitrary measure $\mu$ and every localizable measure $\nu$. Finally, we show that the Bishop-Phelps-Bollob\'as theorem holds for two classes of bounded linear operators from a real $L_1(\mu)$ into a real $C(K)$ if $\mu$ is a finite measure and $K$ is a compact Hausdorff space. In particular, one of the classes includes all Bochner representable operators and all  weakly compact operators.
\end{abstract}

\date{March 25th, 2013}

\maketitle

\section{Introduction}

The celebrated Bishop-Phelps theorem of 1961 \cite{BP} states that for a Banach space $X$, every element in its dual space $X^*$ can be approximated by ones that attain their norms. Since then, there has been an extensive research to extend this result to bounded linear operators between Banach spaces  \cite{Bou,JW,Lin, Par, Sch} and non-linear mappings \cite{AAP, AGM, AFW,
 C, CK1, KL}. On the other hand, Bollob\'as \cite{Bol}, motivated by problems arising in the theory of numerical ranges, sharpened the Bishop-Phelps theorem in 1970, and got what is nowadays called the Bishop-Phelps-Bollob\'as theorem. Previous to presenting this result, let us introduce some notations. Given a (real or complex) Banach space $X$, we write $B_X$ for the unit ball, $S_X$ for its unit sphere, and $X^*$ for the topological dual space of $X$. If $Y$ is another Banach space, we write $\mathcal{L}(X,Y)$ to denote the space of all bounded linear operators from $X$ into $Y$.

\begin{theorem}[Bishop-Phelps-Bollob\'{a}s theorem]\label{thm:BPBTheorem}
Let $X$ be a Banach space. If $x\in S_X$ and $x^*\in S_{X^*}$ satisfy $|x^*(x)-1|< \eps^2/4$, then there exist $y\in
S_X$ and $y^*\in S_{X^*}$ such that $y^*(y)=1$, $\|x^*-y^*\|<\eps$ and $\|x-y\|<\eps$.
\end{theorem}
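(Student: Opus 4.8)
The plan is to prove the statement by the classical variational argument of Bollob\'as, organized around a cone-induced partial order on the unit ball. Passing to real parts, note first that the hypothesis $|x^*(x)-1|<\eps^2/4$ forces $\re x^*(x)>1-\eps^2/4$; set $\delta=\eps^2/4$ and, since the distance conclusions are elementary when $\eps\geq2$, assume $0<\eps<2$. Fix a parameter $k\in(0,1)$ to be calibrated at the very end, and introduce the closed convex cone $C=\{w\in X:k\|w\|\leq\re x^*(w)\}$ together with the relation $z\preceq w\iff w-z\in C$, that is, $\|z-w\|\leq\tfrac1k\,\re x^*(w-z)$. One checks readily that $\preceq$ is a partial order (antisymmetry uses $\re x^*(w-z)\geq0$ on $C$), and that for $k$ small enough $x$ lies in the interior of $C$, since $\re x^*(x)/\|x\|>1-\delta$.

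First I would run Zorn's lemma on the set $\{z\in B_X:z\succeq x\}$. Along any $\preceq$-chain the values $\re x^*(\cdot)$ increase and are bounded above by $1$, so the inequality $\|z-w\|\leq\tfrac1k|\re x^*(z)-\re x^*(w)|$ shows the chain is Cauchy; completeness of $X$ and closedness of $B_X$ then provide an upper bound in $\{z\succeq x\}$, and Zorn yields a $\preceq$-maximal $y\in B_X$ with $x\preceq y$. From $x\preceq y$ together with $\re x^*(y)\leq 1$ and $\re x^*(x)>1-\delta$ one immediately gets $\|x-y\|\leq\tfrac1k\,\re x^*(y-x)<\delta/k$, which is the estimate that will control $\|x-y\|$.

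Next I would extract the functional from maximality. Maximality says $(y+C)\cap B_X=\{y\}$; as $y+C$ has nonempty interior, Hahn--Banach separation produces a nonzero real-linear functional, realized as $\re y^*$ for some $y^*\in X^*$, with $\re y^*(z)\leq\re y^*(y)\leq\re y^*(w)$ for all $z\in B_X$ and $w\in y+C$. Normalizing $\|y^*\|=1$, the left inequality gives $\re y^*(y)=\sup_{B_X}\re y^*=1$, whence $\|y\|=1$ and in fact $y^*(y)=1$, while the right inequality gives $\re y^*\geq0$ on the cone $C$. It remains to bound $\|x^*-y^*\|$: since $\re y^*\geq0$ on $C=\{w:p(w)\leq0\}$ for the sublinear function $p(w)=k\|w\|-\re x^*(w)$, and $p(x)<0$ supplies a Slater condition, a homogeneous Farkas-type lemma yields $\lambda\geq0$ with $-\re y^*(w)\leq\lambda\,p(w)$ for all $w$, i.e.\ $\re(\lambda x^*-y^*)(w)\leq\lambda k\|w\|$; replacing $w$ by $-w$ and adjusting phases gives $\|\lambda x^*-y^*\|\leq\lambda k$, and comparing norms forces $|\lambda-1|\leq\lambda k$, so that $\|x^*-y^*\|\leq|1-\lambda|+\lambda k\leq2\lambda k\leq2k/(1-k)$.

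The main obstacle is the simultaneous calibration of $k$: I must arrange $\|x-y\|<\delta/k\leq\eps$, i.e.\ $k>\eps/4$, and $\|x^*-y^*\|\leq2k/(1-k)<\eps$, i.e.\ $k<\eps/(2+\eps)$, at the same time. These are compatible precisely because $\eps/4<\eps/(2+\eps)$ for $0<\eps<2$, and this is exactly where the quadratic gap $\eps^2/4$ in the hypothesis is consumed: any $k$ in the nonempty interval $\bigl(\eps/4,\ \eps/(2+\eps)\bigr)$ (say its midpoint) delivers both strict bounds and completes the proof.
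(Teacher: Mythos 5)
The paper does not prove this statement: it is quoted as the classical Bishop--Phelps--Bollob\'as theorem and attributed to Bollob\'as's 1970 note, so there is no in-paper argument to compare against. Your proof is the standard one for this result --- the Bishop--Phelps cone $C=\{w: k\|w\|\leq \re x^*(w)\}$, Zorn's lemma on the induced order to get a maximal point $y$, Hahn--Banach separation of $B_X$ from $y+C$ to get $y^*$, and Phelps's functional-comparison estimate $\|\lambda x^*-y^*\|\leq \lambda k$ --- and the calibration $k\in(\eps/4,\,\eps/(2+\eps))$ does deliver exactly the constants in the statement. The one genuine slip is in the Farkas step: you invoke ``$p(x)<0$'' as the Slater condition, i.e.\ $k<\re x^*(x)$, and likewise claim $x\in\operatorname{int}C$. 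Since the hypothesis only gives $\re x^*(x)>1-\eps^2/4$, this can fail for $\eps$ close to $2$, where your calibration forces $k>\eps/4$ while $1-\eps^2/4$ may be smaller than that. The repair is immediate: the Slater point need not be $x$; since $\|x^*\|=1>k$ there is always some $w_0$ with $\re x^*(w_0)>k\|w_0\|$, which also gives $\operatorname{int}C\neq\emptyset$ for the separation step. Neither $x\in\operatorname{int}C$ nor $p(x)<0$ is used anywhere else, so with that substitution the argument is complete and correct (the $\eps\geq 2$ edge case you set aside is indeed elementary, e.g.\ by applying the result for a slightly smaller $\eps'$).
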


In 2008, Acosta, Aron, Garc\'ia and Maestre \cite{AAGM2} introduced the Bishop-Phelps-Bollob\'as property to study
extensions of the theorem above to operators between Banach spaces.

\begin{definition}
Let $X$ and $Y$ be Banach spaces. The pair $(X,Y)$ is said to have the \emph{Bishop-Phelps-Bollob\'as property} (\emph{BPBp}) if for every $0<\eps<1$, there is $\eta(\eps)>0$ such that for every $T\in \mathcal{L}(X,Y)$ with $\|T\|=1$ and $x_0 \in S_X$ satisfying $\|T(x_0)\|>1-\eta(\eps)$, there exist $y_0\in S_X$  and $S\in \mathcal{L}(X,Y)$ with $\|S\|=1$ satisfying the following conditions:
\[
\|Sy_0\|=1, \qquad \|y_0 - x_0 \|<\eps,  \quad \text{and} \quad \|S- T\|<\eps.
\]
In this case, we also say that the Bishop-Phelps-Bollob\'{a}s theorem holds for $\mathcal{L}(X,Y)$.
\end{definition}

This property has been studied by many authors. See for instance \cite{ABGM, ACK, ACKLM, CasGuiKad, CK2, ChoiKimSK, Kim-c_0, KimLee}. Observe that the BPBp of a pair $(X,Y)$ implies  obviously that the set of norm attaining operators is dense in $\mathcal{L}(X,Y)$. However, its converse is false, as shown by the pair $(X,Y)$ where $X$ is the $2$-dimensional $L_1$-space and $Y$ is a strictly, but not uniformly convex space (see \cite{AAGM2} or \cite{ACKLM}). Let us also comment that the Bishop-Phelps-Bollob\'{a}s theorem states that the pair $(X,\K)$ has the Bishop-Phelps-Bollob\'{a}s property for every Banach space $X$ ($\K$ is the base scalar field $\R$ or $\C$).

In this paper we first deal with the problem of when the pair $(L_p(\mu),L_q(\nu))$ has the BPBp. Let us start with a presentation of both already known results and our new results. Iwanik \cite{Iwa} showed in 1979 that the set of norm-attaining operators from $L_1(\mu)$ to $L_1(\nu)$ is dense in the space $\mathcal{L}(L_1(\mu),L_1(\nu))$ for arbitrary measures $\mu$ and $\nu$. Our first main result in this paper is that the pair $\mathcal{L}(L_1(\mu),L_1(\nu))$ has the BPBp. This is the content of section~\ref{sec:L_1-L_1}.

On the other hand, Aron et al.\ \cite{ACGM} showed that if $\mu$ is a $\sigma$-finite measure, then the pair $(L_1(\mu), L_\infty[0,1])$ has the BPBp, improving a result of Finet and Pay\'{a} \cite{FP} about the denseness of norm-attaining operators. We generalize this result in section~\ref{sec:L_1-L_infty} showing that $(L_1(\mu), L_\infty(\nu))$ has the BPBp for every measure $\mu$ and every localizable measure $\nu$. This is also a strengthening of a result of Pay\'a and Saleh \cite{PS} which stated only the denseness of norm-attaining operators.

One of the tools used to prove the results above is the fact that one can reduce the proofs to some particular measures. We develop this idea in section~\ref{sec:preliminary}, where, as its first easy application, we extend to arbitrary measures $\mu$ the result in \cite{CK2} that $(L_1(\mu),L_p(\nu))$ has the BPBp for $\sigma$-finite measures $\mu$.

The following result summarizes all what is known about the BPBp for the pair $(L_p(\mu),L_q(\nu))$.

\begin{corollary}
The pair $(L_p(\mu), L_q(\nu))$ has the BPBp
\begin{itemize}
\item[(1)] for all measures $\mu$ and $\nu$ if $p=1$ and $1\leq q<\infty$.
\item[(2)] for any measure $\mu$ and any localizable measure $\nu$ if $p=1$, $q=\infty$.
\item[(3)] for all measures $\mu$ and $\nu$ if $1<p<\infty$ and $1\leq q\leq \infty$.
\item[(4)] for all measures $\mu$ and $\nu$ if $p=\infty$, $q=\infty$, in the real case.
\end{itemize}
\end{corollary}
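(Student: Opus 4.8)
The plan is to treat this statement as a compendium result: each of the four clauses should be matched with an existing result, either one of the main theorems established in this paper or a known structural fact, rather than proved afresh. The items split naturally into the cases where the domain is $L_1(\mu)$ (items (1) and (2)), which carry the genuinely new content, and the cases $1<p<\infty$ and $p=q=\infty$ (items (3) and (4)), which follow from geometric properties of the domain.

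First I would dispatch items (1) and (2) by invoking the main theorems of the paper. For (1) with $q=1$, the assertion is exactly the first main result, that $\mathcal{L}(L_1(\mu),L_1(\nu))$ has the BPBp for arbitrary $\mu$ and $\nu$ (Section~\ref{sec:L_1-L_1}). For (1) with $1<q<\infty$, I would cite the extension obtained in Section~\ref{sec:preliminary}, namely that $(L_1(\mu),L_q(\nu))$ has the BPBp for \emph{every} measure $\mu$; this upgrades the $\sigma$-finite result of \cite{CK2} to arbitrary $\mu$ via the reduction to well-behaved measures developed there. Item (2) is precisely the second main theorem (Section~\ref{sec:L_1-L_infty}): $(L_1(\mu),L_\infty(\nu))$ has the BPBp for every measure $\mu$ and every localizable $\nu$.

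For item (3) the geometry of the domain alone forces the conclusion. Since $1<p<\infty$, the space $L_p(\mu)$ is uniformly convex by Clarkson's inequalities (for every measure $\mu$), and a uniformly convex domain is known to guarantee the BPBp with an arbitrary range space \cite{KimLee}. Hence $(L_p(\mu),Y)$ has the BPBp for every Banach space $Y$, in particular for $Y=L_q(\nu)$ with any $1\leq q\leq\infty$ and any $\nu$, which is stronger than what (3) requires. For item (4) I would use that, in the real scalar field, $L_\infty(\mu)$ is isometrically a real $C(K)$-space; then $(L_\infty(\mu),L_\infty(\nu))$ is a pair of real $C(K)$-spaces, and the conclusion follows from the known validity of the BPBp for such pairs in the real case.

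The point requiring care is not any single estimate but the bookkeeping: ensuring that each clause is paired with a result valid in the full generality claimed (arbitrary, not merely $\sigma$-finite or finite, measures). The only real subtleties are the passage from $\sigma$-finite to arbitrary $\mu$ in (1) for $1<q<\infty$ and the localizability hypothesis on $\nu$ in (2); both are absorbed by the reduction techniques of Section~\ref{sec:preliminary} and the arguments of the corresponding sections, so once those are in place the corollary is immediate.
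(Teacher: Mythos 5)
Your proposal matches the paper's own justification clause for clause: (1) and (2) are exactly Theorem~\ref{thm:main0}, Corollary~\ref{cor:L1Lq} and Theorem~\ref{thm:localizable}; (3) is uniform convexity of $L_p(\mu)$ for $1<p<\infty$ combined with the theorem of \cite{KimLee} (and \cite{ABGM}) that a uniformly convex domain yields the BPBp for every range space; and (4) is the isometric identification of a real $L_\infty$ space with a real $C(K)$ space together with \cite{ABCCKLLM}. The only, harmless, divergence is in (3), where you get uniform convexity of $L_p(\mu)$ for arbitrary $\mu$ directly from Clarkson's inequalities rather than passing through the paper's $\sigma$-finite-to-arbitrary reduction; both routes are valid.
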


(1) and (2) follows from the results of this paper (Corollary~\ref{cor:L1Lq}, Theorem~\ref{thm:main0} and Theorem~\ref{thm:localizable}). Since $L_p(\mu)$ is uniformly convex when $1<p<\infty$, (3) follows from \cite{ABGM, KimLee} in the $\sigma$-finite case, generalized here to arbitrary measures $\mu$ (Corollary~\ref{cor:L1Lq}). Finally, (4) follows from \cite{ABCCKLLM}, because every $L_\infty$ space is isometrically isomorphic to a $C(K)$ space.

As far as we know, the cases $(L_\infty(\mu), L_q(\nu))$ for $1\leq q<\infty$ and the complex case of (4) remain open.

Let $\mu$ be a finite measure. Since any $L_\infty$ space is isometrically isomorphic to $C(K)$ for some compact Hausdorff space $K$, it is natural to ask when $(L_1(\mu), C(K))$ has the BPBp. Schachermayer \cite{Sch2} showed that the set of all norm-attaining operators is not dense in $\mathcal{L}(L_1[0,1],C[0,1])$. Hence,  $(L_1[0,1], C[0,1])$ cannot have the BPBp. On the other hand, Johnson and Wolfe \cite{JW} proved that if $X$ is a Banach space and if either $Y$ or $Y^*$ is a $L_1(\mu)$ space, then every compact operator from $X$ into $Y$ can be approximated by norm-attaining finite-rank operators. They also showed that every weakly compact operator from $L_1(\mu)$ into $C(K)$ can be approximated by norm-attaining weakly compact ones. In this direction, Acosta et al.\ have shown that $(L_1(\mu), Y)$ has the BPBp for representable operators (in particular, for weakly compact operators) if $(\ell_1, Y)$ has the BPBp, and this is the case of $Y=C(K)$ \cite{ABGKM}.

On the other hand, Iwanik \cite{Iwa2} studied two classes of bounded linear operators from a real $L_1(\mu)$ space to a real $C(K)$ space such that every element of each class can be approximated by norm-attaining elements, and showed that one of the classes strictly contains all Bochner representable operators and all weakly compact operators. In section~\ref{sec:L_1-C(K)}, we deal with  Bishop-Phelps-Bollob\'as versions of these Iwanik's results. In particular, we show that for every $0<\eps<1$, there is $\eta(\eps)>0$ such that if $T\in\mathcal{L}(L_1(\mu),C(K))$ with $\|T\|=1$ is Bochner representable (resp.\ weakly compact) and $f_0\in S_{L_1(\mu)}$ satisfy $\|T f_0\|>1-\eta(\eps)$, then there is a Bochner representable (resp.\ weakly compact) operator $S\in\mathcal{L}(L_1(\mu),C(K))$ and $f\in S_{L_1(\mu)}$ such that $\|S f\|=\|S\|=1$, $\|S-T\|<\eps$ and $\|f-f_0\|<\eps$.

Let us finally comment that the proofs presented in sections \ref{sec:L_1-L_1} and \ref{sec:L_1-L_infty} are written for the complex case. Their corresponding proofs for the real case are easily obtained, even easier, from the ones presented there.

\section{Some preliminary results}\label{sec:preliminary}
We start with some terminologies and known facts about $L_1(\mu)$. Suppose that $(\Omega,\Sigma,\mu)$ is an arbitrary measure space and put $X=L_1(\mu)$. Suppose $G$ is a countable subset of $X$. Since the closed linear span $[G]$ of $G$ is separable, we may assume that $[G]$ is the closed linear span of a countable set $\{ \chi_{E_n} \}$ of characteristic functions of measurable subsets with finite positive measure. Let $E= \bigcup_{n} E_n$ and $Z = \{ f\chi_E\, :\, f\in X\}$. Then, $Z= L_1(\mu|_E)$, where $\mu|_E$ is restriction of the measure $\mu$ to the $\sigma$-algebra $\Sigma|_E = \{ E\cap A\, :\, A\in \Sigma\}$. Since $\mu|_E$ is $\sigma$-finite, $Z$ is isometrically (lattice) isomorphic to $L_1(m)$ for some positive finite Borel regular measure $m$ defined on a compact Hausdorff space by the Kakutani representation theorem (see \cite{Lac} for a reference). This space $Z$ is called the {\it band} generated by $G$, and the {\it canonical band projection} $P:X\longrightarrow Z$, defined by $P(f) := f\chi_E$ for $f\in X$, satisfies $\|f\| = \|Pf\| + \|(\Id-P)f\|$ for all $f\in X$. For more details, we refer the reader to the classical books \cite{Lac, Schaefer}.

Next, we present the following equivalent formulation of the BPBp from \cite{ACKLM} which helps to better understand the property and will be useful for our preliminary results. Given a pair $(X,Y)$ of Banach spaces, let
$$
\Pi(X,Y)= \{(x,T)\in X\times \mathcal{L}(X,Y)\,:\, \|T\|=\|x\|=\|Tx\|=1\}
$$
and define, for  $0<\eps<1$,
\begin{equation*}
\eta(X,Y)(\eps)=\inf\bigl\{1-\|Tx\|\,:\ x\in S_X,\, T\in \mathcal{L}(X,Y),\, \|T\|=1,\  \dist\bigl((x,T),\Pi(X,Y)\bigr)\geq\eps\bigr\},
\end{equation*}
where $\dist\bigl((x,T),\Pi(X,Y)\bigr)= \inf\bigl\{\max\{\|x-y\|,\|T-S\|\}\ :\ (y,S)\in \Pi(X,Y)\bigr\}$.
Equivalently, for every $\eps\in (0,1)$, $\eta(X,Y)(\eps)$ is the supremum of those $\xi\geq 0$ such that whenever $T\in \mathcal{L}(X,Y)$ with $\|T\|=1$ and $x\in S_X$ satisfy $\|Tx\|\geq 1-\xi$, then there exists $(y,S)\in \Pi(X,Y)$ with $\|T-S\|\leq \eps$ and $\|x-y\|\leq \eps$. It is clear that $(X,Y)$ has the BPBp if and only if $\eta(X,Y)(\eps)>0$ for all $0<\eps<1$.

Our first preliminary result deals with operators acting on an $L_1(\mu)$ space and shows that the proof of some results can be reduced to the case when $\mu$ is a positive finite Borel regular measure defined on a compact Hausdorff space.

\begin{proposition}\label{prop:reduction}
Let $Y$ be a Banach space. Suppose that there is a function $\eta: (0,1)\longrightarrow (0, \infty)$ such that
$$
\eta\bigl(L_1(m),Y\bigr)(\eps)\geq \eta(\eps)>0 \qquad (0<\eps<1)
$$
for every positive finite Borel regular measure $m$ defined on a compact Hausdorff space. Then, for every measure $\mu$, the pair $(L_1(\mu),Y)$ has the BPBp with $\eta\bigl(L_1(\mu), Y\bigr)\geq \eta$.

Moreover, if $Y=L_1(\nu)$ for an arbitrary measure $\nu$, then it is enough to show that
$$
\eta\bigl(L_1(m_1),L_1(m_2)\bigr)(\eps)\geq \eta(\eps)>0 \qquad (0<\eps<1)
$$
for all positive finite Borel regular measures $m_1$ and $m_2$ defined on Hausdorff compact spaces in order to get that $(L_1(\mu),L_1(\nu))$ has the BPBp with $\eta\bigl(L_1(\mu), L_1(\nu)\bigr)\geq \eta$.
\end{proposition}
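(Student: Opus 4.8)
The plan is to exploit the band construction recalled at the start of this section to reduce an arbitrary $(L_1(\mu),Y)$ to the ``nice'' measures covered by the hypothesis, and then to lift a norm-attaining pair back to $L_1(\mu)$ by means of the $L$-decomposition furnished by the canonical band projection. Fix $0<\eps<1$ and $0\leq \xi<\eta(\eps)$, and let $T\in\mathcal{L}(L_1(\mu),Y)$ with $\|T\|=1$ and $x\in S_{L_1(\mu)}$ satisfy $\|Tx\|\geq 1-\xi$; the goal is to produce $(y,S)\in\Pi(L_1(\mu),Y)$ with $\|x-y\|\leq\eps$ and $\|T-S\|\leq\eps$. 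First I would pick a sequence $(x_n)$ in $S_{L_1(\mu)}$ with $\|Tx_n\|\to\|T\|=1$, put $G=\{x\}\cup\{x_n:n\in\N\}$, and let $Z$ be the band generated by $G$ with canonical band projection $P\colon L_1(\mu)\to Z$. By the Kakutani representation theorem, $Z$ is isometrically (lattice) isomorphic to $L_1(m)$ for some positive finite Borel regular measure $m$ on a compact Hausdorff space. Since $\{x\}\cup\{x_n\}\subseteq Z$, we get $1\geq\|T|_Z\|\geq\sup_n\|Tx_n\|=1$, so $T|_Z$ has norm one, while $x\in S_Z$ satisfies $\|T|_Z x\|=\|Tx\|\geq 1-\xi$. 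As $\xi<\eta(\eps)\leq\eta(L_1(m),Y)(\eps)$, the hypothesis yields $(y,\widetilde S)\in\Pi(Z,Y)$ with $\|x-y\|\leq\eps$ and $\|T|_Z-\widetilde S\|\leq\eps$.

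The key step is to lift $\widetilde S$ to an operator on all of $L_1(\mu)$ that remains $\eps$-close to $T$, and here the $L$-structure is decisive. I would set $S:=\widetilde S\,P+T(\Id-P)$. Since $P$ is an $L$-projection, $\|f\|=\|Pf\|+\|(\Id-P)f\|$, the triangle inequality gives $\|Sf\|\leq\|\widetilde S(Pf)\|+\|T((\Id-P)f)\|\leq\|Pf\|+\|(\Id-P)f\|=\|f\|$, hence $\|S\|\leq 1$; because $y\in Z$ we have $Sy=\widetilde S y$ with $\|Sy\|=1$, so $\|S\|=1$ and $(y,S)\in\Pi(L_1(\mu),Y)$. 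For the distance, $(S-T)f=\widetilde S(Pf)-T(Pf)=(\widetilde S-T|_Z)(Pf)$, whence $\|(S-T)f\|\leq\eps\|Pf\|\leq\eps\|f\|$ and $\|S-T\|\leq\eps$; together with $\|x-y\|\leq\eps$ (the embedding $Z\hookrightarrow L_1(\mu)$ being isometric) this gives $\eta(L_1(\mu),Y)(\eps)\geq\eta(\eps)$. I expect this lifting to be the main point: it is precisely the $L$-decomposition that keeps $\|S\|=1$ and $\|S-T\|\leq\eps$ simultaneously, whereas a cruder extension such as $\widetilde S\,P$ alone would leave $\|S-T\|$ uncontrolled on the complementary band $(\Id-P)(L_1(\mu))$.

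For the moreover statement the only addition is a second, analogous band reduction on the target. Once $Z\cong L_1(m)$ has been formed, it is separable, so choosing a countable dense set $D\subseteq Z$ and letting $W$ be the band in $L_1(\nu)$ generated by the countable set $T|_Z(D)$, I obtain $W$ isometric to $L_1(m_2)$ for a positive finite Borel regular measure $m_2$ on a compact Hausdorff space, with $L$-projection $Q$. Since $T|_Z(D)\subseteq W$ and $W$ is closed, $T|_Z$ actually maps $Z$ into $W$; regarding it as $T'\in\mathcal{L}(Z,W)$ of norm one with $\|T'x\|\geq 1-\xi$, the restricted hypothesis (now only for pairs $(L_1(m_1),L_1(m_2))$ of nice measures) produces $(y,\widetilde S)\in\Pi(Z,W)$ that is $\eps$-close to $(x,T')$. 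Viewing $\widetilde S$ and $T'$ as operators into $L_1(\nu)$ (isometrically, as $W$ is a band) recovers exactly a pair $(y,\widetilde S)\in\Pi(Z,L_1(\nu))$ that is $\eps$-close to $(x,T|_Z)$, and the same lifting $S:=\widetilde S\,P+T(\Id-P)$ completes the argument as before.
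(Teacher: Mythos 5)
Your proposal is correct and follows essentially the same route as the paper: generate a band from the given point together with a norming sequence, apply the hypothesis to the restriction of $T$ to that band (and, for the \emph{moreover} part, to a second band in the target containing the image), and lift via $S=\widetilde S\,P+T(\Id-P)$ using the $L$-decomposition $\|f\|=\|Pf\|+\|(\Id-P)f\|$. You in fact spell out a couple of details the paper leaves implicit (why $\|T|_Z\|=1$, and why $T|_Z$ maps into the target band via a countable dense set), but the argument is the same.
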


\begin{proof}
Let $0<\eps<1$. Suppose that $T\in\mathcal{L}(L_1(\mu),Y)$ is a norm-one operator and $f_0\in S_X$ satisfy that $\|Tf_0\|>1-\eta(\eps)$. Let $\{f_n)_{n=1}^\infty$ be a sequence in $X$ such that $\|f_n\|\leq 1$ for all $n$ and $\lim_{n\to \infty} \|Tf_n\|=\|T\|=1$. The band $X_1$ generated  by $\{f_n\,:\, n\geq 0\}$ is isometric to $L_1(J, m)$ for a finite positive Borel regular measure $m$ defined on a compact Hausdorff space $J$ by the Kakutani representation theorem. Let $T_1$ be the restriction of $T$ to $X_1$. Then $\|T_1\|=1$ and $\|T_1 f_0\| > 1- \eta(\eps)$. By the assumption, there exist a norm-one operator $S_1: X_1 \longrightarrow Y$ and $g\in S_{X_1}$ such that $\|S_1g\|=1$, $\|T_1-S_1\|< \eps$ and $\|f-g\|< \eps$. Let $P$ denote the canonical band projection from $L_1(\mu)$ onto $X_1$. Then $S := S_1P + T(\Id-P)$ is a norm-one operator from $L_1(\mu)$ to $Y$, $g$ can be viewed as a norm-one element in $S_{L_1(\mu)}$ (just extending by $0$), $\|Sg\|=1$, $\|S- T\|< \eps$ and $\|f-g\|< \eps$. This completes the proof of the first part of the proposition.

In the case when $Y=L_1(\nu)$, we observe that the image $T(X_1)$ is also contained in a band $Y_1$ of $L_1(\nu)$ which, again, is isometric to $L_1(m_2)$ for a finite positive Borel regular measure $m_2$ on a compact Hausdorff space $J_2$. Now, we work with the restriction of $T$ to $X_1$ with values in $Y_1$, we follow the proof of the first part and finally we consider the operators $S$ there as an operator with values in $L_1(\nu)$ (just composing with the formal inclusion of $Y_1$ into $L_1(\nu)$).
\end{proof}

Since for every positive finite Borel regular measure $m$ defined on a compact Hausdorff space, $L_1(m)$ is isometric to $L_1(\mu)$ for a probability measure $\mu$, we get the following.

\begin{corollary}\label{cor:main1} Let $Y$ be a Banach space.
Suppose that there is a strictly positive function $\eta: (0,1)\longrightarrow (0, \infty)$ such that $\eta\bigl(L_1(\mu_1),Y\bigr) \geq \eta$ for every probability measure $\mu_1$. Then $(L_1(\mu),Y)$ has the BPBp for every measure $\mu$, with $\eta\bigl(L_1(\mu),Y\bigr)\geq \eta$.
\end{corollary}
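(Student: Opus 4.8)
The plan is to read off the corollary from Proposition~\ref{prop:reduction} together with two routine observations, so the work is almost entirely bookkeeping. By the first part of Proposition~\ref{prop:reduction}, it suffices to produce the bound
\[
\eta\bigl(L_1(m),Y\bigr)(\eps)\geq \eta(\eps)>0 \qquad (0<\eps<1)
\]
for every positive finite Borel regular measure $m$ on a compact Hausdorff space; once this holds the proposition propagates it to $\eta\bigl(L_1(\mu),Y\bigr)\geq\eta$ for an \emph{arbitrary} measure $\mu$. Thus the only thing I need to supply is the passage from such an $m$ to the hypothesis, which is stated only for probability measures.

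The first observation I would record is that the quantity $\eta(X,Y)$ is an isometric invariant of the domain space $X$: if $U\colon X_1\longrightarrow X_2$ is a surjective linear isometry, then $\eta(X_1,Y)=\eta(X_2,Y)$. Indeed, $U$ carries $S_{X_1}$ bijectively onto $S_{X_2}$ and induces the isometric bijection $T\mapsto TU$ from $\mathcal{L}(X_2,Y)$ onto $\mathcal{L}(X_1,Y)$; these two maps together send $\Pi(X_2,Y)$ onto $\Pi(X_1,Y)$ and preserve both operator norms and the distance function $\dist(\,\cdot\,,\Pi)$, so the infimum defining $\eta$ is unchanged. This is immediate from the formulation of $\eta$ in terms of $\Pi$ and $\dist$ recalled above.

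The second observation is the measure-theoretic normalization. If $c=m(\Omega)=0$ then $L_1(m)=\{0\}$ and there is nothing to prove, so assume $c>0$ and set $\mu_1=m/c$, a probability measure with the same null sets and the same underlying space of equivalence classes as $m$. The map $U\colon L_1(m)\longrightarrow L_1(\mu_1)$ given by $Uf=c\,f$ is a surjective linear isometry, since $\|Uf\|_{L_1(\mu_1)}=\int |c f|\,d\mu_1=c\int|f|\,d\mu_1=\int|f|\,dm=\|f\|_{L_1(m)}$. Hence $L_1(m)$ is isometrically isomorphic to $L_1(\mu_1)$ for the probability measure $\mu_1$.

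Combining the two observations with the hypothesis yields $\eta\bigl(L_1(m),Y\bigr)(\eps)=\eta\bigl(L_1(\mu_1),Y\bigr)(\eps)\geq\eta(\eps)$ for all $\eps\in(0,1)$, which is exactly the bound required to invoke Proposition~\ref{prop:reduction}, and the conclusion $\eta\bigl(L_1(\mu),Y\bigr)\geq\eta$ for every measure $\mu$ follows. I do not expect any genuine obstacle here: the entire content is that $\eta(\cdot,Y)$ sees only the isometric type of the domain and that a finite measure rescales to a probability measure. The only points deserving a line of verification are the isometric invariance of $\eta$ in the first observation and the surjectivity of the rescaling isometry $U$ in the second.
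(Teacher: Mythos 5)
Your proof is correct and follows exactly the paper's route: the paper derives this corollary from Proposition~\ref{prop:reduction} in one sentence by noting that every positive finite Borel regular measure $m$ rescales to a probability measure with $L_1(m)$ isometric to $L_1(m/m(\Omega))$, which is precisely your two observations spelled out. Nothing is missing.
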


Let us present the first application of the above results. For a $\sigma$-finite measure $\mu_1$, it is shown in \cite{CK2} that $(L_1(\mu_1), Y)$ has the BPBp if $Y$ has the Radon-Nikod\'ym property and $(\ell_1,Y)$ has the BPBp. By following the proof of  \cite[Theorem~2.2]{CK2}, we conclude that there is a strictly positive function $\eta_Y: (0,1)\longrightarrow (0,\infty)$ such that $\eta(L_1(\mu_1), Y)\geq  \eta_Y$ for every probability measure $\mu_1$. Therefore, the corollary above provides the same result without the assumption of $\sigma$-finiteness. We also recall that $L_q(\nu)$ is uniformly convex for all $1<q<\infty$ and for all measures $\nu$, so it has the Radon-Nikod\'ym property and $(\ell_1,L_q(\nu))$ has the BPBp \cite{AAGM2}. Hence we get the following.

\begin{corollary}\label{cor:L1Lq} Let $\mu$ be an arbitrary measure. If $Y$ is a Banach space with the Radon-Nikod\'ym property and such that $(\ell_1,Y)$ has the BPBp, then the pair $(L_1(\mu), Y)$ has the BPBp. In particular, $(L_1(\mu), L_q(\nu))$ has the BPBp for all $1<q<\infty$ and all arbitrary measures $\nu$.
\end{corollary}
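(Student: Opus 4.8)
The plan is to reduce the statement to the $\sigma$-finite case already available in the literature and then feed it into the reduction machinery of Corollary~\ref{cor:main1}.

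First I would invoke \cite[Theorem~2.2]{CK2}, which asserts that for any $\sigma$-finite measure $\mu_1$ the pair $(L_1(\mu_1),Y)$ has the BPBp provided $Y$ has the Radon-Nikod\'ym property and $(\ell_1,Y)$ has the BPBp. The decisive refinement I need is that the resulting function can be chosen \emph{independently} of the particular measure $\mu_1$. To obtain this I would reexamine the proof of that theorem and check that the quantitative estimate $\eta_Y(\eps)$ produced there is built only out of the modulus coming from the Radon-Nikod\'ym property of $Y$ and the BPBp function of the pair $(\ell_1,Y)$, together with $\eps$ itself; none of these data involve $\mu_1$, so one extracts a single strictly positive function $\eta_Y\colon(0,1)\lra(0,\infty)$ with $\eta\bigl(L_1(\mu_1),Y\bigr)(\eps)\geq\eta_Y(\eps)$ for \emph{every} probability measure $\mu_1$. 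I expect this uniformity to be the main obstacle: everything else is formal, whereas here one must be sure the constants in the cited argument do not secretly depend on the measure (for instance through countably many atoms or a density decomposition).

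Once $\eta_Y$ is in hand, the conclusion for arbitrary $\mu$ is immediate. Since every $L_1(m)$ associated with a positive finite Borel regular measure $m$ on a compact Hausdorff space is isometric to $L_1$ of a probability measure, the hypothesis of Corollary~\ref{cor:main1} is satisfied with this $\eta_Y$. Applying that corollary yields that $(L_1(\mu),Y)$ has the BPBp for \emph{every} measure $\mu$, with $\eta\bigl(L_1(\mu),Y\bigr)\geq\eta_Y$, which is the first assertion.

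For the ``in particular'' clause I would specialize $Y=L_q(\nu)$. Clarkson's inequalities give that $L_q(\nu)$ is uniformly convex for $1<q<\infty$ and every measure $\nu$; uniform convexity forces reflexivity and hence the Radon-Nikod\'ym property, while by \cite{AAGM2} the pair $(\ell_1,Z)$ enjoys the BPBp whenever $Z$ is uniformly convex. Thus $Y=L_q(\nu)$ meets both hypotheses of the first part, and the general statement applies verbatim to conclude that $(L_1(\mu),L_q(\nu))$ has the BPBp for all $1<q<\infty$ and all measures $\mu$ and $\nu$.
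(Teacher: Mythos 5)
Your proposal matches the paper's own argument: both reduce to Corollary~\ref{cor:main1} by extracting from the proof of \cite[Theorem~2.2]{CK2} a function $\eta_Y$ that is uniform over all probability measures, and both handle the ``in particular'' clause via uniform convexity of $L_q(\nu)$, the Radon-Nikod\'ym property, and the result of \cite{AAGM2} for $(\ell_1,Y)$ with $Y$ uniformly convex. You also correctly flag the uniformity of $\eta_Y$ in the measure as the only non-formal point, which is exactly the step the paper addresses by ``following the proof'' of the cited theorem.
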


We now deal with operators with values on an $\ell_\infty$-sum of Banach spaces, presenting the following result from \cite{ACKLM} which we will use in section~\ref{sec:L_1-L_infty}. Given a family $\{Y_j\, :\, j\in J\}$ of Banach spaces, we
denote by $\left[\bigoplus_{j\in J} Y_j\right]_{\ell_\infty}$ the
$\ell_\infty$-sum of the family.

\begin{proposition}[\mbox{\cite{ACKLM}}]\label{prop:ACKLM}
Let $X$ be a Banach space and let $\{Y_j\, :\, j\in J\}$ be a family of Banach spaces and let $Y=\left[\bigoplus_{j\in J} Y_j\right]_{\ell_\infty}$ denote their $\ell_\infty$-sum. If $\inf\limits_{j\in J} \eta(X,Y_j)(\eps)>0$ for all $0<\eps<1$, then
$\left(X, Y\right)$ has the BPBp with
$$
\eta(X,Y)= \inf\limits_{j\in J} \eta(X,Y_j).
$$
\end{proposition}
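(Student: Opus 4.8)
The plan is to identify an operator $T\in\mathcal{L}(X,Y)$ with the family $(T_j)_{j\in J}$ of its coordinates $T_j=\pi_j\circ T\in\mathcal{L}(X,Y_j)$, where $\pi_j$ denotes the canonical norm-one projection of $Y$ onto $Y_j$. Since $Y$ carries the $\ell_\infty$-sum norm, $\|Tx\|=\sup_{j\in J}\|T_jx\|$ for every $x\in X$ and hence $\|T\|=\sup_{j\in J}\|T_j\|$; conversely, any family $(S_j)_j$ with $\sup_j\|S_j\|<\infty$ assembles into an operator in $\mathcal{L}(X,Y)$. Writing $\eta(\eps)=\inf_{j\in J}\eta(X,Y_j)(\eps)$, which is positive by hypothesis, I would establish the equality $\eta(X,Y)(\eps)=\eta(\eps)$ by proving the two inequalities separately, the substantive one being $\eta(X,Y)(\eps)\geq\eta(\eps)$, as this is what yields the BPBp for $(X,Y)$.

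For that lower bound, fix $0<\eps<1$, take $T\in\mathcal{L}(X,Y)$ with $\|T\|=1$ and $x_0\in S_X$ with $\|Tx_0\|>1-\eta(\eps)$. Since $\sup_j\|T_jx_0\|=\|Tx_0\|>1-\eta(\eps)$, there is an index $j_0$ with $\|T_{j_0}x_0\|>1-\eta(\eps)\geq 1-\eta(X,Y_{j_0})(\eps)$. The coordinate $T_{j_0}$ need not have norm one, so I would pass to $\widetilde T_{j_0}=T_{j_0}/\|T_{j_0}\|$, which still satisfies $\|\widetilde T_{j_0}x_0\|\geq\|T_{j_0}x_0\|>1-\eta(X,Y_{j_0})(\eps)$. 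Applying the hypothesis (the BPBp for $(X,Y_{j_0})$, quantified by $\eta(X,Y_{j_0})$) yields a pair $(y_0,S_{j_0})\in\Pi(X,Y_{j_0})$ with $\|x_0-y_0\|\leq\eps$ and $\|\widetilde T_{j_0}-S_{j_0}\|\leq\eps$. I would then reassemble $S\in\mathcal{L}(X,Y)$ by putting $S_{j_0}$ in the $j_0$-th coordinate and keeping $S_j=T_j$ for $j\neq j_0$.

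It remains to verify that $(y_0,S)\in\Pi(X,Y)$ and that $S$ is $\eps$-close to $T$. Since $\|S_{j_0}\|=1$ while $\|S_j\|=\|T_j\|\leq 1$ for $j\neq j_0$, one gets $\|S\|=1$; and because $\|Sy_0\|\geq\|S_{j_0}y_0\|=1$ whereas $\|Sy_0\|\leq\|S\|\,\|y_0\|=1$, the pair $(y_0,S)$ lies in $\Pi(X,Y)$, with $\|x_0-y_0\|\leq\eps$ already granted. As $S$ and $T$ agree off the coordinate $j_0$, we have $\|T-S\|=\|T_{j_0}-S_{j_0}\|$, and this is the main obstacle: the BPBp was applied to $\widetilde T_{j_0}$, not to $T_{j_0}$, so a normalization term $\|T_{j_0}-\widetilde T_{j_0}\|=1-\|T_{j_0}\|$ enters through the triangle inequality. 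The point is that $j_0$ was chosen with $\|T_{j_0}\|\geq\|T_{j_0}x_0\|>1-\eta(\eps)$, so this term is governed by the very threshold $\eta(\eps)$ that also controls the defect $\xi=1-\|Tx_0\|$; the delicate part of the argument is to organize the estimates (in the choice of $j_0$, and if necessary by invoking the coordinate BPBp with a slightly reduced parameter) so that the normalization does not spoil the bound $\|T_{j_0}-S_{j_0}\|\leq\eps$.

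Finally, for the reverse inequality $\eta(X,Y)(\eps)\leq\eta(X,Y_j)(\eps)$, I would use that each $Y_j$ sits inside $Y$ as a norm-one complemented subspace via the canonical isometric embedding $\iota_j$. Given a norm-one $U\colon X\to Y_j$ with $\|Ux_0\|$ close to $1$, embed it as $T=\iota_j\circ U\in\mathcal{L}(X,Y)$ (zero in all other coordinates), apply the BPBp for $(X,Y)$ to obtain $(y_0,S)\in\Pi(X,Y)$ with $\|T-S\|\leq\eps$ and $\|x_0-y_0\|\leq\eps$, and observe that for $k\neq j$ the coordinate $S_k$ satisfies $\|S_k\|=\|S_k-T_k\|\leq\eps<1$; hence $\sup_{k\neq j}\|S_ky_0\|\leq\eps<1$, so the value $\|Sy_0\|=1$ is forced in the $j$-th coordinate, giving $\|S_jy_0\|=\|S_j\|=1$. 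Restricting to that coordinate returns a pair in $\Pi(X,Y_j)$ at distance at most $\eps$ from $(x_0,U)$, and taking the infimum over $j$ yields $\eta(X,Y)(\eps)\leq\eta(\eps)$, completing the identification of the modulus.
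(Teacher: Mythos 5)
Your architecture (coordinatewise decomposition $T\leftrightarrow(T_j)_j$, surgery on a single coordinate $j_0$, reassembly, and the reverse inequality via the canonical embeddings $\iota_j$) is the natural one, and the reverse inequality $\eta(X,Y)(\eps)\leq\inf_j\eta(X,Y_j)(\eps)$ is argued correctly. The problem is precisely the step you flag and then leave unresolved, and it is a genuine gap, not a bookkeeping issue. With the modulus as defined in this paper, $\eta(X,Y_{j_0})$ can only be invoked for operators of norm exactly one, while the coordinate $T_{j_0}=\pi_{j_0}\circ T$ in general has $\|T_{j_0}\|<1$; after normalizing, the best you get is
$\|S_{j_0}-T_{j_0}\|\leq\|S_{j_0}-\widetilde{T}_{j_0}\|+(1-\|T_{j_0}\|)\leq\eps+\xi$,
where $\xi=1-\|Tx_0\|$ may be anything up to $\inf_j\eta(X,Y_j)(\eps)$. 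Neither of your proposed repairs closes this. The choice of $j_0$ cannot help: all the hypothesis yields is $\|T_{j_0}\|\geq\|T_{j_0}x_0\|>1-\xi$, and there need not exist any index with $\|T_jx_0\|$ large and $\|T_j\|=1$ simultaneously. Invoking the coordinate BPBp with the reduced parameter $\eps'=\eps-(1-\|T_{j_0}\|)$ would require $1-\|\widetilde{T}_{j_0}x_0\|<\eta(X,Y_{j_0})(\eps')$; but $\eta(X,Y_{j_0})(\cdot)$ is nondecreasing, so $\eta(X,Y_{j_0})(\eps')\leq\eta(X,Y_{j_0})(\eps)$ and the available estimate $1-\|T_{j_0}x_0\|<\eta(X,Y_{j_0})(\eps)$ points the wrong way. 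Running your scheme with $\xi<\min\{\eps/2,\,\inf_j\eta(X,Y_j)(\eps/2)\}$ does prove that $(X,Y)$ has the BPBp, but only with the weaker estimate $\eta(X,Y)(\eps)\geq\min\{\eps/2,\,\inf_j\eta(X,Y_j)(\eps/2)\}$, which is not the formula asserted.

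The missing ingredient is a version of the coordinate hypothesis that accepts operators of norm \emph{at most} one, so that $T_{j_0}$ can be fed in directly and the term $1-\|T_{j_0}\|$ never arises; this is how the $\ell_\infty$-sum argument is run in \cite{ACKLM}, and with it your assembly gives exactly $\|T-S\|=\|T_{j_0}-S_{j_0}\|\leq\eps$ and hence the stated equality. Note that such a ``$\|R\|\leq 1$'' reformulation with the \emph{same} modulus is itself not a consequence of the normalization trick you use (it reproduces the same additive error), so it must either be built into the definition of the modulus or established separately; as written, your proof establishes strictly less than the proposition claims.
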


We will use this result for operators with values in $L_\infty(\nu)$. To present the result, we fist recall that given a localizable measure $\nu$, we have the following representation
\begin{equation}\label{eq:repre_L_infty}
L_\infty(\nu) \equiv \left[ \bigoplus\nolimits_{j\in J} Y_j \right]_{\ell_\infty},
\end{equation}
where each space $Y_j$ is either $1$-dimensional or of the form $L_\infty([0,1]^\Lambda)$ for some finite or infinite set $\Lambda$ and $[0,1]^\Lambda$ is endowed with the product measure of the Lebesgue measures. We refer to \cite{Lac} for its background. With this in mind, the following corollary follows from the proposition above.

\begin{corollary}\label{cor:to_L_infty-localizable}
Let $X$ be a Banach space. Suppose that there is a strictly positive function $\eta:(0,1)\longrightarrow (0,\infty)$ such that
$$
\eta\bigl(X,L_\infty([0,1]^\Lambda) \bigr)(\eps)\geq \eta(\eps) \qquad (0<\eps<1)
$$
for every finite or infinite set $\Lambda$. Then the pair $(X,L_\infty(\nu))$ has the BPBp for every localizable measure $\nu$ with
$$
\eta\bigl(X,L_\infty(\nu)\bigr)(\eps)\geq \min\{\eta(\eps),\eps^2/2\} \qquad (0<\eps<1).
$$
\end{corollary}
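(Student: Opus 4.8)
The plan is to deduce the corollary directly from Proposition~\ref{prop:ACKLM} applied to the canonical decomposition \eqref{eq:repre_L_infty} of $L_\infty(\nu)$. Writing $L_\infty(\nu)\equiv\left[\bigoplus_{j\in J}Y_j\right]_{\ell_\infty}$, where each summand $Y_j$ is either one-dimensional or of the form $L_\infty([0,1]^\Lambda)$ for some set $\Lambda$, it suffices to bound $\eta(X,Y_j)(\eps)$ from below uniformly in $j$; then Proposition~\ref{prop:ACKLM} both gives the BPBp of $(X,L_\infty(\nu))$ and identifies $\eta(X,L_\infty(\nu))=\inf_{j\in J}\eta(X,Y_j)$, from which the announced estimate is read off.

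First I would treat the two types of summand separately. For a summand $Y_j=L_\infty([0,1]^\Lambda)$ the hypothesis gives directly $\eta(X,Y_j)(\eps)\geq\eta(\eps)$ for every $\eps\in(0,1)$. For a one-dimensional summand, $Y_j$ is isometrically the scalar field $\K$, so the relevant modulus is $\eta(X,\K)(\eps)$, which is controlled by the Bishop-Phelps-Bollob\'as theorem itself. Concretely, given $x^*\in S_{X^*}$ and $x\in S_X$ with $\abs{x^*(x)}\geq 1-\xi$, one multiplies $x^*$ by a unimodular scalar $\lambda$ so that $\lambda x^*(x)=\abs{x^*(x)}$ is real and nonnegative; applying the (sharp, real-part form of the) Bishop-Phelps-Bollob\'as theorem to the pair $(x,\lambda x^*)$ produces $y\in S_X$ together with a norm-one functional attaining its norm at $y$, within $\eps$ of $x$ and of $\lambda x^*$ respectively, as soon as $\xi<\eps^2/2$; undoing the rotation yields a pair in $\Pi(X,\K)$ within $\eps$ of $(x,x^*)$. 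This gives $\eta(X,\K)(\eps)\geq\eps^2/2$.

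Combining the two cases, $\inf_{j\in J}\eta(X,Y_j)(\eps)\geq\min\{\eta(\eps),\eps^2/2\}$, which is strictly positive for every $\eps\in(0,1)$ because $\eta$ is strictly positive. Hence the hypothesis of Proposition~\ref{prop:ACKLM} is met, and that proposition yields both that $(X,L_\infty(\nu))$ has the BPBp and the estimate $\eta(X,L_\infty(\nu))(\eps)\geq\min\{\eta(\eps),\eps^2/2\}$.

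The reduction through Proposition~\ref{prop:ACKLM} and the verification that a one-dimensional $L_\infty$ summand is isometrically $\K$ are routine; the one place needing care is the tracking of the scalar constant. The main point I anticipate is making the scalar estimate sharp enough to recover $\eps^2/2$ rather than a weaker constant: the rotation reduces the modulus condition $\abs{x^*(x)}\geq 1-\xi$ to the real-part condition $\re \lambda x^*(x)\geq 1-\xi$, so one must invoke the real-part formulation of the Bishop-Phelps-Bollob\'as theorem (whose admissible gap is $\eps^2/2$) rather than the modulus formulation literally stated in Theorem~\ref{thm:BPBTheorem} (which would only yield $\eps^2/4$). In the real case the rotation is trivial and the same estimate holds verbatim.
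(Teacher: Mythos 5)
Your proposal is correct and follows exactly the paper's route: the paper's proof is the one-line observation that the result follows from Proposition~\ref{prop:ACKLM}, the decomposition \eqref{eq:repre_L_infty}, and the Bishop--Phelps--Bollob\'as theorem applied to the one-dimensional summands. Your extra remark about needing the sharp real-part form of the theorem (admissible gap $\eps^2/2$) rather than the $\eps^2/4$ version literally stated in Theorem~\ref{thm:BPBTheorem} is a legitimate point of care that the paper glosses over.
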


The proof is just an application of Proposition~\ref{prop:ACKLM}, the representation formula given in \eqref{eq:repre_L_infty} and the Bishop-Phelps-Bollob\'{a}s theorem (Theorem~\ref{thm:BPBTheorem}).

Let us comment that the analogue of Proposition~\ref{prop:ACKLM} is false for $\ell_1$-sums in the domain space (see \cite{ACKLM}), so Proposition~\ref{prop:reduction} cannot be derived directly from the decomposition of $L_1(\mu)$ spaces analogous to \eqref{eq:repre_L_infty}.

Before finishing this section, we state the following lemma of \cite{AAGM2} which we will frequently use afterwards.

\begin{lemma}[\mbox{\cite[Lemma~3.3]{AAGM2}}]\label{elementary}
Let $\{c_n\}$ be a sequence of complex numbers with $|c_n|\leq 1$ for every $n$, and let $\eta>0$ be such that for a convex series $\sum \alpha_n$, $\re \sum_{n=1}^\infty \alpha_n c_n >1-\eta$. Then for every $0<r<1$, the set $A : = \{ i \in \mathbb{N} : \re c_i > r \}$, satisfies the estimate
\[ \sum_{i\in A} \alpha_i \geq 1-\frac{\eta}{1-r}.\]
\end{lemma}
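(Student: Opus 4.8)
The plan is to prove this by directly splitting the series into the indices lying in $A$ and those lying outside $A$, and feeding in the two available pointwise bounds on $\re c_i$. Recall first that ``convex series $\sum\alpha_n$'' means $\alpha_n\geq 0$ for every $n$ and $\sum_n\alpha_n=1$; in particular $\sum_n\alpha_n c_n$ converges absolutely because $|c_n|\leq 1$, so all the manipulations below make sense.

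First I would set $s:=\sum_{i\in A}\alpha_i$, so that $\sum_{i\notin A}\alpha_i=1-s$, and rewrite the hypothesis as
\[
1-\eta < \re\sum_{n=1}^\infty \alpha_n c_n = \sum_{i\in A}\alpha_i\,\re c_i + \sum_{i\notin A}\alpha_i\,\re c_i.
\]
There are then exactly two estimates to insert. On $A$ I would use only the trivial bound $\re c_i\leq |c_i|\leq 1$, which gives $\sum_{i\in A}\alpha_i\,\re c_i\leq s$. Off $A$, by the very definition of $A$ one has $\re c_i\leq r$ (the value $\re c_i$ may well be negative there, but since $\alpha_i\geq 0$ this only helps the inequality), so $\sum_{i\notin A}\alpha_i\,\re c_i\leq r(1-s)$.

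Combining these two bounds yields $1-\eta < s + r(1-s) = r + s(1-r)$, and since $0<r<1$ I may divide by $1-r>0$ to obtain $s > 1-\frac{\eta}{1-r}$, which is in fact slightly stronger than the asserted $\geq$. The only thing requiring care is to apply the sharp bound $\re c_i\leq 1$ on $A$ and the defining bound $\re c_i\leq r$ on its complement in the correct directions, together with the nonnegativity of the weights $\alpha_i$; beyond that there is no genuine obstacle, as the statement is an elementary convexity-type estimate.
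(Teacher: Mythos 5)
Your proof is correct: the decomposition over $A$ and its complement, together with the bounds $\re c_i\leq 1$ on $A$ and $\re c_i\leq r$ off $A$, yields $1-\eta<r+s(1-r)$ and hence $s>1-\frac{\eta}{1-r}$, which is even slightly stronger than the stated inequality. The paper itself gives no proof (it cites the lemma from \cite{AAGM2}), but your argument is exactly the standard one used there, so nothing further is needed.
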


\section{The Bishop-Phelps-Bollob\'as property of $(L_1(\mu),L_1(\nu))$}\label{sec:L_1-L_1}

Our goal in this section is to prove the following result.

\begin{theorem}\label{thm:main0}
Let $\mu$ and $\nu$ be arbitrary measures. Then the pair $(L_1(\mu), L_1(\nu))$ has the BPBp. Moreover, there exists a strictly positive function $\eta:(0,1)\longrightarrow (0,\infty)$ such that
$$
\eta\bigl(L_1(\mu),L_1(\nu)\bigr)(\eps)\geq \eta(\eps) \qquad (0<\eps<1).
$$
\end{theorem}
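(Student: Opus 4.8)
The plan is to lean entirely on the reduction already in place. By the \emph{moreover} part of Proposition~\ref{prop:reduction} it suffices to exhibit one strictly positive function $\eta\colon(0,1)\to(0,\infty)$ with $\eta\bigl(L_1(m_1),L_1(m_2)\bigr)(\eps)\geq\eta(\eps)$ for all finite positive Borel regular measures $m_1,m_2$ on compact Hausdorff spaces; since each such $L_1(m)$ is isometric to $L_1$ of a probability measure, I may assume $\mu$ and $\nu$ are probabilities. The essential point, and the source of all the difficulty, is that $\eta$ must be \emph{quantitative and uniform}: it may depend on $\eps$ alone, not on the two measures.

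Fix $T$ with $\|T\|=1$ and $f_0\in S_{L_1(\mu)}$ with $\|Tf_0\|>1-\eta(\eps)$. The basic structural fact about an $L_1$ domain is that the norm is computed on normalized characteristic functions, $\|T\|=\sup_{\mu(A)>0}\|T\chi_A\|/\mu(A)$. Approximating $f_0$ by a simple function $\sum_i c_i\chi_{A_i}$ over a finite measurable partition (the error absorbed into $\eps$) and estimating
\[
\|Tf_0\| = \Bigl\|\sum_i c_i\,T\chi_{A_i}\Bigr\| \leq \sum_i |c_i|\,\|T\chi_{A_i}\| \leq \sum_i |c_i|\,\mu(A_i)\,\|T\| = 1,
\]
I would extract two things from the near-equality. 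First, Lemma~\ref{elementary}, applied to the convex series with weights $|c_i|\mu(A_i)$ and coefficients $\|T\chi_{A_i}\|/\mu(A_i)\leq 1$, shows that $f_0$ places almost all of its mass on pieces $A_i$ that \emph{almost attain the norm}, i.e.\ with $\|T\chi_{A_i}\|/\mu(A_i)$ close to $1$. Second, near-equality in the triangle inequality in $L_1(\nu)$ forces the functions $c_i\,T\chi_{A_i}$ to be almost aligned in phase $\nu$-a.e.\ on the bulk of their mass, so that essentially no cancellation occurs.

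With the good pieces identified, let $g$ denote $f_0$ restricted to their union and renormalized in $L_1(\mu)$; by the concentration just established, $\|g-f_0\|<\eps$. The alignment guarantees that $\|Tg\|$ is within a function of $\eps$ of $1$, so $g$ is an almost-norm-attaining direction for $T$ built directly from $f_0$ (in particular, unlike the uniformly convex case, $g$ is \emph{not} a positive bump but inherits the phases of $f_0$). This $g$ is the point that will be moved to the final $f$, with only a small further correction, controlled by $\eps$, once $S$ is in place.

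Finally I would correct $T$ on the good pieces only, realigning and rescaling the images $T\chi_{A_i}$ there — which are already nearly aligned and of average norm close to $1$ — so that the image of $f$ has norm exactly equal to the (renormalized) operator norm, producing $S$ with $\|Sf\|=\|S\|=1$, $\|S-T\|<\eps$ and $\|f-f_0\|<\eps$. The main obstacle is precisely this final coupling. Because $L_1(\mu)$ has no extreme points in the non-atomic case, the norm-attaining direction must be manufactured as a genuine $L_1$ function rather than read off a vertex, and one must \emph{simultaneously} force $\|Sf\|=\|S\|$ exactly while keeping the perturbation below $\eps$ and \emph{not} raising $\|S\|$ above $1$ after renormalization; the interplay between the concentration estimate, the no-cancellation estimate, and this norm bookkeeping is the delicate part. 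Equally important is that every constant produced along the way be a function of $\eps$ alone, with no dependence on $m_1,m_2$, since this uniformity is exactly what the reduction of the first paragraph consumes.
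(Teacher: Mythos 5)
Your reduction to probability measures via Proposition~\ref{prop:reduction} and your use of Lemma~\ref{elementary} to concentrate the mass of $f_0$ on pieces $A_i$ with $\|T\chi_{A_i}\|/\mu(A_i)$ close to $1$ are both correct and coincide with the opening moves of the paper's argument. The genuine gap is in the final step, which is where the whole theorem lives: ``realigning and rescaling the images $T\chi_{A_i}$ on the good pieces'' is not a construction, and the two obstructions you yourself flag cannot be overcome at the level of the finitely many pieces $A_i$. First, to keep $\|S\|\leq 1$ you must control $\|S\chi_B\|/\mu(B)$ for \emph{every} measurable $B\subset A_i$, not just for $B=A_i$; a rescaling that normalizes the average $\|S\chi_{A_i}\|/\mu(A_i)$ to $1$ can push the supremum over sub-pieces above $1$. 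Second, even if each $\|S\chi_{A_i}\|/\mu(A_i)=1$ and the images $S\chi_{A_i}$ are mutually aligned, equality in $\|Sf\|=\sum_i\beta_i\|S(\chi_{A_i}/\mu(A_i))\|$ requires $S$ to attain its norm at each $\chi_{A_i}/\mu(A_i)$ individually, and that in turn requires eliminating cancellation \emph{inside} each piece, between the images of the ``point masses'' at different $x\in A_i$ --- a phenomenon invisible to any argument that only sees the finitely many vectors $T\chi_{A_i}$.

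The paper resolves both points with a tool your proposal never invokes: Iwanik's representation of $\mathcal{L}(L_1(m_1),L_1(m_2))$ by measures on $J_1\times J_2$ with $\|T_\mu\|=\nor{d|\mu|^1/dm_1}_\infty$. This converts the norm constraint into a \emph{pointwise} condition on the marginal density, which Lemma~\ref{lem:complex1} normalizes to be identically $1$ on the support of the new $f_1$ (by intersecting the $B_j$ with the set where the density exceeds $1-\eps/8$ and dividing by it), and it makes the intra-piece alignment expressible as $g(y)h(x,y)\approx 1$ on a subset $C$ of the product space, where $h$ is the polar phase of the kernel; Lemma~\ref{lem:complex2} then discards the part of the kernel living off $C$ and rephases and renormalizes the rest. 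Without this representation, or some substitute for it, your plan stalls exactly at the point you identify as delicate, so the proposal as written does not constitute a proof.
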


By Proposition~\ref{prop:reduction}, it is enough to get the result for finite regular positive Borel measures defined on compact Hausdorff spaces. Therefore, Theorem~\ref{thm:main0} follows directly from the next result.

\begin{theorem}\label{thm:main1}
Let $m_1$ and $m_2$ be finite regular positive Borel measures on compact Hausdorff spaces $J_1$ and $J_2$, respectively.
Let $0<\eps<1$ and suppose that $T\in \mathcal{L}(L_1(m_1), L_1(m_2))$ with $\|T\|=1$ and $f_0\in S_{L_1(m_1)}$ satisfy $\|Tf_0\|>1-\frac{\eps^{18}}{5^32^{27}}$. Then there are $S\in S_{L(L_1(m_1), L_1(m_2))}$ and $g\in S_{L_1(m_1)}$ such that
\[
\|Sg\|=1,\quad \|f-g\|< 4\eps\quad \text{and} \quad \|T-S\|< 4\sqrt\eps.
\]
\end{theorem}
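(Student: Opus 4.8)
The plan is to pass to the adjoint and reduce everything to the familiar Bishop--Phelps--Bollob\'as geometry of the $L_1$--$L_\infty$ duality. Since multiplication by a unimodular function is a surjective isometry of $L_1(m_1)$, I may assume $f_0\geq 0$. Set $h=\overline{\sign(Tf_0)}\in S_{L_\infty(m_2)}$, which is unimodular and satisfies $\inner{Tf_0,h}=\norm{Tf_0}>1-\eta$ with $\eta=\eps^{18}/(5^32^{27})$. Writing $\phi=T^*h\in B_{L_\infty(m_1)}$, the near-norming hypothesis becomes $\re\inner{f_0,\phi}=\re\inner{Tf_0,h}>1-\eta$; that is, $\int_{J_1} f_0\,(1-\re\phi)\,dm_1<\eta$, where $f_0\geq0$, $\int f_0\,dm_1=1$ and $\abs{\phi}\leq1$ a.e.

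Next I would extract a good set. From $\int f_0(1-\re\phi)\,dm_1<\eta$, via Lemma~\ref{elementary} (or directly by Chebyshev's inequality applied to the probability measure $f_0\,dm_1$), one gets a measurable set $G\subseteq\{f_0>0\}$ on which $\re\phi>1-\sqrt\eta$ and with $\int_{G}f_0\,dm_1>1-\sqrt\eta$. On $G$ one has $\abs{1-\phi}\lesssim\eta^{1/4}$, so $\phi$ is essentially the constant $1$ there; moreover every normalized column $\chi_A/m_1(A)$ with $A\subseteq G$ obeys $\norm{T(\chi_A/m_1(A))}\geq\re\frac{1}{m_1(A)}\int_A\phi\,dm_1>1-\sqrt\eta$, so $T$ is already close to norm-attaining on the band over $G$. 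Renormalizing, $g_1:=f_0\chi_G/\norm{f_0\chi_G}$ lies within $O(\sqrt\eta)$ of $f_0$ in $L_1(m_1)$ and satisfies $\re\inner{Tg_1,h}>1-O(\eta^{1/4})$.

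Then I would isolate the problem in a band to control the norm cleanly. Let $P$ be the canonical band projection of $L_1(m_1)$ onto $L_1(m_1|_G)$. For any contraction $V$ on $L_1(m_1|_G)$ the operator $S:=VP+T(\Id-P)$ satisfies, using $\norm{Pf}+\norm{(\Id-P)f}=\norm f$, the estimate $\norm{Sf}\leq\norm{VPf}+\norm{T(\Id-P)f}\leq\norm{Pf}+\norm{(\Id-P)f}=\norm f$, so $\norm S\leq1$, and $S$ attains its norm at exactly the points where $V$ does. This reduces the theorem to the sub-band problem: starting from the contraction $W:=T|_{L_1(m_1|_G)}$, which has $\norm W\geq1-\sqrt\eta$ and nearly norms $h$ at $g_1$, produce a contraction $V$ with $\norm V=1$ attaining its norm at some $g_0$ with $\norm{g_0-g_1}$ of order $\eps$ and $\norm{V-W}$ of order $\sqrt\eps$; extending $g_0$ by zero and setting $S=VP+T(\Id-P)$ then yields the statement after combining with the $O(\sqrt\eta)$ estimate of the previous step.

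The hard part is the sub-band construction, and the obstruction is purely one of \emph{exact} norm control. The goal is to arrange $V^*h\equiv1$ on (most of) $G$ with $g_0$ supported there, for then $\inner{Vg_0,h}=1$ forces $\norm{Vg_0}=\norm V=1$. A naive additive rank-one correction $W+\inner{\cdot,(1-\phi)\chi_G}\,u$ with $\inner{u,h}=1$ does push $V^*h$ up to $1$ on $G$, but it simultaneously raises $\norm V$ to $1+O(\eta^{1/4})$, and the obvious shrink-and-add compensation merely reintroduces a defect of the same order, so the scheme stalls; rescaling to norm one destroys the exact attainment. Overcoming this---boosting $W$ to exact norm one while keeping it a contraction \emph{and} attaining---is the main difficulty, and I expect it to require a carefully balanced iterative limiting construction in which the residual defect is killed geometrically along a nested sequence of good subsets of $G$, with the correction operators and the displacements of the norming point forming summable series. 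The balancing of correction sizes against the movement of $g$ is precisely what should produce the asymmetric bounds $\norm{f_0-g}<4\eps$ and $\norm{T-S}<4\sqrt\eps$ and force the small threshold $\eta=\eps^{18}/(5^32^{27})$.
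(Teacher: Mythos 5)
There is a genuine gap: you have correctly located the crux of the theorem --- upgrading an operator that \emph{almost} attains its norm to one that \emph{exactly} attains it, without losing the equality $\norm S=1$ --- but you do not solve it. Your last paragraph explicitly concedes this, observing that the additive rank-one correction fails and then deferring to an unspecified ``carefully balanced iterative limiting construction.'' That construction is the theorem; without it the proposal is a reduction, not a proof. Moreover, the reduction itself is set up in a way that makes the crux harder to reach: by passing to the adjoint and working only with $\phi=T^*h\in L_\infty(m_1)$, you retain only the marginal information $\re\phi>1-\sqrt\eta$ on a good set $G$, which tells you that the columns $T(\chi_A/m_1(A))$, $A\subseteq G$, have norm close to $1$, but gives you no handle on \emph{where in $J_2$} the mass of $T(\chi_A/m_1(A))$ goes, which is exactly what must be aligned to achieve $\inner{Vg_0,h}=1$ with $\norm V=1$.

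The missing idea in the paper is Iwanik's representation theorem: every $T\in\mathcal L(L_1(m_1),L_1(m_2))$ is $T_\mu$ for a measure $\mu$ on $J_1\times J_2$ whose marginals are absolutely continuous with respect to $m_1$, $m_2$, and crucially $\norm{T_\mu}=\bigl\|\tfrac{d|\mu|^1}{dm_1}\bigr\|_\infty$. This converts the operator-norm constraint into a pointwise constraint on a density, so the ``exact norm control'' obstruction is dissolved by a \emph{multiplicative} correction rather than an additive or iterative one: in a first step one replaces $d\mu$ by $\bigl(\tfrac{d|\mu|^1}{dm_1}\bigr)^{-1}d\mu$ on a good set (where the density already exceeds $1-\eps/8$), forcing the density to equal $1$ exactly on the support of a slightly moved $f_1$ while perturbing $T$ by less than $\eps$; in a second step one splits $\nu=\nu_c+\nu_f$ according to whether $g(y)h(x,y)$ is within $\sqrt\eps/2^{3/2}$ of $1$ (here $d\nu=h\,d|\nu|$ is the polar decomposition), discards $\nu_f$ over a further good set, rotates $\nu_c$ by $\overline{\tilde g(y)}\,\overline{h(x,y)}$ and renormalizes by $\bigl(\tfrac{d|\nu_c|^1}{dm_1}\bigr)^{-1}$, which yields $\inner{T_{\tilde\nu}\tilde f,\tilde g}=1$ exactly, with $\norm{T_{\tilde\nu}}=1$ again read off from the marginal density. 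No iteration or limiting argument is needed. Your band-projection formula $S=VP+T(\Id-P)$ is sound and is indeed how the paper reduces from general $\mu$ to finite Borel measures (Proposition~\ref{prop:reduction}), but it does not substitute for the two-variable measure representation on which the whole normalization argument rests.
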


Prior to presenting the proof of this theorem, we have to recall the following representation result for operators from $L_1(m_1)$ into $L_1(m_2)$. As we announced in the introduction, we deal with only complex spaces, being the real case easily deductible, even easier, from the proof of the complex case.

Let $m_1$ and $m_2$ be finite regular positive Borel measures on compact Hausdorff spaces $J_1$ and $J_2$, respectively. For a complex-valued Borel measure $\mu$ on the product space $J_1\times J_2$, we define their marginal measures $\mu^i$ on $J_i$ ($i=1,2$) as follows:
\[
\mu^1(A) = \mu(A\times J_2) \quad \text{  and  } \quad \mu^2(B)= \mu(J_1\times B),
\]
where $A$ and $B$ are Borel measurable subsets of $J_1$ and $J_2$, respectively.

Let $M(m_1, m_2)$ be the complex Banach lattice consisting of all complex-valued Borel measures $\mu$ on the product space $J_1\times J_2$ such that each $|\mu|^i$ is absolutely continuous with respect to $m_i$ for $i=1,2$ with the norm  \[\nor{ \frac{d|\mu|^1}{dm_1}}_\infty.\] It is clear that to each $\mu\in M(m_1, m_2)$ there corresponds a unique bounded linear operator $T_\mu\in L(L_1(m_1), L_1(m_2))$ defined by
 \[ \inner{T_\mu (f), g}  = \int_{J_1\times J_2} f(x)g(y) \, d\mu(x,y),\]
 where $f\in L_1(m_1)$ and $g\in L_\infty(m_2)$.
Iwanik \cite{Iwa} showed that the mapping $\mu\longmapsto T_\mu$ is a surjective lattice isomorphism and
\[ \|T_\mu\| = \nor{ \frac{d|\mu|^1}{dm_1}}_\infty.\]
Even though he showed this for the real case, it can be easily generalized to the complex case. For details, see \cite[Theorem~1]{Iwa} and  \cite[IV Theorem 1.5 (ii), Corollary 2]{Schaefer}.

Since the proof of Theorem~\ref{thm:main1} is complicated, we divide it into the following two lemmas.

\begin{lemma}\label{lem:complex1}
Let $0<\eps<1$. Suppose that $T_\mu$ is an element of $\mathcal{L}(L_1(m_1), L_1(m_2))$ with $\|T_\mu\|=1$ for some $\mu\in M(m_1, m_2)$ and that $f_0\in S_{L_1(m_1)}$ is a nonnegative simple function such that $\|T_\mu f_0 \|> 1-\frac{\eps^3}{2^6}$. Then there are a norm-one bounded linear operator $T_\nu$ for some $\nu\in M(m_1, m_2)$ and a nonnegative simple function $f_1$ in $S_{L_1(m_1)}$ such that
\[ \norm{T_\mu - T_\nu}<\eps,\ \ \  \|f_1- f_0\|< 3\eps\]
and we have, for all $x\in {\rm supp}( f_1)$,
\[\frac{d|\nu|^1}{dm_1}(x)=1.\]
\end{lemma}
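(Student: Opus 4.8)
The plan is to abandon the operators and work entirely with the representing measures, exploiting the lattice isometry $\mu\mapsto T_\mu$ and the fact that both the norm of an operator and of a perturbation are read off from the \emph{first marginal} of the associated measure. Write $h:=\frac{d|\mu|^1}{dm_1}$, so that $\|T_\mu\|=\|h\|_\infty=1$ and $0\le h\le 1$ $m_1$-a.e. The first step I would take is to record the a priori estimate
\[
\|T_\mu f_0\|=\sup_{\|g\|_\infty\le 1}\Bigl|\int_{J_1\times J_2}f_0(x)\,g(y)\,d\mu(x,y)\Bigr|\le\int_{J_1}f_0(x)\,d|\mu|^1(x)=\int_{J_1}f_0\,h\,dm_1,
\]
where the inequality uses only that the integrand $|f_0(x)|$ depends on $x$ alone, so integrating against $|\mu|$ collapses onto its first marginal. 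Since $\int f_0\,dm_1=1$, the hypothesis $\|T_\mu f_0\|>1-\eps^3/2^6$ then yields the concentration bound $\int f_0(1-h)\,dm_1<\eps^3/2^6$.

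Next I would isolate the set $G:=\{x\in J_1:\ h(x)>1-\eps/2\}$ where $h$ is already essentially $1$. On $G^c$ one has $1-h\ge\eps/2$, so a Chebyshev estimate (a special case of Lemma~\ref{elementary}) gives $\int_{G^c}f_0\,dm_1<\frac{2}{\eps}\cdot\frac{\eps^3}{2^6}=\eps^2/32$. I would then set $f_1:=\tfrac1c\,f_0\chi_G$ with $c:=\int_G f_0\,dm_1=1-\int_{G^c}f_0\,dm_1\ge 1-\eps^2/32>0$; this is again a nonnegative simple function in $S_{L_1(m_1)}$, supported inside $G$, and a direct computation gives $\|f_1-f_0\|=2(1-c)\le\eps^2/16<3\eps$. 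For the operator, the key device is to \emph{rescale $\mu$ by a bounded function of the first variable only}: put $w:=1/h$ on $G$ (legitimate since $h>1-\eps/2>0$ there) and $w:=1$ on $G^c$, and define $d\nu(x,y):=w(x)\,d\mu(x,y)$.

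Because $w$ is bounded and nonnegative, $|\nu|=|w|\,|\mu|$, whence $\frac{d|\nu|^1}{dm_1}=wh$, which equals $1$ on $G$ and $h$ on $G^c$; in particular it lies in $[0,1]$ with supremum $1$, so $\|T_\nu\|=1$ and $\frac{d|\nu|^1}{dm_1}\equiv 1$ on $G\supseteq\operatorname{supp}(f_1)$. The perturbation is controlled the same way: $d(\mu-\nu)(x,y)=(1-w(x))\,d\mu(x,y)$ is supported on $G\times J_2$ and satisfies $\frac{d|\mu-\nu|^1}{dm_1}=(1-h)\chi_G$, so that $\|T_\mu-T_\nu\|=\bigl\|(1-h)\chi_G\bigr\|_\infty\le\eps/2<\eps$. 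Collecting these three facts gives exactly the conclusion.

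The hard part will not be any deep analysis but the measure-theoretic bookkeeping needed to keep $\nu$ inside $M(m_1,m_2)$ and to make the pointwise conclusion legitimate. This is precisely why the construction multiplies $\mu$ by a function of $x$ alone: boundedness of $w$ forces $|\nu|^2\le\|w\|_\infty\,|\mu|^2\ll m_2$, so the crucial absolute continuity of the \emph{second} marginal is inherited from $\mu$ for free, whereas a more symmetric modification would jeopardize it. The remaining subtlety is that the statement "$\frac{d|\nu|^1}{dm_1}(x)=1$ for all $x\in\operatorname{supp}(f_1)$" must be read against a fixed representative of the density $h$; I would fix such a representative at the outset, define $G$ and $w$ from it, and observe that $\operatorname{supp}(f_1)\subseteq G$ by construction, so the equality genuinely holds on the support of the simple function $f_1$.
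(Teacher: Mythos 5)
Your argument is correct, and it rests on the same central device as the paper's proof: fix a representative $h$ of $d|\mu|^1/dm_1$, pass to the set where $h$ is close to $1$, replace $\mu$ by $h^{-1}\,d\mu$ there (leaving it untouched elsewhere), and restrict and renormalize $f_0$ to that set. Where you differ is in the bookkeeping. The paper keeps the cell structure of the simple function $f_0=\sum_j\alpha_j\chi_{B_j}/m_1(B_j)$: it first selects the ``good'' cells $J$ via Lemma~\ref{elementary} applied to a norming functional $g_0$, then trims each good cell to $B_j\cap D$ with $D=\{h>1-\eps/8\}$, and renormalizes cell by cell, which is what produces the $3\eps$ bound on $\|f_1-f_0\|$. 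You instead bound $\|T_\mu f_0\|$ by $\int f_0\,h\,dm_1$ directly (the same marginal estimate the paper uses), apply a one-line Chebyshev inequality to the single set $G=\{h>1-\eps/2\}$, and renormalize $f_0\chi_G$ globally. This bypasses both the norming functional and Lemma~\ref{elementary}, and it yields strictly sharper constants ($\|f_1-f_0\|\le\eps^2/16$ and $\|T_\mu-T_\nu\|\le\eps/2$, versus $3\eps$ and $\eps$), at no cost: $f_0\chi_G$ is still simple, $c=\int_G f_0\,dm_1>0$ guarantees $m_1(G)>0$ so that $\|T_\nu\|=1$, and boundedness of $w$ preserves membership in $M(m_1,m_2)$, all of which you correctly flag. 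The only thing the paper's cell-wise formulation buys is that $f_1$ retains the explicit shape $\sum\beta_j\chi_{\tilde B_j}/m_1(\tilde B_j)$ used notationally in Lemma~\ref{lem:complex2}, but that lemma only needs a nonnegative simple function with the density condition, so nothing is lost.
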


\begin{proof}
Let $f_0 = \sum_{j=1}^n \alpha_j \frac{\chi_{B_j}}{m_1(B_j)}$, where $\{B_j\}_{j=1}^n$ are mutually disjoint Borel subsets of $J_1$, $\alpha_j\geq 0$ and  $m_1(B_j)>0$ for all $1\leq j\leq n$, and $\sum_{j=1}^n \alpha_j=1$.  Let $D=\{ x\in J_1 : \frac{d|\mu|^1}{dm_1}(x) > 1-\frac{\eps}8\}$. It is clear that $m_1(D)>0$. Since $\|T_\mu f_0 \|> 1-\frac{\eps^3}{2^6}$, there is $g_0\in S_{L_\infty(m_2)}$ such that
\[\re\inner{T_\mu f_0, g_0} > 1-\frac{\eps^3}{2^6}.\]
Let
\[ J = \left\{ j\in\{1,\ldots,n\}\, :\, \frac{1}{m_1(B_j)} \int_{B_j} \frac{d|\mu|^1}{dm_1}(x) \,dm_1(x) > 1-\frac{\eps^2}{2^6}\right\}.
\]
Then we have
\[\sum_{j\in J} \alpha_j \geq 1-\eps>0.\]
Indeed, since
\begin{align*}
1-\frac{\eps^3}{2^6} &< \re\inner{T_\mu f_0, g_0} = \re \int_{J_1\times J_2} f_0(x) g_0(y)\, d\mu(x,y)\\
&\leq \int_{J_1\times J_2}  |f_0(x)| \, d|\mu|(x,y) = \int_{J_1} f_0(x)\, d|\mu|^1(x)\\
&=\sum_{j=1}^n \alpha_j \frac{1}{m_1(B_j)} \int_{B_j} d|\mu|^1(x)=\sum_{j=1}^n \alpha_j \frac{1}{m_1(B_j)} \int_{B_j} \frac{d|\mu|^1}{dm_1}(x)\, dm_1(x),
\end{align*} we have $\sum_{j\in J} \alpha_j \geq 1-\eps>0$ by Lemma~\ref{elementary}.
Note also that for each $j\in J$,
\begin{align*}
1-\frac{\eps^2}{2^6}  &< \frac{1}{m_1(B_j)} \int_{B_j} \frac{d|\mu|^1}{dm_1}(x)\, dm_1(x) \\
&= \frac{1}{m_1(B_j)} \int_{B_j\cap D} \frac{d|\mu|^1}{dm_1}(x)\, dm_1(x) + \frac{1}{m_1(B_j)} \int_{B_j\setminus D} \frac{d|\mu|^1}{dm_1}(x)\, dm_1(x) \\
&\leq \frac{m_1(B_j\cap D)}{m_1(B_j)} + \left( 1-\frac{\eps}8\right) \frac{m_1(B_j\setminus D)}{m_1(B_j)}\\
&=1-\frac{\eps}8 \frac{m_1(B_j\setminus D)}{m_1(B_j)}.
\end{align*} Hence we deduce that, for all $j\in J$,
\[\frac{m_1(B_j\setminus D)}{m_1(B_j)}\leq \frac{\eps}8.\]
Let $\tilde{B_j}=B_j\cap D$ and $\beta_j =\frac{ \alpha_j }{\sum_{j\in J} \alpha_j}$ for all $j\in J$ and define \[ f_1 = \sum_{j\in J} \beta_j \frac{\chi_{\tilde{B_j}}}{m_1(\tilde{B_j} )}.\]
It is clear that $f_1$ is a nonnegative element in $S_{L_1(m_1)}$ and
\begin{align*}
\|f_0 - f_1\| &\leq \nor{ \sum_{j\in J} \alpha_j \frac{\chi_{B_j}}{m_1(B_j)} - \sum_{j\in J} \beta_j \frac{\chi_{\tilde{B_j}}}{m_1(\tilde{B_j} )} } + \nor{ \sum_{j \in \{1, \dots, n\} \setminus J} \alpha_j \frac{\chi_{B_j}}{m_1(B_j)}  }\\
&\leq \nor{ \sum_{j\in J} \alpha_j \left( \frac{\chi_{B_j}}{m_1(B_j)} - \frac{\chi_{\tilde{B_j}}}{m_1(\tilde{B_j} )} \right) } + \nor{ \sum_{j\in J} (\alpha_j-\beta_j) \frac{\chi_{\tilde{B_j}}}{m_1(\tilde{B_j} )}  } +  \sum_{j \in \{1, \dots, n\} \setminus J} \alpha_j \\
&< \nor{ \sum_{j\in J} \alpha_j \left( \frac{\chi_{B_j}}{m_1(B_j)} - \frac{\chi_{\tilde{B_j}}}{m_1(\tilde{B_j} )} \right) } + \sum_{j\in J} |\alpha_j-\beta_j|  +\eps\\
&\leq  \nor{ \sum_{j\in J} \alpha_j \left( \frac{\chi_{B_j}}{m_1(B_j)} - \frac{\chi_{\tilde{B_j}}}{m_1(B_j)} \right) } + \nor{ \sum_{j\in J} \alpha_j \left( \frac{\chi_{\tilde{B_j}}}{m_1(B_j)} - \frac{\chi_{\tilde{B_j}}}{m_1(\tilde{B_j} )} \right) } + \left|1-\sum_{j\in J} \alpha_j\right| + \eps\\
&\leq 2 \sum_{j\in J} \alpha_j  \frac{m_1(B_j\setminus D)}{m_1(B_j)} + 2\eps\leq 3\eps.\\
\end{align*}
Define
\[d\nu(x,y) = \sum_{j\in J} \chi_{\tilde{B_j}}(x) \left(\frac{d|\mu|^1}{dm_1}(x)\right)^{-1} d\mu(x, y) + \chi_{(J_1\setminus \tilde{B})}\,d\mu(x,y),\] where $\tilde{B} = \bigcup_{j\in J} \tilde{B_j}$. It is clear that  $\frac{d|\nu|^1}{dm_1}(x) =1$ on $\tilde{B}$ and $\frac{d|\nu|^1}{dm_1}(x)\leq 1$ elsewhere. Note also that  for all $x\in J_1$,
\begin{align*}
\frac{d|\nu-\mu|^1}{dm_1}(x) &= \sum_{j\in J} \chi_{\tilde{B_j}}(x) \left(\left(\frac{d|\mu|^1}{dm_1}(x)\right)^{-1} -1\right) \frac{d|\mu|^1}{dm_1}\\
& \leq \frac{1}{1-\eps/8}-1 =\frac{\eps}{8-\eps}< \eps.
\end{align*}
Hence $T_\nu$ is a norm-one operator such that $\norm{T_\mu - T_\nu}<\eps$, $\|f_1- f_0\|< 3\eps$ and $\frac{d|\nu|^1}{dm_1}(x)=1$ for all $x\in {\rm supp}( f_1)$.
\end{proof}

\begin{lemma}\label{lem:complex2}
Let $0<\eps<1$. Suppose that $T_\nu$ is a norm-one operator in  $\mathcal{L}(L_1(m_1), L_1(m_2))$ and that $f$ is a nonnegative norm-one simple function in $S_{L_1(m_1)}$ satisfying $\|T_\nu f\| > 1-\frac{\eps^6}{2^7}$ and $\frac{d|\nu|^1}{dm_1}(x)=1$ for all $x$ in the support of $f$. Then there are a nonnegative simple function $\tilde{f}$ in $S_{L_1(m_1)}$ and a norm-one operator $T_{\tilde\nu}$ in $\mathcal{L}(L_1(m_1), L_1(m_2))$ such that
\[
\|T_{\tilde\nu}\tilde f\|=1,\quad \|T_\nu - T_{\tilde\nu}\|< 3\sqrt\eps\quad \text{and}\quad \|f-\tilde f\|< 3\eps.
\]
\end{lemma}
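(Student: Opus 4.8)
The plan is to exploit the measure-theoretic description of both the norm and of norm attainment. Writing $\nu=h\,|\nu|$ for the polar decomposition (so $|h|=1$ $|\nu|$-a.e.)\ and disintegrating the finite positive measure $|\nu|$ over its first marginal as $|\nu|=\int_{J_1}\sigma_x\,d|\nu|^1(x)$, with each $\sigma_x$ a probability measure on $J_2$, one checks that $T_\nu f$ is exactly the $m_2$-density of the second marginal $(f\nu)^2$, so that $\|T_\nu f\|=|(f\nu)^2|(J_2)$. Since $\frac{d|\nu|^1}{dm_1}=1$ on $\operatorname{supp}(f)$, we have $\|T_\nu f\|\le\int f\,d|\nu|^1=1$, with equality precisely when no cancellation occurs in the projection onto $J_2$; as $f\ge 0$, this happens exactly when the phase $h(x,y)$ is, over $\operatorname{supp}(f)\times J_2$, a function of $y$ alone. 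The whole difficulty is thus to replace $\nu$ by a nearby $\tilde\nu$ whose phase over the (slightly shrunk) support of $f$ depends only on $y$, while keeping a perturbation small in the demanding norm of $M(m_1,m_2)$, namely $\sup_x$ of the fiberwise total variation.

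First I would extract a near-optimal $g_0\in S_{L_\infty(m_2)}$ with $\re\langle T_\nu f,g_0\rangle>1-\delta$, where $\delta=\eps^6/2^7$, and set $c(x)=\int_{J_2}g_0(y)h(x,y)\,d\sigma_x(y)$, so that $\re\int_{J_1}f\,c\,dm_1>1-\delta$ and $\re c\le|c|\le 1$. The key mechanism is a two-stage application of the averaging principle of Lemma~\ref{elementary} (equivalently, Markov's inequality). In the $x$-variable it produces a measurable set $A\subseteq\operatorname{supp}(f)$ with $\int_A f\,dm_1$ close to $1$ on which $1-\re c(x)$ lies below a threshold $a\sim\eps^5$; for each such $x$ the average of $1-\re(g_0h)$ against $\sigma_x$ is then below $a$, so a second application in the $y$-variable yields a good set $G_x=\{y:\re(g_0(y)h(x,y))>1-b\}$, with $b\sim\eps$, satisfying $\sigma_x(G_x^{c})\le a/b$ \emph{uniformly} for $x\in A$. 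This uniform-in-$x$ fiber control, rather than a mere integral bound, is exactly what the sup-norm of the perturbation demands, and securing it is the main obstacle; it is precisely what forces the two separate averaging steps.

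With these sets in hand I would put $\tilde f=f\chi_A/\|f\chi_A\|_1$ (a nonnegative simple function, since $A$ is measurable) and define $\tilde\nu$ by leaving $\nu$ unchanged off $A\times J_2$ and, for $x\in A$, replacing the fiber $h(x,\cdot)\sigma_x$ by $\theta(\cdot)\,\sigma_x(G_x)^{-1}\sigma_x|_{G_x}$, where $\theta(y)=\overline{g_0(y)}/|g_0(y)|$ is unimodular (well defined since $|g_0|>1-b>0$ on each $G_x$). Because this keeps each fiber a probability measure, $|\tilde\nu|^1=m_1$ on $A$ and $\le 1$ elsewhere, whence $\|T_{\tilde\nu}\|=1$; and since the phase of $\tilde\nu$ over $A$ is the function $\theta(y)$ of $y$ alone while $\tilde f\ge 0$, the marginal $(\tilde f\tilde\nu)^2$ equals $\theta\,d\rho$ for a positive measure $\rho$, so that $\|T_{\tilde\nu}\tilde f\|=\rho(J_2)=\int\tilde f\,dm_1=1$; that is, the norm is attained.

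It then remains to check the two estimates. For the function, $\|f-\tilde f\|_1\le 2\bigl(1-\int_A f\,dm_1\bigr)\le 2\delta/a<3\eps$. For the operator, on each fiber $x\in A$ the difference $\theta\,\sigma_x(G_x)^{-1}\sigma_x|_{G_x}-h\sigma_x$ has total variation at most $\max_{G_x}|\theta-h|+2\sigma_x(G_x^{c})$; using $\re(g_0h)>1-b$, $|g_0|\le1$ and $|h|=1$ one gets $|1-g_0h|\le\sqrt{2b}$ and $|g_0|>1-b$, hence $|\theta-h|\le(\sqrt{2b}+b)/(1-b)$, and combined with $\sigma_x(G_x^{c})\le a/b$ this bounds $\|T_\nu-T_{\tilde\nu}\|$ by a quantity of order $\sqrt{2b}\sim\sqrt\eps$. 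Choosing the thresholds $b\sim\eps$ and $a\sim\eps^5$ (consistent with the hypothesis $\delta=\eps^6/2^7$) makes both bounds fall under $3\eps$ and $3\sqrt\eps$ respectively, which finishes the proof. The disintegration of $|\nu|$ is legitimate since, after the reduction to bands, $J_1$ and $J_2$ may be taken metrizable, or directly because the measures are Radon.
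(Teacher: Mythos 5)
Your proposal is correct and follows essentially the same route as the paper's proof: a near‑optimal $g_0$, a two‑stage application of Lemma~\ref{elementary}/Markov to get uniform fiberwise control of the ``bad'' set where $\re(g_0h)\leq 1-b$, a shrinking of the support of $f$, and a new measure obtained by discarding the bad part of each fiber, renormalizing, and rotating the phase to $\overline{g_0}/|g_0|$. The only differences are presentational --- you express the fiberwise surgery via a disintegration $|\nu|=\int\sigma_x\,d|\nu|^1(x)$ where the paper works directly with the restrictions $\nu_c=\nu|_C$, $\nu_f=\nu|_{C^c}$ and the marginal densities $d|\nu_c|^1/dm_1$, $d|\nu_f|^1/dm_1$ (thereby avoiding any appeal to a disintegration theorem), and your thresholds, left as $a\sim\eps^5$ and $b\sim\eps$, can indeed be fixed (e.g.\ $b=\eps/2^4$) consistently with $\delta=\eps^6/2^7$ so that both final estimates hold.
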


\begin{proof}
Let $f= \sum_{j=1}^n \beta_j \frac{\chi_{B_j}}{m_1(B_j)},$
where  $\{B_j\}_{j=1}^n$ are mutually disjoint Borel subsets of $J_1$, $\beta_j\geq 0$ and $m_1(B_j)>0$ for all $1\leq j\leq n$, and $\sum_{j=1}^n\beta_j=1$. Since $\|T_\nu f\| >1-\frac{\eps^6}{2^7}$ , there is  $g\in S_{L_\infty(m_2)}$ such that
\[
 1-\frac{\eps^6}{2^7}< \re \inner{ T_\nu f, g} = \sum_{j=1}^n \beta_j \re \int_{J_1\times J_2} \frac{\chi_{B_j}(x)}{m_1(B_j)}  (g(y))\, d\nu(x,y).
\]
Let $\displaystyle J=\left \{ j\in\{1,\ldots,n\}\, :\,\re \int_{J_1\times J_2} \frac{\chi_{B_j}(x)}{m_1(B_j)}g(y)\, d\nu(x,y) > 1-\frac{\eps^3}{2^6}\right\}$. From Lemma~\ref{elementary} it follows that
\[\sum_{j\in J}\beta_j >1-\frac{\eps^3}{2}.\]
Let $f_1 = \sum_{j\in J} \tilde \beta_j \frac{\chi_{B_j}}{m_1(B_j)}$, where $\tilde \beta_j = \beta_j / (\sum_{j\in J}\beta_j )$ for all $j\in J$. Then
 \[ \|f_1 - f\| \leq \nor{ \sum_{j\in J} (\tilde \beta_j - \beta_j) \frac{\chi_{B_j}}{m_1(B_j)} } + \sum_{j\in J} \beta_j \leq \eps^3< \eps.\]
Note that there is a Borel measurable function $h$ on $J_1\times J_2$ such that $d\nu(x,y) = h(x,y)\, d|\nu|(x,y)$ and $|h(x,y)|=1$ for all $(x,y)\in J_1\times J_2$. Let
\[ C= \left\{ (x,y) : |g(y)h(x,y)-1| < \frac{\sqrt\eps}{2^{3/2}} \right\}.\]
Define  two measures $\nu_{f}$ and $\nu_{c}$ as follows:
\begin{equation*}
\nu_f(A) = \nu(A\setminus C) \quad \text{and} \quad
\nu_c(A) = \nu(A\cap C)
\end{equation*}
for every Borel subset $A$ of $J_1\times J_2$. It is clear that
$$
d\nu= d\nu_{f} + d\nu_{c},\quad d|\nu_f|  =\bar  hd\nu_f,\quad  d|\nu_c| =\bar  hd\nu_c, \quad \text{and}\quad  d|\nu| = d|\nu_f|+d|\nu_c|.
$$
Since $\frac{d|\nu|^1}{dm_1}(x)=1$ for all $x\in \bigcup_{j=1}^n B_j$, we have
\[
1=\frac{d|\nu|^1}{dm_1}(x)  = \frac{d|\nu_{f}|^1}{dm_1}(x)+ \frac{d|\nu_{c}|^1}{dm_1}(x)
\]
for all $x\in B=\bigcup_{j=1}^n B_j$,
and we deduce that $|\nu|^1(B_j)=m_1(B_j)$ for all $1\leq j\leq n$.

We claim that $\frac{|\nu_{f}|^1(B_j) }{m_1(B_j)} \leq \frac{\eps^2}{2^2}$ for all $j\in J$. Indeed, if $|g(y)h(x,y)-1|\geq  \frac{\sqrt\eps}{2^{3/2}}$, then $\re (g(y)h(x,y))\leq 1-\frac{\eps}{2^4}$. So we have
\begin{align*}
1-\frac{\eps^3}{2^6} &\leq \frac{1}{m_1(B_j)}\re \int_{J_1\times J_2} \chi_{B_j(x)} g(y)\, d\nu(x,y)\\
&= \frac{1}{m_1(B_j)} \int_{J_1\times J_2} \chi_{B_j(x)} \re\big(g(y)h(x,y)\big)  \, d|\nu|(x,y)\\
&= \frac{1}{m_1(B_j)} \int_{J_1\times J_2} \chi_{B_j(x)} \re\big(g(y)h(x,y)\big)  \, d|\nu_f|(x,y) \\
&\ \ \ \ \ +  \frac{1}{m_1(B_j)} \int_{J_1\times J_2} \chi_{B_j(x)} \re\big(g(y)h(x,y)\big)  \, d|\nu_c|(x,y) \\
&\leq \frac{1}{m_1(B_j)} \left((1-\frac{\eps}{2^4}) |\nu_f|^1(B_j) +|\nu_c|^1(B_j)\right) \\
&= 1 - \frac{\eps}{2^4} \frac{|\nu_{f}|^1(B_j)}{m_1(B_j)}.
\end{align*} This proves our claim.

We also claim that for each $j\in J$, there exists a Borel subset $\tilde B_j$ of $B_j$  such that
\[ \left(1-\frac{\eps}2\right) m_1(B_j) \leq m_1(\tilde B_j) \leq m_1(B_j)\] and
\[ \frac{d|\nu_{f}|^1}{dm_1}(x)\leq \frac{\eps}2 \] for all $x\in \tilde B_j$.
Indeed, set $\tilde B_j = B_j\cap \left\{ x \in J_1 : \frac{d|\nu_{f}|^1}{dm_1}(x)\leq\frac {\eps}2\right\}$. Then
\[ \int_{B_j \setminus \tilde B_j} \frac{\eps}2 \, dm_1(x) \leq  \int_{B_j} \frac{d|\nu_{f}|^1}{dm_1}(x) \, dm_1(x) = |\nu_{f}^1|(B_j)\leq \frac{\eps^2}{2^2}m_1(B_j). \] This shows that $m_1(B_j \setminus \tilde B_j) \leq \frac{\eps}2 m_1(B_j)$. This proves our second claim.

Now, we define $\tilde{g}$ by $\tilde g(y) = \frac{g(y)}{|g(y)|}$ if $g(y)\neq 0$ and $\tilde g(y) = 1$ if $g(y)=0$, and we write $\tilde f = \sum_{j\in J} \tilde\beta_j \frac{\chi_{\tilde B_j} }{m_1(\tilde B_j) }$. Finally, we define the measure
\[
d\tilde \nu(x,y) = \sum_{j\in J} \chi_{\tilde B_j}(x)  \overline{\tilde g(y)} \overline{ h(x, y)} d\nu_c(x,y) \left( \frac{d|\nu_{c}|^1}{dm_1}(x) \right)^{-1} + \chi_{J_1\setminus \tilde B}(x) d\nu(x,y),
\]
where $\tilde B= \bigcup_{j\in J} \tilde B_j$. It is easy to see that $\frac{d|\tilde\nu|^1}{dm_1}(x) =1$ on $\tilde B$ and $\frac{d|\tilde\nu|^1}{dm_1}(x)\leq 1$ elsewhere. Note that
\begin{align*}
d(\tilde \nu- \nu)(x,y) &= \sum_{j\in J} \chi_{\tilde B_j}(x) \left[ \overline{\tilde g(y)} \overline{ h(x, y)}\left( \frac{d|\nu_{c}|^1}{dm_1}(x) \right)^{-1} -1\right] d\nu_c(x,y)   \\
&\ \ \ \ - \sum_{j\in J} \chi_{\tilde B_j}(x) d\nu_f(x,y).
\end{align*}
If $(x,y)\in C$, then $|g(y)|\geq 1-\frac{\sqrt{\eps}}{2^{3/2}} \geq 1-\frac{1}{2^{3/2}}$ and
\begin{align*}
 \left| \overline{\tilde g(y)} \overline{ h(x, y)}-1 \right| &=  \left|  \frac{g(y)}{|g(y)|}  h(x, y)-1  \right| \\
 & \leq \frac{ \left|  g(y) h(x, y)-1  \right| }{|g(y)|} + \frac{\big|1-|g(y)|\big|}{|g(y)|} \\
 &\leq 2 \frac{ \left|  g(y) h(x, y)-1  \right| }{|g(y)|} \leq 2\frac{\sqrt{\eps}}{2^{3/2}}\frac{2^{3/2}}{2^{3/2}-1}\leq 2\sqrt{\eps}.
\end{align*}
Hence, for all $(x,y)\in C$ we have
\begin{align*}
 \left| \overline{\tilde g(y)} \overline{ h(x, y)}\left( \frac{d|\nu_{c}|^1}{dm_1}(x) \right)^{-1} -1\right|  &\leq
 \left| \overline{\tilde g(y)} \overline{ h(x, y)}-1 \right|~ \left( \frac{d|\nu_{c}|^1}{dm_1}(x) \right)^{-1}  +   \left| \left( \frac{d|\nu_{c}|^1}{dm_1}(x) \right)^{-1} -1 \right|\\
 &\leq   2\sqrt{\eps}~ \left( \frac{d|\nu_{c}|^1}{dm_1}(x) \right)^{-1} +\left| \left( \frac{d|\nu_{c}|^1}{dm_1}(x) \right)^{-1} -1 \right|.
\end{align*}
So, we have for all $x\in J_1$,
\begin{align*}
\frac{d| \tilde \nu - \nu|^1}{dm_1}(x) &\leq \sum_{j\in J} \chi_{\tilde B_j}(x) \left[  2\sqrt{\eps}~ \left( \frac{d|\nu_{c}|^1}{dm_1}(x) \right)^{-1} +\left| \left( \frac{d|\nu_{c}|^1}{dm_1}(x) \right)^{-1} -1 \right|
   \right] \frac{d|\nu_c|^1}{dm_1}(x) \\
&\ \ \ \ + \sum_{j\in J} \chi_{\tilde B_j}(x) \frac{d|\nu_f|^1}{dm_1}(x)\\
&\leq \sum_{j\in J} \chi_{\tilde B_j}(x) \left( 2\sqrt{\eps} +   \left( 1-\frac{d|\nu_{c}|^1}{dm_1}(x)   \right)\right) + \sum_{j\in J} \chi_{\tilde B_j}(x) \left( \frac{d|\nu_{f}|^1}{dm_1}(x) \right)\\
&\leq 2\sqrt{\eps}+\eps <3\sqrt{\eps}.
\end{align*}
This gives that $\|T_\nu - T_{\tilde \nu} \|< 3\sqrt\eps$. Note also that, for all $j\in J$,
\begin{align*}
 \inner{ T_{\tilde \nu} \frac{\chi_{\tilde B_j}}{m_1(\tilde B_j)}, \tilde g} &= \int_{J_1\times J_2} \frac{\chi_{\tilde B_j}(x)}{m_1(\tilde B_j)} \tilde  g(y) \,d\tilde \nu(x,y)\\
 & =\int_{J_1\times J_2} \frac{\chi_{\tilde B_j}(x)}{m_1(\tilde B_j)} \overline{h(x,y)}\left( \frac{d|\nu_{c}|^1}{dm_1}(x) \right)^{-1} \,\, d\nu_c(x,y)\\
 &=\int_{J_1} \frac{\chi_{\tilde B_j}(x)}{m_1(\tilde B_j)}\left( \frac{d|\nu_{c}|^1}{dm_1}(x) \right)^{-1} \,  d|\nu_c|^1(x) \\
 &= \int_{J_1} \frac{\chi_{\tilde B_j}(x)}{m_1(\tilde B_j)} \, dm_1(x)=1.
 \end{align*}
Hence we get $\inner{T_{\tilde \nu} \tilde f, \tilde g} =1$, which implies that $\|T_{\tilde \nu} \tilde f\|=\|T_{\tilde \nu}\|= 1$. Finally,
\begin{align*}
\|\tilde f - f\| &\leq \|\tilde f - f_1 \| + \|f_1 - f\|\\
& = \nor{ \sum_{j\in J} \tilde \beta_j \frac{\chi_{\tilde B_j}}{m_1(\tilde B_j)} - \sum_{j\in J} \tilde \beta_j \frac{\chi_{B_j}}{m_1(B_j)}}  +\eps \\
& \leq \sum_{j\in J} \tilde \beta_j \left(\nor{ \frac{\chi_{\tilde B_j}}{m_1(\tilde B_j)} -  \frac{\chi_{ B_j}}{m_1(\tilde B_j)} } +\nor{ \frac{\chi_{ B_j}}{m_1(\tilde B_j)} - \frac{\chi_{B_j}}{m_1(B_j)}}\right) +\eps \\
&=2\sum_{j\in J}\tilde \beta_j \frac{ m_1(B_j\setminus \tilde B_j)}{m_1(\tilde B_j)} +\eps\\
&\leq 2\sum_{j\in J}\tilde \beta_j \frac{ \frac{\eps}2 m_1(B_j)}{m_1(\tilde B_j)} +\eps\leq  \frac{\eps}{1-\eps/2}+\eps < 3\eps. \qedhere
\end{align*}
\end{proof}

We are now ready to prove Theorem~\ref{thm:main1}.

\begin{proof}[Proof of Theorem~\ref{thm:main1}]
Let $0<\eps<1$. Suppose that $T$ is a norm-one element in $\mathcal{L}(L_1(m_1), L_1(m_2))$ and there is $f\in S_{L_1(m_1)}$ such that $\|Tf\|>1-\frac{\eps^{18}}{5^32^{27}}$. Then there is an isometric isomorphism $\psi$ from $L_1(m_1)$ onto itself such that $\psi(f)=|f|$. Using $T\circ \psi^{-1}$ instead of $T$, we may assume that $f$ is nonnegative. Since simple functions are dense in $L_1(m_1)$, we can choose a nonnegative simple function $f_0\in S_{L_1(m_1)}$ arbitrarily close to $f$ so that
$$
\|Tf_0\| >  1-\frac{\eps^{18}}{5^32^{27}} = 1-\frac{\eps_1^3}{2^6},
$$
where $\eps_1 = \frac{\eps^6}{5 \cdot2^7}$. By Lemma~\ref{lem:complex1}, there exist a norm-one bounded linear operator $T_\nu$ for some $\nu\in M(m_1, m_2)$ and a nonnegative simple function $f_1$ in $S_{L_1(M_1)}$ such that $\norm{T - T_\nu}< \eps_1$, $\|f_1- f\|<3\eps_1$ and $\frac{d|\nu|^1}{dm_1}(x)=1$ for all $x\in {\rm supp}( f_1)$. Then
\[
\|T_\nu f_1\| \geq \|Tf\| - \|Tf - T_\nu f\| - \|T_\nu(f-f_1)\|\geq 1-\frac{\eps_1^3}{2^6} - \eps_1-3\eps_1\geq 1-5\eps_1=1-\frac{\eps^6}{2^7}.
\]
Now, by Lemma~\ref{lem:complex2}, there exist a nonnegative simple function $\tilde{f}$ and an operator $T_{\tilde\nu}$ in $\mathcal{L}(L_1(m_1), L_1(m_2))$ such that $\|T_{\tilde\nu}\tilde f\|=\|T_{\tilde\nu}\|=1$, $\|T_\nu - T_{\tilde\nu}\|\leq 3\sqrt\eps$ and $\|f_1-\tilde f\|\leq 3\eps$. Therefore, $\|T-T_{\tilde \nu}\| < 4\sqrt\eps$ and $\|f-\tilde{f}\|< 4\eps$, which complete the proof.
\end{proof}

\section{The Bishop-Phelps-Bollob\'{a}s property
of $(L_1(\mu),L_\infty(\nu))$}\label{sec:L_1-L_infty}

Our aim now is to show that $(L_1(\mu), L_\infty(\nu))$ has the BPBp for any measure $\mu$ and any localizable measure $\nu$.

\begin{theorem}\label{thm:localizable}
Let $\mu$ be an arbitrary measure and let $\nu$ be a localizable measure. Then the pair $(L_1(\mu),L_\infty(\nu))$ has the BPBp. Moreover,
$$
\eta\bigl(L_1(\mu),L_\infty(\nu)\bigr)(\eps)\geq \left(\frac{\eps}{10}\right)^8 \qquad (0<\eps<1).
$$
\end{theorem}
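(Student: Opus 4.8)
The plan is to peel off both the domain and the range with the reduction machinery of Section~\ref{sec:preliminary}, leaving a single concrete estimate to prove. Corollary~\ref{cor:main1} reduces the domain to $L_1(m_1)$ for a finite regular Borel measure on a compact space (equivalently, a probability measure), and Corollary~\ref{cor:to_L_infty-localizable} reduces the range from an arbitrary localizable $L_\infty(\nu)$ to the homogeneous building blocks $L_\infty([0,1]^\Lambda)$. After both reductions everything comes down to producing one strictly positive $\eta$, uniform in $m_1$ and in $\Lambda$, with
\[
\eta\bigl(L_1(m_1),L_\infty([0,1]^\Lambda)\bigr)(\eps)\geq (\eps/10)^8 \qquad (0<\eps<1),
\]
since $(\eps/10)^8\leq\eps^2/2$ makes the $\min$ in Corollary~\ref{cor:to_L_infty-localizable} harmless.

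For the core estimate I would use the classical isometric identification $\mathcal{L}\bigl(L_1(m_1),L_\infty(\Omega_2)\bigr)\equiv L_\infty(m_1\times\nu_2)$, writing a norm-one $T$ as a kernel $k$ with $\|k\|_\infty=1$ and $(Tf)(t)=\int f(s)k(s,t)\,dm_1(s)$. Replacing $T$ by $T\circ\psi^{-1}$ for a suitable isometry of $L_1(m_1)$ and approximating, I may assume $f_0=\sum_{j=1}^n\alpha_j\chi_{B_j}/m_1(B_j)$ is a nonnegative simple function, as in the proof of Theorem~\ref{thm:main1}. From $\|Tf_0\|_\infty>1-\eta$ I extract a positive-measure set $W=\{t:|(Tf_0)(t)|>1-\eta\}$ and a measurable phase $\theta(t)$ so that $\re\sum_j\alpha_j c_j(t)>1-\eta$ on $W$, where $c_j(t)=\frac{1}{m_1(B_j)}\int_{B_j}e^{-i\theta(t)}k(s,t)\,dm_1(s)$ has modulus $\leq 1$. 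Lemma~\ref{elementary} then gives, for each $t\in W$, an index set $J_t=\{j:\re c_j(t)>1-\delta\}$ with $\sum_{j\in J_t}\alpha_j\geq 1-\eta/\delta$.

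The decisive simplification is that $t\mapsto J_t$ takes only finitely many values (subsets of $\{1,\dots,n\}$), so one of its level sets $V=\{t\in W:J_t=J\}$ has positive measure, and automatically $\sum_{j\in J}\alpha_j\geq 1-\eta/\delta$ on it. Fixing this single $J$ and $V$, I would then build the competitor exactly in the spirit of Lemmas~\ref{lem:complex1} and \ref{lem:complex2}: shrink each $B_j$ ($j\in J$) to a subset $\tilde B_j$ on which $k(\cdot,t)$ is uniformly close to the unimodular $e^{i\theta(t)}$ for $t$ in a positive-measure $V'\subseteq V$, set $g=\sum_{j\in J}\tilde\beta_j\chi_{\tilde B_j}/m_1(\tilde B_j)$ after renormalizing, and define $\tilde k=e^{i\theta(t)}$ on $\bigl(\bigcup_{j\in J}\tilde B_j\bigr)\times V'$ and $\tilde k=k$ elsewhere. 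Since $g\geq 0$ and $\|g\|_1=1$, this forces $(Sg)(t)=e^{i\theta(t)}$ of modulus one for $t\in V'$, hence $\|Sg\|_\infty=\|S\|=1$; the distance estimates $\|g-f_0\|<4\eps$ and $\|S-T\|$ small then follow by the computations of Section~\ref{sec:L_1-L_1}, with the conversion $\re z>1-\delta_2\Rightarrow|z-1|\leq\sqrt{2\delta_2}$ accounting for the square-root losses. Composing the finitely many applications of Lemma~\ref{elementary} and of Chebyshev's inequality and tuning the thresholds is what produces the exponent $8$ and the constant $10$.

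The main obstacle is precisely the construction of the common sets $\tilde B_j$ and $V'$. Because the $L_\infty$ norm is an essential supremum, the perturbation $\tilde k-k$ must be uniformly small on the entire product set $\bigl(\bigcup_j\tilde B_j\bigr)\times V'$, whereas $\re c_j(t)>1-\delta$ only says that $k(\cdot,t)\approx e^{i\theta(t)}$ for $f_0$-most $s$, \emph{separately for each} $t$ — so the set of bad $s$ varies with $t$ and averaged control is insufficient. Overcoming this amounts to extracting a genuine measurable rectangle avoiding the bad set: I would apply Fubini to bound $\int_V m_1\{s\in B_j:\re[e^{-i\theta(t)}k(s,t)]\leq 1-\delta_2\}\,d\nu_2(t)$, then pass to a positive-measure $V'\subseteq V$ on which the $L_1(m_1)$-valued map $t\mapsto\chi_{\{s:\,\mathrm{bad}\}}$ is concentrated (it is essentially separably valued), using the finiteness of $m_1$ to keep the union of the bad $s$-slices over $V'$ of small measure. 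Carrying out this extraction with losses that stay within the claimed bound is the technical heart of the argument; the remainder is bookkeeping parallel to Section~\ref{sec:L_1-L_1}.
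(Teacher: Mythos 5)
Your reduction scheme coincides with the paper's (Corollary~\ref{cor:main1} for the domain, Corollary~\ref{cor:to_L_infty-localizable} for the range, and the bookkeeping $(\eps/10)^8\leq\eps^2/2$ is fine), but the core estimate has a genuine gap, and it sits exactly where you locate it: the extraction of a measurable rectangle $\bigl(\bigcup_{j\in J}\tilde B_j\bigr)\times V'$ with $m_1(\tilde B_j)\geq(1-O(\eps))\,m_1(B_j)$, $\nu(V')>0$, on which $k(s,t)$ is uniformly close to $e^{i\theta(t)}$. Such a rectangle need not exist, and no Fubini or separability argument can produce it. Concretely, take $J_1=[0,1]$ with Lebesgue measure $\lambda$ and $\Lambda$ a singleton, let $(\phi_n)$ be the coordinate functions of a measure isomorphism of $[0,1]$ with $[0,1]^{\N}$ (so the $\phi_n$ are i.i.d.\ uniform), and set $N=\bigcup_n\{(s,t):|s-\phi_n(t)|<\delta 4^{-n}\}$ (circle distance). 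Every slice $N^t$ has measure at most $2\delta/3$, yet for all sets $A,V'$ of positive measure and all $n$ beyond the finitely many coordinates that approximately determine $V'$, independence gives $(\lambda\otimes\lambda)\bigl((A\times V')\cap N\bigr)>0$; hence the complement $N^c$, although of measure $\geq 1-\delta$ with all slices of measure $\geq 1-\delta$, contains \emph{no} measurable rectangle of positive measure, even up to null sets. Taking $k=\chi_{N^c}$ and $f_0=\chi_{[0,1]}$ produces a norm-one operator with $\|Tf_0\|_\infty\geq1-\delta$ for which your construction has nothing to work with. Your proposed rescue --- pass to a $V'$ on which $t\mapsto\chi_{\mathrm{bad}_t}\in L_1(m_1)$ has small oscillation --- fails for the same reason: $\|\chi_{\mathrm{bad}_t}-\chi_{\mathrm{bad}_{t_0}}\|_1<\eps'$ for all $t\in V'$ gives no control on $m_1\bigl(\bigcup_{t\in V'}\mathrm{bad}_t\bigr)$, which in the example above is everything.

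The paper's proof of Theorem~\ref{thm:main2} is designed precisely to avoid this obstruction, by decoupling the two things you try to achieve on a single rectangle. First, the kernel is replaced by a simple function $h_0=\sum_l c_l\chi_{D_l}$ and is perturbed only on the union $D$ of those level sets with $\re(e^{i\theta}c_l)>1-\eps^2/2$, where $c_l$ is replaced by $e^{-i\theta}$; this is a pointwise modification of size $<\eps$ and needs no product structure at all. Norm attainment is then obtained separately in Lemma~\ref{basic}: using the martingale convergence theorem for the partitions $\Pi_n$ of $[0,1]^J$, one finds a density point $y_0$ and puts $g_0$ on the slices $H_j^{y_0}$, so that $\langle Sg_0,\chi_{B_n}/\nu(B_n)\rangle\to1$ along the shrinking neighborhoods $B_n$ of $y_0$ while the contribution of the kernel off $D$ tends to $0$. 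The essential supremum $\|Sg_0\|_\infty=1$ is thus witnessed by a limit of averages over shrinking sets, never by uniform closeness on a fixed rectangle. To complete your argument you would need to adopt this two-step structure; the rectangle route cannot be repaired.
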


By Corollaries \ref{cor:main1} and \ref{cor:to_L_infty-localizable}, it is enough to prove the result in the case where $\mu$ is  $\sigma$-finite and $\nu$ is the product measure on $[0,1]^\Lambda$. Therefore, we just need to prove the following result.

\begin{theorem}\label{thm:main2}
Assume $\mu$ is a $\sigma$-finite measure and $\nu$ is the product measure of Lebesgue measures on $[0,1]^\Lambda$. Let $0 <\eps
<1/3$, let $T:L_1(\mu)\longrightarrow  L_{\infty}(\nu)$ be a bounded linear operator of norm one and let $f_0\in S_{L_1(\mu)}$ satisfy $\|T(f_0)\|_{\infty}>1-\eps^8$. Then there exist $S\in \mathcal{L}(L_1(\mu), L_{\infty}(\nu))$ with $\|S\|=1$ and $g_0\in
S_{L_1(\mu)}$ such that
$$
\|S(g_0)\|_{\infty} =1,\quad \|T-S\|<2\eps
\quad \text{and} \quad \|f_0-g_0\|_1 < 10\eps.
$$
\end{theorem}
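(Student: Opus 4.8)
The plan is to prove Theorem~\ref{thm:main2} for the concrete model space $L_\infty(\nu)$ with $\nu$ the product Lebesgue measure on $[0,1]^\Lambda$, exploiting the fact that the target is an $L_\infty$-space, so that the operator $T$ can be analyzed pointwise. First I would represent $T:L_1(\mu)\to L_\infty(\nu)$ through its adjoint or via a kernel: for each $f\in L_1(\mu)$, $Tf$ is (the class of) a bounded measurable function on $[0,1]^\Lambda$, and the norm $\|Tf\|_\infty$ is an essential supremum. The hypothesis $\|Tf_0\|_\infty>1-\eps^8$ means that on a set of positive $\nu$-measure the value $|Tf_0(\omega)|$ exceeds $1-\eps^8$. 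The key structural advantage is that evaluation-type functionals $g\mapsto (Tg)(\omega)$ behave like norm-one elements of $L_\infty(\mu)=L_1(\mu)^*$ for a.e.\ $\omega$, so that fixing a good coordinate $\omega_0$ reduces a piece of the problem to the scalar Bishop--Phelps--Bollob\'as theorem (Theorem~\ref{thm:BPBTheorem}).

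The core steps, in order, would be: (1) Normalize so that $f_0\ge 0$, replacing $T$ by $T\circ\psi^{-1}$ for an isometry $\psi$ of $L_1(\mu)$ sending $f_0$ to $|f_0|$, exactly as in the proof of Theorem~\ref{thm:main1}. (2) Use the $\sigma$-finiteness of $\mu$ to write $T$ through a measurable family of functionals $\{\varphi_\omega\}\subset B_{L_\infty(\mu)}$ with $(Tf)(\omega)=\int f\,\varphi_\omega\,d\mu$, and locate a positive-measure set $A\subset[0,1]^\Lambda$ of coordinates where $\re\int f_0\varphi_\omega\,d\mu$ is close to $1$. (3) On such a coordinate, apply the scalar Bishop--Phelps--Bollob\'as theorem to the pair $(f_0,\varphi_\omega)$ to obtain a nearby norm-attaining functional $\psi_\omega$ and a nearby norm-one $g_0\in L_1(\mu)$; the exponent $8$ in $\eps^8$ and the constant $10$ are dictated by matching the scalar estimate $|\varphi_\omega(f_0)-1|<\eta^2/4$ and accumulating the several approximations. (4) Modify $T$ on the coordinate set $A$ to a new operator $S$ built so that $(Sg_0)(\omega)$ attains modulus one on $A$ while perturbing $T$ by less than $2\eps$ in operator norm; the $\ell_\infty$-structure of the target lets one alter $T$ on the ``good'' coordinates without disturbing its behavior elsewhere, keeping $\|S\|=1$ and $\|S-T\|<2\eps$.

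The main obstacle I expect is step~(4): producing a \emph{single} operator $S\in\mathcal{L}(L_1(\mu),L_\infty(\nu))$, rather than merely a pointwise family of corrected functionals, while simultaneously controlling the operator norm $\|S-T\|$ and guaranteeing $\|S g_0\|_\infty=\|S\|=1$. The difficulty is that the corrections $\psi_\omega$ supplied by the scalar theorem must be chosen measurably in $\omega$ and glued into a genuine bounded operator; one must verify that $\omega\mapsto\psi_\omega$ can be selected to depend measurably on $\omega$ (a measurable-selection issue), and that the resulting kernel still defines a bounded operator of norm exactly one into $L_\infty(\nu)$. A secondary obstacle is the bookkeeping of constants: one has to arrange that the $\eps$ spent in each of the four approximations (normalization, passing to the good coordinate set, the scalar BPB step, and the gluing) sums to the claimed $\|f_0-g_0\|_1<10\eps$ and $\|T-S\|<2\eps$, which forces the very small threshold $\eps^8$ in the hypothesis. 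Once the measurable selection is in hand, the remaining estimates are routine and parallel to the computations in Lemmas~\ref{lem:complex1} and~\ref{lem:complex2}.
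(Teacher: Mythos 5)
Your starting point---representing $T$ through a kernel, i.e.\ a weak$^*$-measurable family $\{\varphi_\omega\}\subset B_{L_\infty(\mu)}$ with $(Tf)(\omega)=\int f\varphi_\omega\,d\mu$---is exactly the identification $\mathcal{L}(L_1(\mu),L_\infty(\nu))\cong L_\infty(\mu\otimes\nu)$ that the paper uses. But the heart of your plan, ``fix a good coordinate $\omega_0$ and apply the scalar Bishop--Phelps--Bollob\'as theorem to $(f_0,\varphi_{\omega_0})$,'' cannot be carried out in this setting, and the obstruction is not the measurable-selection issue you flag. Since $\nu$ is a non-atomic product measure, every singleton of $[0,1]^\Lambda$ is $\nu$-null: evaluation at a point is not well defined on classes in $L_\infty(\nu)$, and modifying $S$ ``on the good coordinates'' one fiber at a time changes nothing if those fibers do not form a set of positive measure treated coherently. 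Worse, applying Theorem~\ref{thm:BPBTheorem} fiberwise produces a \emph{different} perturbed element $g_\omega\in S_{L_1(\mu)}$ for each $\omega$, and there is no mechanism in your outline for collapsing this family to the single $g_0$ the statement requires; even with a measurable selection $\omega\mapsto\psi_\omega$ in hand, the resulting operator would ``attain its norm at a different $g_\omega$ in each fiber,'' which is not the assertion $\|Sg_0\|_\infty=\|S\|=1$. A related attempt---averaging $T$ over a good set $A$ to get a single functional and applying scalar BPB to that---also fails, because closeness of the perturbed functional to the \emph{average} of $\{\varphi_\omega:\omega\in A\}$ does not control $\|S-T\|=\operatorname*{ess\,sup}_\omega\|\psi_\omega-\varphi_\omega\|$.

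The paper's proof does not use the scalar BPB theorem at all here. It approximates the kernel $h$ by a simple function $h_0=\sum_l c_l\chi_{D_l}$ with $\|h_0\|_\infty=1$, uses Lemma~\ref{elementary} twice (once on the atoms $A_j$ of $f_0$, once on the pieces $D_l$) to isolate a set $D=\bigcup_{l\in L}D_l$ on which the coefficients satisfy $\re(\e^{i\theta}c_l)>1-\eps^2/2$, and replaces those coefficients by the single unimodular constant $\e^{-i\theta}$; this rotation is the $<\eps$ perturbation of the kernel and automatically gives $\|S\|=1$. The ingredient your sketch is missing---and without which step~(4) cannot be completed---is Lemma~\ref{basic}: one projects to countably many coordinates $J$, takes a refining sequence of partitions of $[0,1]^J$, and uses the martingale almost-everywhere convergence theorem to produce a density point $y_0$ and fiber sets $H_j^{y_0}$ from which a \emph{single} simple function $g_0$ is built, the norm attainment $\|Sg_0\|_\infty=1$ being realized as a limit of averages over the shrinking cylinders $B_n=\pi_J^{-1}(B(n,\pi_J(y_0)))$. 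Some substitute for this density-point argument is indispensable; the exponent $\eps^8$ and the constant $10$ come from chaining Lemma~\ref{elementary} and Lemma~\ref{basic} (which costs a $4\sqrt{\cdot}$), not from the quadratic loss in the scalar BPB theorem.
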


Recall that the particular case where $\Lambda$ reduces to one point was established in \cite{ACGM}. Actually, our proof is based on the argument given there.

Prior to giving the proof of Theorem~\ref{thm:main2}, we present the following representation result for operators from $L_1(\mu)$ into $L_\infty\bigl([0,1]^\Lambda\bigr)$ and one lemma.

Let $(\Omega, \Sigma, \mu)$ be a $\sigma$-finite measure space and let $K=[0,1]^\Lambda$ be the product space equipped with the product measure $\nu$ of the Lebesgue measures. Let $J$ be a countable subset of $\Lambda$ and let $\pi_{J}$ be the natural projection from $K$ onto $[0,1]^J$. Fix a sequence $(\Pi_n)$ of finite partitions of $[0,1]^J$ into sets of positive measure such that $\Pi_{n+1}$ is a refinement of $\Pi_n$ for each $n$, and the $\sigma$-algebra generated by $\bigcup_{n=1}^\infty \Pi_n$ is the Borel $\sigma$-algebra of $[0,1]^J$. For each $y\in K$ and $n\in \mathbb{N}$, let $B(n,\pi_J(y))$ be the set  in $\Pi_n$ containing $\pi_J(y)$. Then, given a Borel set $F$ of the form $F_0\times [0,1]^{\Lambda\setminus J}$ with $F_0 \subset [0,1]^J$, define
\[ \delta(F) = \left\{ y\in K : \lim_{n\to \infty} \frac{ \nu(F\cap \pi_J^{-1}(B(n,\pi_J(y)))}{\nu(\pi_J^{-1}(B(n,\pi_J(y)))}=1\right\}.\]
It is easy to check that $\delta(F) = \delta_J(F_0)\times [0,1]^{\Lambda\setminus J}$, where
\[\delta_J(F_0) = \left\{ y\in [0,1]^J : \lim_{n\to \infty} \frac{ \nu(\pi_J^{-1}(F_0\cap B(n,y)))}{\nu(\pi_J^{-1}(B(n,y)))}=1\right\}.\]
Using the martingale almost everywhere convergence theorem \cite{Doob}, we have
\[
\nu(F\Delta \delta(F))=0
\]
where $F\Delta \delta(F)$ denotes the symmetric difference of the sets $F$ and $\delta(F)$.

On the other hand, it is well-known that the space $\mathcal{L}(L_1(\mu), L_\infty(\nu))$ is isometrically isomorphic to the space $L_\infty(\mu\otimes \nu)$, where $\mu\otimes \nu$ denotes the product measure on $\Omega\times K$. More precisely, the operator $\widehat h$ corresponding to $h\in L_\infty(\mu\otimes\nu)$ is given by
\[ \widehat h(f)(t) = \int_\Omega h(\omega, t)f(\omega) \, d\mu(\omega)\] for $\nu$-almost every $t\in K$. For a reference, see \cite{DF}.

\begin{lemma}\label{basic}
Let $M$ be a measurable subset of $\Omega \times K$ with positive measure, $0<\eps <1$, and let $f_0$ be a simple function.
If $\|\widehat{\chi_M}(f_0)\|_{\infty}>1-\eps$, then there exists a simple function $g_0\in
S_{L_1(\mu)}$ such that
$$
\left\|[\widehat{\chi_M} + \widehat{\varphi}](g_0)\right\|_{\infty}=1 \quad \text{and} \quad
\|f_0-g_0\|_1<4\sqrt{\eps}
$$
for every simple function $\varphi$ in
$L_{\infty}(\mu\otimes \nu)$ with $\|\varphi\|_{\infty}\leq 1$ and vanishing on $M$.
\end{lemma}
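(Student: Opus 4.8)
The plan is to read the action of $\widehat{\chi_M}$ through its slices. Writing $M_t=\{\omega:(\omega,t)\in M\}$ for the $\Omega$-slice and $M^\omega=\{t:(\omega,t)\in M\}$ for the $K$-slice, one has $\widehat{\chi_M}(f)(t)=\int_{M_t}f\,d\mu$, and the idea is to build $g_0$ supported on a single good slice. Let $J$ be a countable set of coordinates on which $M$ depends and let $\tilde M\subseteq\Omega\times[0,1]^J$ be the corresponding trace, with $\tilde M_y$ and $\tilde M^\omega$ its slices. From $\|\widehat{\chi_M}(f_0)\|_\infty>1-\eps$ the set $G_0=\{y:|\int_\Omega\chi_{\tilde M}(\omega,y)f_0(\omega)\,d\mu|>1-\eps\}$ has positive measure, and since $\{(\omega,y):y\in\delta_J(\tilde M^\omega)\}=\tilde M$ up to $(\mu\otimes\nu_J)$-null sets, for a.e.\ $y_0$ the slice $\{\omega:y_0\in\delta_J(\tilde M^\omega)\}$ agrees $\mu$-a.e.\ with $\tilde M_{y_0}$. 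First I would fix $y_0\in G_0$ of this second kind and set $S=\tilde M_{y_0}\cap\operatorname{supp}f_0$, so that $\lambda:=\int_Sf_0\,d\mu=|\lambda|e^{i\theta}$ has $|\lambda|>1-\eps$, $N:=\int_S|f_0|\,d\mu\in(1-\eps,1]$, and for $\mu$-a.e.\ $\omega\in S$ one has $y_0\in\delta_J(\tilde M^\omega)$. I then define the simple function $g_0=e^{i\theta}|f_0|\chi_S/N\in S_{L_1(\mu)}$. The bound $\|f_0-g_0\|_1<4\sqrt\eps$ splits into the mass defect $\int_{S^c}|f_0|\,d\mu=1-N<\eps$, the normalization defect $|N-1|<\eps$, and the phase defect $\int_S\bigl||f_0|-e^{-i\theta}f_0\bigr|\,d\mu$; the last is at most $\sqrt{2\eps}$ by Cauchy--Schwarz, using $\int_S(|f_0|-\re(e^{-i\theta}f_0))\,d\mu=N-|\lambda|<\eps$ together with $\int_S|f_0|\,d\mu\leq1$, which totals $2\eps+\sqrt{2\eps}<4\sqrt\eps$.

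For the upper estimate I would use the isometry $\mathcal L(L_1(\mu),L_\infty(\nu))\equiv L_\infty(\mu\otimes\nu)$. Since $\widehat{\chi_M}+\widehat\varphi=\widehat{\chi_M+\varphi}$, and since $\varphi$ vanishes on $M$ while $|\varphi|\leq1$ off $M$, we get $|\chi_M+\varphi|\leq1$ pointwise, hence $\|\widehat{\chi_M}+\widehat\varphi\|=\|\chi_M+\varphi\|_\infty\leq1$ and therefore $\|[\widehat{\chi_M}+\widehat\varphi](g_0)\|_\infty\leq\|g_0\|_1=1$ for every admissible $\varphi$.

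The heart of the argument is the matching lower bound, which must hold for \emph{all} admissible $\varphi$ at once. Put $P(t)=\int_{M_t}g_0\,d\mu$ and $R(t)=\int_{M_t^c}|g_0|\,d\mu$. Because $\varphi$ vanishes on $M$, for every admissible $\varphi$ one has $|[\widehat{\chi_M}+\widehat\varphi](g_0)(t)|\geq|P(t)|-R(t)$ pointwise, so it suffices to show $\mathop{\mathrm{ess\,sup}}_t(|P(t)|-R(t))=1$. As $g_0=e^{i\theta}|g_0|$ has constant phase, $|P(t)|=\int_{M_t}|g_0|\,d\mu=1-R(t)$, whence $|P(t)|-R(t)=1-2R(t)$ and the claim reduces to $\mathop{\mathrm{ess\,inf}}_tR(t)=0$. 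This is exactly where the density sets $\delta_J(\cdot)$ enter: averaging $R$ over the atom $\pi_J^{-1}(B(n,y_0))$ and applying Fubini gives
\[
\frac{1}{\nu_J(B(n,y_0))}\int_{\pi_J^{-1}(B(n,y_0))}R(t)\,d\nu(t)=\int_S|g_0(\omega)|\,\frac{\nu_J\bigl(B(n,y_0)\setminus\tilde M^\omega\bigr)}{\nu_J\bigl(B(n,y_0)\bigr)}\,d\mu(\omega),
\]
and for $|g_0|$-a.e.\ $\omega\in S$ the density ratio tends to $0$ because $y_0\in\delta_J(\tilde M^\omega)$. By dominated convergence these averages tend to $0$, so for each $\delta>0$ the set $\{t\in\pi_J^{-1}(B(n,y_0)):R(t)<\delta\}$ has positive measure once $n$ is large; on it $|P(t)|-R(t)>1-2\delta$. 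Letting $\delta\to0$ yields $\mathop{\mathrm{ess\,sup}}_t(|P(t)|-R(t))=1$, and combined with the upper bound this forces $\|[\widehat{\chi_M}+\widehat\varphi](g_0)\|_\infty=1$ for every admissible $\varphi$.

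I expect this last step to be the genuine obstacle. The naive goal is a positive-measure set of $t$ with $\operatorname{supp}g_0\subseteq M_t$, so that an adversarial $\varphi$ contributes nothing; but Lebesgue density only tells us that the slices $M^\omega$ fill the shrinking atoms $B(n,y_0)$ asymptotically, never that any atom is exactly contained in a slice. The device that bridges this gap is to work with the averaged quantity $R(t)$ and with the density sets $\delta_J(\tilde M^\omega)$ instead of $M^\omega$ itself, thereby upgrading ``density one'' to ``$\mathop{\mathrm{ess\,inf}}R=0$'', which is all the essential supremum requires. The two minor points to verify in passing are that $g_0$ is genuinely simple (a finite combination of indicators $\chi_{A_j\cap\tilde M_{y_0}}$) and that the admissible $y_0$ may be chosen in $G_0$ and common-density simultaneously, which holds because $G_0$ has positive measure and the exceptional set of the density statement is null.
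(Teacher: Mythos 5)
Your proof is correct and rests on the same two pillars as the paper's own argument: the density sets $\delta_J(\cdot)$ supplied by the martingale almost-everywhere convergence theorem, and the Fubini computation over the shrinking atoms $\pi_J^{-1}(B(n,y_0))$ which forces the contribution of every admissible $\varphi$ to vanish asymptotically. The differences are organizational rather than substantive: the paper selects good atoms $A_j$ via Lemma~\ref{elementary} and flattens $f_0$ into normalized indicators of the sets $H_j^{y_0}$, whereas you restrict $|f_0|$ to the single slice $\tilde M_{y_0}\cap\operatorname{supp} f_0$ and recast the lower bound as $\operatorname{ess\,inf}_t R(t)=0$ --- a mild streamlining that, as a bonus, handles signed and complex $f_0$ via the Cauchy--Schwarz phase estimate, a case the paper's written proof tacitly assumes away.
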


\begin{proof}
Write $f_0=\sum_{j=1}^m \alpha_j\frac{\chi_{A_j}}{\mu(A_j)}\in S_{L_1(\mu)}$, where each $A_j$ is a
measurable subset of $\Omega$ with finite positive measure, $A_k\cap A_l = \emptyset$ for $k\neq l$, and $\alpha_j$ is a positive real number for every $j=1,\ldots, m$ with $\sum_{j=1}^{m} \alpha_j
=1$. Since $\|\widehat{\chi_M}(f_0)\|_{\infty}>1-\eps$, there is a measurable subset $B$ of $K$ such that $0<\nu(B)$ and
$$
\inner{\widehat{\chi_M}(f_0), \frac{\chi_B}{\nu(B)}} > 1-\eps.
$$
We may assume that there is a countable subset $J$ of $\Lambda$ such that $M=M_0\times [0,1]^{\Lambda\setminus J}$ and $B=B_0\times [0,1]^{\Lambda\setminus J}$ for some measurable subsets $M_0 \subset \Omega\times [0,1]^J$ and $B_0\subset [0,1]^J$. For each $j\in \{1,\ldots, m\}$, we write $M_j=M~\bigcap~ ( A_j\times B)=(M_0\cap (A_j\times B_0))\times[0,1]^{\Lambda\setminus J}$ and define
$$
H_j=\{(x,y) \,:\, x\in A_j, y\in \delta\big((M_j)_x\big)
\}.
$$
As in the proof of \cite[Proposition~5]{PS}, the $H_j$'s are disjoint measurable subsets of $\Omega \times K$. We note
that for each $j\in \{1,\ldots, m\}$, we have $H_j\subset  A_j\times \delta(B)$ and $(\mu\otimes \nu)(M_j\Delta H_j) =0$.

Now, by Fubini theorem, we have that
\begin{align*}
1-\eps &< \langle \widehat{\chi}_M(f_0), \frac{\chi_B}{\nu(B)}\rangle\\
&=\sum_{j=1}^m \frac{\alpha_j}{\mu(A_j)\nu(B)} \int_{\Omega\times K} \chi_{M_j}(x,y)~d(\mu\otimes \nu)\\
&=\sum_{j=1}^m \frac{\alpha_j}{\mu(A_j)\nu(B)} \int_{\Omega\times K} \chi_{H_j}(x,y)~d(\mu\otimes \nu)\\
&=\sum_{j=1}^m \frac{\alpha_j}{\nu(B)} \int_{\delta(B)} \frac{\mu(H_j^y)}{\mu(A_j)}~d\nu(y)\\
&=\frac1{\nu(\delta(B))} \int_{\delta(B)} \sum_{j=1}^m  {\alpha_j}\frac{\mu(H_j^y)}{\mu(A_j)}~d\nu(y).
\end{align*}
So, there exists $y_0\in \delta(B)$ such that
\[
\sum_{j=1}^m  {\alpha_j}\frac{\mu(H_j^{y_0})}{\mu(A_j)}>1-\eps.
\]
Let $J = \left\{ j\in\{1,\ldots,m\}\, : \, \frac{\mu(H_j^{y_0})}{\mu(A_j)}>1-\sqrt \eps\right\}$. For each $j\in J$, we have that $\mu(A_j\setminus H_j^{y_0}) < \sqrt\eps \mu(A_j)$ and, by Lemma~\ref{elementary}, we also have $\alpha_J := \sum_{j\in J} \alpha_j >1-\sqrt{\eps}$. Define
$$
g_0 = \sum_{j\in J} \beta_j \frac{\chi_{H_j^{y_0}}}{\mu(H_j^{y_0})},
$$
where $\beta_j={\alpha_j}/\alpha_J$. Then
\begin{align*}
\|g_0- f_0\| &< \nor{ \sum_{j\in J}  \beta_j \frac{\chi_{H_j^{y_0}}}{\mu(H_j^{y_0})}  - \sum_{j\in J} \alpha_j \frac{\chi_{A_j}}{\mu(A_j)} } +\sqrt{\eps}\\
& \leq \nor{  \sum_{j\in J}  \beta_j \frac{\chi_{H_j^{y_0}}}{\mu(H_j^{y_0})}  - \sum_{j\in J} \beta_j \frac{\chi_{A_j}}{\mu(A_j)} } + \nor {  \sum_{j\in J} \beta_j \frac{\chi_{A_j}}{\mu(A_j)} - \sum_{j\in J} \alpha_j \frac{\chi_{A_j}}{\mu(A_j)} } + \sqrt\eps\\
&\leq  \nor{  \sum_{j\in J}  \beta_j \frac{\chi_{H_j^{y_0}}}{\mu(H_j^{y_0})}  - \sum_{j\in J} \beta_j \frac{\chi_{H_j^{y_0}}}{\mu(A_j)} } + \nor{ \sum_{j\in J} \beta_j \frac{\chi_{H_j^{y_0}}}{\mu(A_j)} -\sum_{j\in J} \beta_j \frac{\chi_{A_j}}{\mu(A_j)}  }  +2\sqrt\eps\\
&\leq 2\frac{\mu(A_j\setminus H_j^{y_0})}{\mu(A_j)} + 2\sqrt{\eps} \leq 4\sqrt\eps
\end{align*}
We claim that $\widehat{\chi_M}+\widehat{\varphi}$ attains its norm at $g_0$.
Let $B_n = \pi_J^{-1}(B(n,\pi_J({y_0})))$ for each $n$. Note that for every $x\in H_j^{y_0}$ we have $(x,{y_0})\in H_j$, which implies
that
$$
\lim_{n\to \infty} \frac{\nu\big((M_j)_x\cap B_n\big)}{\nu(B_n)} =1.
$$
It follows from the Lebesgue dominated
convergence theorem and Fubini theorem that, for each $j\in J$,
\begin{eqnarray*} 1 & = &\lim_{n\to \infty} \frac{1}{\mu(H_j^{y_0})} \int_{H_j^{y_0}} \frac{\nu\big((M_j)_x\cap
B_n\big)}{\nu(B_n)} ~d\mu(x)\\ & =& \lim_{n\to \infty} \frac{(\mu \otimes \nu)(M_j\cap (H_j^{y_0}\times
B_n))}{\mu(H_j^{y_0}) \nu(B_n)}.
\end{eqnarray*}
On the other hand, since the simple function $\varphi$ is assumed to vanish on $M$ and $\|\varphi\|_{\infty} \leq 1$, we have
\begin{eqnarray*}
\left|\big\langle \widehat{\varphi}\big( \frac{\chi_{H_j^{y_0}}}{\mu(H_j^{y_0})}\big),
\frac{\chi_{B_n}}{\nu(B_n)}\big\rangle\right|& =& \left| \frac{1}{\mu(H_j^{y_0}) \nu(B_n)}\int_{H_j^{y_0}\times
B_n} \varphi ~d(\mu\otimes \nu)\right| \\ & \leq &  \frac{(\mu\otimes \nu)((H_j^{y_0}\times B_n)\setminus
M_j)}{\mu(H_j^{y_0}) \nu(B_n)}\\ & = & 1- \frac{(\mu \otimes \nu)(M_j\cap (H_j^{y_0}\times B_n))}{\mu(H_j^{y_0})
\nu(B_n)} \longrightarrow 0,
\end{eqnarray*} as
$n\to \infty$. Therefore,
\begin{eqnarray*}
1\geq  \big\|[\widehat{\chi_M} + \widehat{\varphi}](g_0)\big\|_{\infty} &\geq & \lim_{n\to \infty}
\left|\inner{(\widehat{\chi_M} + \widehat{\varphi})\Big(\sum_{j\in J} \beta_j
\frac{\chi_{H_j^{y_0}}}{\mu(H_j^{y_0})}\Big), \frac{\chi_{B_n}}{\nu(B_n)}}\right|\\
&= & \lim_{n\to \infty} \sum_{j\in J}
\beta_j \frac{(\mu \otimes
\nu)(M\cap (H_j^{y_0}\times B_n))}{\mu(H_j^{y_0}) \nu(B_n)}\\
&\geq&
\lim_{n\to \infty} \sum_{j\in J}
\beta_j \frac{(\mu \otimes\nu)(M_j\cap (H_j^{y_0}\times B_n))}{\mu(H_j^{y_0}) \nu(B_n)}= 1,
\end{eqnarray*}
which shows that $\widehat{\chi_M}+\widehat{\varphi}$ attains its norm at $g_0$.
\end{proof}

We are now ready to present the proof of the main result in this section.

\begin{proof}[Proof of Theorem~\ref{thm:main2}] Since the set of all simple functions is dense in
$L_1(\mu)$, we may assume $$f_0=\sum_{j=1}^m
\alpha_j\frac{\chi_{A_j}}{\mu(A_j)}\in S_{L_1(\mu)},$$ where each
$A_j$ is a measurable subset of $\Omega$ with finite positive
measure, $A_k\cap A_l = \emptyset$ for $k\neq l$, and every $\alpha_j$ is a nonzero complex number with $\sum_{j=1}^{m} |\alpha_j| =1$.
We may also assume that $0<\alpha_j\leq 1$ for every
$j=1,\ldots, m$. Indeed, there exists an isometric isomorphism $\Psi : L_1(\mu) \longrightarrow L_1(\mu)$ such that $\Psi(f_0)=|f_0|$. Hence we may replace $T$ and $f_0$ by
$T\circ \Psi^{-1}$ and $\Psi(f_0)$, respectively.

Let $h$ be the element in $L_{\infty}(\Omega\times K, \mu\otimes
\nu)$ with $\|h\|_{\infty}=1$ corresponding to $T$, that is, $T= \widehat{h}$. We may find a simple function
$$
h_0\in L_{\infty}(\Omega\times K, \mu\otimes \nu),
\quad \|h_0\|_{\infty} =1
$$
such that $\|h-h_0\|_{\infty} < \|T(f_0)\|_{\infty} -
(1-\eps^8)$, hence $\|\widehat{h}_0(f_0)\|_{\infty} > 1-\eps^8$. We can write $h_0= \sum_{l=1}^p c_l \chi_{D_l}$, where each $D_l$ is a measurable subset of $\Omega\times K$ with positive measure, $D_k\cap D_l= \emptyset$ for $k\neq l$, $|c_l|\leq 1$ for every $l=1, \ldots, p$, and $|c_{l_0}| =1$ for
some $1\leq l_0\leq p$.

Let $B$ be a Lebesgue measurable subset of $K$ with $0<\nu(B)<\infty$ such that
$$\left|\inner{\widehat{h}_0(f_0), \frac{\chi_B}{\nu(B)}} \right| > 1-\eps^8.$$
Choose $\theta\in \mathbb{R}$ so that
\begin{eqnarray*}
1- \eps^8 &< & \big|\langle\widehat{h}_0(f_0),
\frac{\chi_B}{\nu(B)}\rangle\big|\\ &=& \e^{i\theta} \langle\widehat{h}_0(f_0),
\frac{\chi_B}{\nu(B)}\rangle\\
&=& \sum_{j=1}^m\alpha_j~ \e^{i \theta}
\inner{\widehat{h}_0(\frac{\chi_{A_j}}{\mu(A_j)}),
\frac{\chi_B}{\nu(B)}}.
\end{eqnarray*}
Set
$$
J=\left\{j\in \{1,\ldots,m\}\, : \, \re \big[~ \e^{i \theta}
\langle\widehat{h}_0(\frac{\chi_{A_j}}{\mu(A_j)}),
\frac{\chi_B}{\nu(B)}\rangle\big] ~> 1-\eps^4\right\}.
$$
By Lemma~\ref{elementary}, we have
$$\alpha_J= \sum_{j\in J} \alpha_j
> 1-\frac{\eps^8}{1-(1-\eps^4)}=1-\eps^4.
$$
We define $$f_1=\sum_{j\in J} \left(\frac{\alpha_j}{\alpha_J} \right)\frac{\chi_{A_j}}{\mu(A_j)}.$$
We can see that $\|f_1\|_1 =1$,
\begin{eqnarray*}
\|f_0 - f_1\|_1 &\leq & \Big\|\sum_{j\notin J}\alpha_j
\frac{\chi_{A_j}}{\mu(A_j)}\Big\|_1
 +\Big (\frac{1}{\alpha_J} -1\Big)~\Big\|\sum_{j\in J}\alpha_j
\frac{\chi_{A_j}}{\mu(A_j)}\Big\|_1\\
&=& \sum_{j\notin J}\alpha_j + (1-\alpha_J) = 2 (1-\alpha_J) < 2\eps^4.
\end{eqnarray*}
and
\begin{eqnarray*}
\left|\langle\widehat{h}_0(f_1), \frac{\chi_B}{\nu(B)}\rangle \right| &\geq
&\re \left[
\e^{i\theta}\inner{ \widehat{h}_0(f_1), \frac{\chi_B}{\nu(B)}}\right]\\
&=& \frac{1}{\alpha_J} \sum_{j\in J} \alpha_j~\re \left[
\e^{i\theta}\inner{ \widehat{h}_0(\frac{\chi_{A_j}}{\mu(A_j)}), \frac{\chi_B}{\nu(B)}}\right]\\
&>& \frac{1}{\alpha_J} \sum_{j\in J} \alpha_j (1-\eps^4) =
1-\eps^4.
\end{eqnarray*}
Let $L  = \left\{ l\in\{1,\ldots,p\}\, :\, \re(\e^{i\theta} c_l) > 1-\frac{\eps^2}{2}\right\}$. On the other hand, for each $j\in J$, we have
\begin{eqnarray*}
1-\eps^4 & < & \re \left[
\e^{i\theta}\inner{\widehat{h}_0(\frac{\chi_{A_j}}{\mu(A_j)}),
\frac{\chi_B}{\nu(B)}}\right]\\
&=& \sum_{l=1}^p\re(\e^{i\theta} c_l)~ \frac{(\mu\otimes \nu) (D_l\cap
(A_j\times B))}{\mu(A_j)\nu(B)} \\
&\leq& \sum_{l\in \{1, \dots, p\}\setminus L} (1-\frac{\eps^2}{2} )\frac{(\mu\otimes \nu) (D_l\cap
(A_j\times B))}{\mu(A_j)\nu(B)}\\
&\ &\ \ \ \ \ + \sum_{l\in L} \frac{(\mu\otimes \nu) (D_l\cap
(A_j\times B))}{\mu(A_j)\nu(B)}\\
&\leq& 1 -  \frac{\eps^2}{2}\sum_{l\in \{1, \dots, p\}\setminus L}  \frac{(\mu\otimes \nu) (D_l\cap
(A_j\times B))}{\mu(A_j)\nu(B)}.
\end{eqnarray*}
This implies that for each $j\in J$
\[
\sum_{l\in \{1, \dots, p\}\setminus L}  \frac{(\mu\otimes \nu) (D_l\cap
(A_j\times B))}{\mu(A_j)\nu(B)}\leq 2\eps^2.
\]
Since
\[\sum_{l=1}^p  \frac{(\mu\otimes \nu) (D_l\cap
(A_j\times B))}{\mu(A_j)\nu(B)}>1-\eps^4,
\]
for every $j\in J$ we have that
\[ \sum_{l\in L}\frac{(\mu\otimes \nu)(D_l\cap (A_j\times B))}{\mu(A_j)\nu(B)}\geq (1-\eps^4-2\eps^2)\geq 1-3\eps^2.\]
Set $D= \bigcup_{l\in L} D_l$. Then we can see
\begin{eqnarray*} \langle\widehat{\chi}_{D} (f_1),
\frac{\chi_B}{\nu(B)}\rangle &=& \sum_{j\in J} \big(\frac{\alpha_j}{\alpha_J}\big) \cdot
\sum_{l\in L}
\frac{\mu\otimes \nu(D_l\cap (A_j\times B))}{\mu(A_j)\nu(B)}\geq 1-3\eps^2.
\end{eqnarray*}
By Lemma~\ref{basic}, there is $g_0\in S_{L_1(\mu)}$ such that $\|(\widehat{\chi}_{D} + \widehat{\varphi})(g_0)\|_{\infty} =1$ and $\|f_1 -g_0\| <4\sqrt{3\eps^2}<8\eps$ for every simple function $\varphi$ in $L_{\infty}(\mu\otimes \nu)$ vanishing on $D$ with $\|\varphi\|_{\infty}\leq 1$. Therefore, we have
$$
\|f_0-g_0\|_1\leq
\|f_0-f_1\|_1 + \|f_1-g_0\|_1 \leq 2\eps^4 + 8\eps<10\eps.
$$
Define $$h_1 = \e^{-i\theta}~\chi_{D} + \sum_{l\notin L} c_l~
\chi_{D_l} \in L_{\infty}(\mu\otimes \nu).$$ Let $S$ be the operator in $
\mathcal{L}(L_1(\mu), L_{\infty}(m))$ corresponding to $h_1$. Then we get
$$
\|S(g_0)\|_{\infty}= \|\widehat{h_1}(g_0)\|_{\infty}=1
$$
and
$$
\|h_0 -h_1\|_{\infty} =  \max_{l\in L} |c_l -
\e^{-i\theta}| = \max_{l\in L} |\e^{i\theta} c_l - 1|.
$$
As $\re
(\e^{i\theta} c_l) > 1- \frac{\eps^2}{2}$ for every $l\in L$, we have that
\begin{eqnarray*}
\big(\im (\e^{i\theta} c_l)\big)^2 &\leq& 1 - \big(\re (\e^{i\theta} c_l)\big)^2 < 1-(1-\frac{\eps^2}{2})^2 = \eps^2 - \frac{\eps^4}{4}.
\end{eqnarray*}
Since
\begin{eqnarray*}
|\e^{i\theta} c_l - 1|  &= &\sqrt{\big(1- \re ( \e^{i\theta} c_l)\big)^2 + \big(\im
(\e^{i\theta} c_l)\big)^2}\\ & < & \sqrt{\eps^4/4 + (\eps^2 - \eps^4/4)}
=\eps,
\end{eqnarray*}
we conclude that
$$
\|h_0 - h_1\|_{\infty} < \eps
$$
and
\begin{equation*}
\|T-S\|_{\infty}\leq  \|h - h_0\|_{\infty} + \|h_0 - h_1\|_{\infty} < \eps^8 + \eps < 2\eps.\qedhere
\end{equation*}
\end{proof}

\section{The Bishop-Phelps-Bollob\'as Property for some operators from $L_1(\mu)$ into $C(K)$}\label{sec:L_1-C(K)}

Throughout this section, we consider only a finite measure $\mu$ on a measurable space $(\Omega, \Sigma)$ and \textbf{real} Banach spaces $L_1(\mu)$ and $C(K)$. Our aim is to obtain the Bishop-Phelps-Bollob\'{a}s property for some classes of operators from $L_1(\mu)$ to $C(K)$, sharpening the results about denseness of norm-attaining operators given by Iwanik in 1982 \cite{Iwa2}.

We use the following standard representation of operators into $C(K)$ \cite[Theorem~1 in p.~490]{DS}.

\begin{lemma} Given a bounded linear operator $T:X\longrightarrow C(K)$, define $F : K\longrightarrow X^*$ by $F(s) = T^*(\delta_s)$, where $\delta_s$ is the point measure at $s\in K$. Then, for $x\in X$, the relation
$Tx(s) = \inner{x, F(s)}$ defines an isometric isomorphism of $\mathcal{L}(X, C(K))$ onto the space of weak$^*$ continuous functions from $K$ to $X^*$ with the supremum norm. Moreover,  compact operators correspond to norm continuous functions.
\end{lemma}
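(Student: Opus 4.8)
The plan is to verify the two directions of the correspondence separately and then treat the compactness statement. First I would check that $F$ is a well-defined weak$^*$ continuous map. For each $s\in K$ the evaluation functional $\delta_s$ lies in $S_{C(K)^*}$, so $F(s)=T^*(\delta_s)$ belongs to $X^*$ with $\norm{F(s)}\leq \norm{T^*}\,\norm{\delta_s}=\norm{T}$. The defining relation is immediate: for $x\in X$,
\[
\inner{x,F(s)}=\inner{x,T^*\delta_s}=\inner{Tx,\delta_s}=Tx(s).
\]
Since $Tx\in C(K)$, the scalar function $s\mapsto \inner{x,F(s)}=Tx(s)$ is continuous for every $x$, which is precisely the assertion that $F$ is weak$^*$ continuous.

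Next I would prove that $T\mapsto F$ is a linear isometry onto the stated space. Linearity is clear from linearity of $T\mapsto T^*$. For the isometry, interchanging the two suprema gives
\[
\norm{T}=\sup_{x\in B_X}\sup_{s\in K}\abs{Tx(s)}=\sup_{s\in K}\sup_{x\in B_X}\abs{\inner{x,F(s)}}=\sup_{s\in K}\norm{F(s)},
\]
and the right-hand side is exactly the supremum norm of $F$. Surjectivity is the converse construction: given a weak$^*$ continuous $F\colon K\longrightarrow X^*$, define $Tx(s):=\inner{x,F(s)}$. For fixed $x$ this is continuous in $s$, so $Tx\in C(K)$, and $T$ is linear. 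Its boundedness requires that $F$ be norm-bounded, which follows from the uniform boundedness principle: for each $x$ the continuous function $s\mapsto\inner{x,F(s)}$ is bounded on the compact set $K$, so the family $\{F(s):s\in K\}$ is pointwise bounded, hence norm-bounded. By construction $T^*\delta_s=F(s)$, so $F$ is the image of $T$ and the map is onto.

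For the final assertion, the key observation is the identity
\[
\norm{F(t)-F(s)}=\sup_{x\in B_X}\abs{Tx(t)-Tx(s)}\qquad (s,t\in K),
\]
obtained exactly as the isometry computation above applied to $F(t)-F(s)=T^*(\delta_t-\delta_s)$. This identity shows that norm continuity of $F$ at a point $s$ is \emph{equivalent} to equicontinuity of the family $T(B_X)\subset C(K)$ at $s$. If $F$ is norm continuous, then $T(B_X)$ is bounded and equicontinuous, hence relatively compact by the Arzel\`a--Ascoli theorem, so $T$ is compact; conversely, if $T$ is compact, then $T(B_X)$ is relatively compact, hence equicontinuous by Arzel\`a--Ascoli, and the identity forces $F$ to be norm continuous. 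The main obstacle is precisely this last equivalence: one must invoke the equicontinuity characterization of relative compactness in $C(K)$ (Arzel\`a--Ascoli for a general compact Hausdorff space) and recognize that the displayed supremum identity transfers it verbatim to norm continuity of $F$.
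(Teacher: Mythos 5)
Your proof is correct: the defining identity, the double-supremum isometry computation, the uniform boundedness argument for surjectivity, and the Arzel\`a--Ascoli characterization of compactness via the equicontinuity of $T(B_X)$ are all exactly the standard argument. The paper itself does not prove this lemma but simply cites it from Dunford--Schwartz (Theorem~1, p.~490), and your argument is the one found there, so there is nothing to compare beyond noting that your write-up supplies the details the paper omits.
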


Iwanik \cite{Iwa2} considered operators $T\in \mathcal{L}(L_1(\mu),C(K))$ satisfying one of the following conditions:
\begin{enumerate}
  \item The map  $s\longmapsto T^*\delta_s$ is continuous in measure.
  \item There exists a co-meager set $G\subset K$ such that $\{T^*\delta_s:s\in G\}$ is norm separable in $L_\infty(\mu)$.
\end{enumerate}
We recall that a subset $A$ is said to be a co-meager subset of $K$ if the set $K\setminus A$ is meager, that is, of first category.

\begin{theorem} \label{KIwa}
Let $0<\eps<1$. Suppose that $T\in \mathcal{L}(L_1(\mu),C(K))$ (real case) has norm one and satisfies condition (1). If $\|Tf\|>1-\frac{\eps^2}6$ for some $f\in S_{L_1(\mu)}$, then there exist $S\in \mathcal{L}(L_1(\mu),C(K))$ with $\|S\|=1$ and $g\in S_{L_1(\mu)}$ such that $\|Sg\|=1$, $\|S-T\|<\eps$, and $\|f-g\|<\eps$. Moreover,  $S$ also satisfies condition (1).
\end{theorem}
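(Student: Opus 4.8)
The plan is to combine the representation of operators into $C(K)$ with a Bishop--Phelps--Bollob\'as argument carried out inside $L_1(\mu)$, and then to transport the resulting pointwise modification to a genuine operator by a global truncation. Write $F(s)=T^*\delta_s\in L_\infty(\mu)=L_1(\mu)^*$, so that $Tf(s)=\inner{f,F(s)}$, $\sup_{s}\nor{F(s)}_\infty=\nor{T}=1$, and condition~(1) says precisely that $s\mapsto F(s)$ is continuous in measure. Since $Tf\in C(K)$ and $K$ is compact, the value $\nor{Tf}$ is attained at some $s_0\in K$; up to the symmetric case treated at the end, I may assume $\inner{f,\phi_0}>1-\delta$, where $\phi_0:=F(s_0)$ and $\delta:=\eps^2/6$.

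First I would run the scalar Bishop--Phelps--Bollob\'as step. From $\int\abs{f}\,(1-\sign(f)\phi_0)\,d\mu=1-\inner{f,\phi_0}<\delta$ together with $\sign(f)\phi_0\leq\abs{\phi_0}\leq 1$, a Chebyshev estimate shows that the good set $E=\{t:f(t)\neq0,\ \sign(f(t))\phi_0(t)\geq 1-\sqrt\delta\}$ carries most of the mass, namely $\int_E\abs{f}>1-\sqrt\delta$. I then put $g=f\chi_E/\nor{f\chi_E}_1$, so that $\nor{f-g}_1<2\sqrt\delta<\eps$, and I define the functional $\psi=\sign(f)\,\chi_E+\phi_0\,\chi_{E^c}$. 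By construction $\abs{\psi}\leq 1$ with $\abs{\psi}=1$ on $E$, so $\nor{\psi}_\infty=1$ and $\inner{g,\psi}=\int_E\abs{g}=1$. The crucial gain is that on $E$ the value $\phi_0$ is already within $\sqrt\delta$ of $\sign(f)$, whence $r:=\psi-\phi_0=\chi_E(\sign(f)-\phi_0)$ satisfies $\nor{r}_\infty\leq\sqrt\delta<\eps$.

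The main step, and the point requiring care, is to turn the pointwise datum $(\psi,g)$ into an operator $S$ that is close to $T$, has norm one, attains its norm at $g$, and still satisfies condition~(1). Here I would modify $F$ not just near $s_0$ but globally, setting $G(s)=\rho\bigl(F(s)+r\bigr)$, where $\rho$ is the pointwise truncation to $[-1,1]$. Truncation forces $\nor{G(s)}_\infty\leq 1$ for every $s$, while at the same time $G(s_0)=\rho(\phi_0+r)=\rho(\psi)=\psi$; and since $\rho$ is $1$-Lipschitz and $F(s)$ already takes values in $[-1,1]$, one gets $\nor{G(s)-F(s)}_\infty\leq\nor{r}_\infty<\eps$, hence $\nor{S-T}=\sup_s\nor{G(s)-F(s)}_\infty<\eps$. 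Letting $S$ be the operator with $S^*\delta_s=G(s)$, we have $\nor{S}=\sup_s\nor{G(s)}_\infty=1$ (attained at $s_0$) and $\nor{Sg}\geq\abs{\inner{g,G(s_0)}}=\abs{\inner{g,\psi}}=1$, so $\nor{Sg}=\nor{S}=1$.

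It remains to check that $S$ is a well-defined element of $\mathcal{L}(L_1(\mu),C(K))$ satisfying condition~(1). Continuity in measure of $G$ is immediate, since translation by the fixed $r$ and the $1$-Lipschitz truncation $\rho$ both preserve convergence in measure and $F$ is continuous in measure by hypothesis. The only genuinely delicate point is the weak$^*$-continuity of the \emph{nonlinear} map $G$, needed so that $s\mapsto\inner{g,G(s)}$ is continuous for each $g\in L_1(\mu)$; this I expect to be the main technical obstacle, but it follows from the general fact that for a finite measure a uniformly bounded net converging in measure also converges weak$^*$ in $L_\infty(\mu)=L_1(\mu)^*$ (split an $L_1$ test function into an $L_\infty$ part and an $L_1$-small part and estimate each). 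With this, $G$ is both weak$^*$-continuous and continuous in measure, so $S$ lies in $\mathcal{L}(L_1(\mu),C(K))$ and satisfies~(1), which finishes the argument; the case $Tf(s_0)<-(1-\delta)$ is entirely symmetric, taking $\psi=-\sign(f)$ on $E$.
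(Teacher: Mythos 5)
Your argument is correct and follows essentially the same route as the paper: represent $T$ by the weak$^*$-continuous map $s\mapsto T^*\delta_s$, use a Chebyshev-type estimate to isolate the set in $\Omega$ where $\sign(f)\,T^*\delta_{s_0}$ is close to $1$, restrict and renormalize $f$ there, and compose $F$ with a fixed ($s$-independent) Lipschitz modification of $L_\infty(\mu)$ that preserves continuity in measure and forces $F(s_0)$ to equal $\sign f$ exactly on that set. The only difference is cosmetic: the paper truncates $T^*\delta_s$ to $[-1+\eps/3,\,1-\eps/3]$ and then renormalizes the resulting operator, whereas you add the fixed correction $r$ and truncate to $[-1,1]$; both devices yield the same conclusion with comparable estimates.
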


\begin{proof}
Without loss of generality, we assume that there exists $s_0\in K$ such that
$$
Tf (s_0)>1-\frac{\eps^2}6.
$$
Consider the function $G:L_\infty(\mu)\longrightarrow L_\infty(\mu)$ given by
$$
G(h)= \Big(h \wedge (1-\eps/3)\Big) \vee (-1+\eps/3) \qquad \bigl(h\in L_\infty(\mu)\bigr).
$$
Since the lattice operation $G$ is continuous in the $L_\infty$ norm and $T$ satisfies condition (1), we can see that the mapping $s\longmapsto GT^*\delta_s$ is continuous in measure, hence weak$^*$-continuous. Let $\bar S$ be the element of  $\mathcal{L}(L_1(\mu),C(K))$ represented by the function $F(s) := GT^*\delta_s$.  Then
\[ \| \bar S - T \| = \sup_{s\in K}\| F(s) - T^*\delta_s \| \leq \frac{\eps}3.
\]
Let
$$
C=\left\{\omega \in \Omega ~:~ \sign\big(f(\omega)\big)T^*\delta_s(\omega) > 1-\frac{\eps}3\right\}
$$
and define $S= \overline{S}/\|\overline{S}\|$ and $g=f|_C/\|f|_C\|$,  where $f|_C$ is the restriction of $f$ to the subset $C$. It is easy to see that $S$ satisfies condition (1) and
$$
\|S-T\|\leq\|S-\overline{S}\|+\|\overline{S}-T\| = | \|\bar S\|-1| + \|\bar S - T\| \leq 2\|\bar S - T\|< \eps.
$$
Moreover, we get
\begin{eqnarray*}
 1-\frac{\eps^2}6
 &<&Tf (s_0)=\langle T^*\delta_{s_0}, f \rangle=\int_\Omega T^*\delta_{s_0}(\omega) f(\omega)\, d\mu\\
 &=&\int_{C} \sign\big(f(\omega)\big)T^*\delta_{s_0}(\omega) |f(\omega)|\, d\mu+\int_{\Omega\backslash C}  \sign\big(f(\omega)\big)T^*\delta_{s_0}(\omega) |f(\omega)|\, d\mu\\
 &\leq& \int_{C} |f(x)|\,d\mu+(1- \frac{\eps}3)\int_{\Omega\backslash C}|f(\omega)|\, d\mu\\
 &=&1-\frac{\eps}3\int_{\Omega\backslash C}|f(x)|d\mu,
 \end{eqnarray*}
which implies that
$$
\int_{\Omega\backslash C}|f(x)|\,d\mu< \frac{\eps}2.
$$
Therefore,
\begin{eqnarray*}
  \|g-f\|
 &\leq& \|g-f|_C\|+\|f|_C-f\|=2(1-\|f_C\|)\\
 &=&2\int_{\Omega\backslash C}|f(x)|d\mu< \eps
\end{eqnarray*}
On the other hand, we see that $Sg(s_0)=\langle S^* \delta_{s_0}, g \rangle = 1$ because $S^*\delta_{s_0}(\omega) = \sign\big(f(x)\big)=\sign\big(g(\omega)\big)$ for every $\omega \in C$. This completes the proof.
\end{proof}

We do not know, and it is clearly of interest, for which topological compact Hausdorff spaces $K$ all operators in $\mathcal{L}(L_1(\mu),C(K))$ satisfy condition (1).

We recall that a bounded linear operator $T$ from $L_1(\mu)$ into a Banach space $X$ is said to be \emph{Bochner representable} if there is a bounded strongly measurable function $g:\Omega \longrightarrow X$ such that
\[
Tf = \int f(\omega)g(\omega) \, d\mu(\omega) \qquad \bigl(f\in L_1(\mu)\bigr).
\]
The Dunford-Pettis-Phillips Theorem \cite[Theorem~12 p.~75]{DU} says that $T\in \mathcal{L}(L_1(\mu), X)$ is weakly compact if and only if $T$ is Bochner representable by a function $g$ which has an essentially relatively weakly compact range. Iwanik \cite{Iwa2} showed that every Bochner representable operator from $L_1(\mu)$ into $C(K)$ satisfies condition (1). Moreover, we get the following.

\begin{corollary}Let  $0<\eps <1$. Suppose that $T\in \mathcal{L}(L_1(\mu),C(K))$ (real case) has norm-one and it is Bochner representable (resp.\ weakly compact). If  $\|Tf\|>1-\frac{\eps^2}6$ for some $f\in S_{L_1(\mu)}$, then there exist a Bochner representable (resp.\ weakly compact) operator $S\in \mathcal{L}(L_1(\mu),C(K))$ with $\|S\|=1$ and $g\in S_{L_1(\mu)}$ such that $\|Sg\|=1$, $\|S-T\|<\eps$, and $\|f-g\|<\eps$.
\end{corollary}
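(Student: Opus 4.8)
The plan is to show that this corollary is just Theorem~\ref{KIwa} together with the observation that the approximating operator $S$ produced there inherits the structural hypothesis imposed on $T$. First I would invoke the two inclusions already recorded above: by the Dunford-Pettis-Phillips theorem \cite[Theorem~12 p.~75]{DU} every weakly compact $T\in\mathcal{L}(L_1(\mu),C(K))$ is Bochner representable, by a function whose range is essentially relatively weakly compact, and by Iwanik \cite{Iwa2} every Bochner representable operator satisfies condition~(1). Hence in either case $T$ satisfies condition~(1), so Theorem~\ref{KIwa} already supplies $g\in S_{L_1(\mu)}$ and a norm-one $S$ with $\|Sg\|=1$, $\|S-T\|<\eps$ and $\|f-g\|<\eps$. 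What remains is to verify that this particular $S$ is again Bochner representable (resp.\ weakly compact).

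Next I would unwind the construction of $S$ in the proof of Theorem~\ref{KIwa}. If $T$ is Bochner representable by $g:\Omega\longrightarrow C(K)$, so that $Tf=\int f(\omega)g(\omega)\,d\mu(\omega)$, then the representing kernel is $T^*\delta_s(\omega)=g(\omega)(s)$, and the lattice truncation $G$ used there acts on this kernel pointwise in $s$. Writing $\Phi:C(K)\longrightarrow C(K)$ for the map $\Phi(u)(s)=\bigl(u(s)\wedge(1-\eps/3)\bigr)\vee(-1+\eps/3)$, the operator $\bar S$ is represented by $GT^*\delta_s=\Phi\bigl(g(\cdot)\bigr)(s)$, so that $\bar S f=\int f(\omega)\,(\Phi\circ g)(\omega)\,d\mu(\omega)$. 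Thus $\bar S$, and hence its normalization $S=\bar S/\|\bar S\|$, is Bochner representable as soon as $\Phi\circ g$ is a bounded strongly measurable $C(K)$-valued function. Boundedness by $1-\eps/3$ is immediate, and strong measurability follows because $\Phi$ is a pointwise truncation, hence $1$-Lipschitz for the sup norm, so it carries the a.e.\ limit of simple functions defining $g$ to an a.e.\ limit of simple functions.

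For the weakly compact case I would then prove that $\Phi$ sends relatively weakly compact subsets of $C(K)$ into relatively weakly compact subsets. The key point is the classical description of weak convergence in $C(K)$: a uniformly bounded sequence $u_n$ converges weakly to $u$ exactly when $u_n(s)\to u(s)$ for every $s\in K$ (dominated convergence against each measure in $M(K)$). Since $\Phi$ is a pointwise truncation, $u_n(s)\to u(s)$ forces $\Phi(u_n)(s)\to\Phi(u)(s)$ for every $s$, under the same uniform bound, whence $\Phi(u_n)\to\Phi(u)$ weakly. So if $W\subset C(K)$ is relatively weakly compact, Eberlein--\v Smulian lets me extract from any sequence in $\Phi(W)$ a weakly convergent subsequence, showing $\Phi(W)$ relatively weakly compact. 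Because the essential range of $\Phi\circ g$ is contained in the $\Phi$-image of the (relatively weakly compact) essential range of $g$, the Dunford-Pettis-Phillips criterion then gives that $\bar S$, and therefore $S$, is weakly compact.

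The main obstacle is precisely this last step: verifying that the \emph{nonlinear} lattice truncation $\Phi$ preserves weak compactness. Everything else is bookkeeping around Theorem~\ref{KIwa} and the fact that a scalar multiple of a Bochner representable (resp.\ weakly compact) operator is of the same type; but since $\Phi$ is not linear I cannot simply appeal to continuity of a bounded operator and must instead argue through the pointwise characterization of weak convergence in $C(K)$ and Eberlein--\v Smulian as above.
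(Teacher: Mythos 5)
Your proposal is correct and follows essentially the same route as the paper: reduce to Theorem~\ref{KIwa} via the facts that weakly compact operators are Bochner representable (Dunford--Pettis--Phillips) and that Bochner representable operators satisfy condition (1), and then check that the truncated kernel $\Phi\circ g$ again represents a Bochner representable operator. Your treatment of the weakly compact case is actually more detailed than the paper's one-line appeal to the Dunford--Pettis--Phillips theorem, since you explicitly verify (via the pointwise characterization of weak convergence in $C(K)$ and Eberlein--\v{S}mulian) that the nonlinear truncation $\Phi$ preserves relative weak compactness of the essential range.
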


\begin{proof}
By Theorem~\ref{KIwa}, it is enough to show that if $T$ is a Bochner representable operator from $L_1(\mu)$ into $C(K)$, then $F(s) = T^*\delta_s$ is continuous in measure and that the operator $S$ defined in the proof is Bochner representable.

Let $g:\Omega \longrightarrow C(K)$ be a bounded strongly measurable function which represents $T$. It is easy to check that $F(s)= g(\cdot )(s)$ for all $s\in K$.
Since the range of $g$ is separable, the range of $T$ is separable and contained in a separable  sub-algebra $A$ of $C(K)$ with unit. By the Gelfand representation theorem, $A$ is isometrically isomorphic to $C(\bar K)$ for some compact metrizable space $\bar K$. So, we may assume that $K$ is metrizable. To show that the mapping $F(s) = T^*\delta_s=g(\omega)(s)$ is continuous in measure, assume that a sequence $(s_n)$ converges to $s$ in $K$. Then for all $\omega\in \Omega$,
\[ \lim_{n\to \infty} |g(\omega)(s_n)-g(\omega)(s)|=0.\]
By the dominated convergence theorem, we have that
\[ \lim_{n\to \infty} \sup_{f\in S_{L_\infty(\mu)}}  \int f(\omega) (g(\omega)(s_n) - g(\omega)(s)) \, d\mu(\omega) \leq \lim_{n\to \infty} \int |g(\omega)(s_n) - g(\omega)(s)| \, d\mu(\omega) =0.\]
Hence the sequence $(g(\cdot)(s_n))_n$ converges to $g(\cdot)(s)$ in measure. That is, $(F(s_n))_n$ converges to $F(s)$.

We note that  the operator $\bar S$ in the proof of Theorem~\ref{KIwa} is determined by $GT^*\delta_s = G(g(\cdot )(s))$. Since the mapping
\[ s\longmapsto G(g(\cdot)(s))(\omega) = (g(\omega)(s) \wedge (1-\eps/3))\vee(-1+\eps/3))
\]
is continuous for each $\omega\in \Omega$, the operator $\bar S$ is Bochner representable by this mapping. Finally, if $T$ is weakly compact, then the proof is done by the Dunford-Pettis-Phillips theorem.
\end{proof}

As observed in \cite{Iwa2}, the operator $T: L_1[0,1]\longrightarrow C[0,1]$ determined by $T^*\delta_s = \chi_{[0,s]}$ is not Bochner representable, but satisfies condition (1).

For condition (2), we have the following result.

\begin{theorem}\label{thm:iwanik2}
Let $0<\eps<1$. Suppose that $T\in \mathcal{L}(L_1(\mu),C(K))$ (real case) has norm-one and satisfies condition (2).  If $\|Tf\|>1-{{\eps^2}\over{4}}$ for some $f\in S_{L_1(\mu)}$, then there exist $S\in \mathcal{L}(L_1(\mu),C(K))$ with $\|S\|=1$ and $g\in S_{L_1(\mu)}$ such that $\|Sg\|=1$, $\|S-T\|<\eps$, and $\|f-g\|<\eps$. Moreover, $S$ also satisfies condition (2).
\end{theorem}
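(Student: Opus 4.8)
The plan is to follow the same broad strategy as in Theorem~\ref{KIwa}, but to replace its truncation device by a \emph{convex combination}, since condition~(2) provides no continuity in measure and a truncation of the representing function need not stay weak$^*$-continuous. Writing $F(s)=T^*\delta_s$, I will locate a point $s_0$ at which $Tf$ is nearly $1$ and at which $F$ is norm-continuous, fix a target $u\in B_{L_\infty(\mu)}$ with $\|u-F(s_0)\|_\infty$ small and $\langle g,u\rangle=1$ for a suitable $g$, and set
$$S^*\delta_s=(1-\phi(s))\,F(s)+\phi(s)\,u$$
for a continuous $\phi\colon K\to[0,1]$ with $\phi(s_0)=1$ supported in a small neighbourhood of $s_0$. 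Then $s\mapsto S^*\delta_s$ is weak$^*$-continuous (a continuous scalar times the weak$^*$-continuous $F(s)$, plus a continuous scalar times the fixed $u$), so $S\in\mathcal{L}(L_1(\mu),C(K))$, and $\|S^*\delta_s\|_\infty\le(1-\phi(s))+\phi(s)=1$ by convexity. Moreover $\{S^*\delta_s:s\in G\}$ lies in the separable set of convex combinations of $\{F(s):s\in G\}$ with the fixed vector $u$, so $S$ again satisfies condition~(2).

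Granting a good $s_0\in G$ with $Tf(s_0)>1-\eps^2/4$ (the opposite-sign case being symmetric, with $-\sign f$ in place of $\sign f$ below), I put $C=\{\omega:\sign(f(\omega))\,F(s_0)(\omega)>1-\eps/2\}$. Splitting $Tf(s_0)=\int_C+\int_{\Omega\setminus C}$ and using $\|F(s_0)\|_\infty\le1$ gives $1-\eps^2/4<1-\tfrac{\eps}{2}\int_{\Omega\setminus C}|f|$, hence $\int_{\Omega\setminus C}|f|\,d\mu<\eps/2$, so $g:=f\chi_C/\|f\chi_C\|_1$ obeys $\|f-g\|_1\le 2\int_{\Omega\setminus C}|f|<\eps$. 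Taking $u=\sign(f)\chi_C+F(s_0)\chi_{\Omega\setminus C}$, one has $\|u\|_\infty\le1$, $\langle g,u\rangle=\int_C|g|=1$, and on $C$ the estimate $|u-F(s_0)|=1-\sign(f)F(s_0)<\eps/2$, whence $\|u-F(s_0)\|_\infty\le\eps/2$. With $S$ as above, $S^*\delta_{s_0}=u$ forces $\|Sg\|_\infty=\|S\|=1$. Finally $\|S-T\|=\sup_s\phi(s)\|u-F(s)\|_\infty\le\|u-F(s_0)\|_\infty+\sup_{s\in\operatorname{supp}\phi}\|F(s)-F(s_0)\|_\infty$, so it remains to make the last supremum $<\eps/2$, which is precisely norm-continuity of $F$ at $s_0$.

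The point $s_0$, and with it the only essential use of condition~(2), comes from a Baire-category argument. Shrinking $G$ to a dense $G_\delta$ (still co-meager, hence a Baire space, and still with $\{F(s):s\in G\}$ norm-separable), fix a countable norm-dense set $\{F(s_j)\}_j$ in $\{F(s):s\in G\}$. For each $j$ the function $\psi_j(s):=\|F(s)-F(s_j)\|_\infty$ is weak$^*$-lower semicontinuous on $K$ (the norm is weak$^*$-l.s.c.\ and $F$ is weak$^*$-continuous), so $A_j:=\{s\in G:\psi_j(s)\le\eps/8\}$ is relatively closed in $G$, and by density $G=\bigcup_jA_j$. Since $V:=\{s\in K:|Tf(s)|>1-\eps^2/4\}$ is open and $G$ is dense, $V\cap G$ is a nonempty Baire space, so some $A_{j_0}\cap(V\cap G)$ has nonempty interior $W$ in $G$; for $s',s''\in W$ this yields $\|F(s')-F(s'')\|_\infty\le\psi_{j_0}(s')+\psi_{j_0}(s'')\le\eps/4$. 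Choosing $s_0\in W$ gives a point of $V\cap G$ at which $F$ varies by at most $\eps/4$ on $W$.

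The remaining subtlety is that $\|S-T\|$ is a supremum over all of $K$, whereas the estimate just obtained controls $\|F(s)-F(s_0)\|$ only for $s$ in the relatively open $W\subset G$. This is bridged by the fact that closed balls of $L_\infty(\mu)=L_1(\mu)^*$ are weak$^*$-closed: picking an open $U\ni s_0$ in $K$ with $U\cap G\subset W$ and $\operatorname{supp}\phi\subset U$, every $s\in U$ is a weak$^*$-limit of points of $U\cap G$, so $F(s)$ lies in the weak$^*$-closed ball $\{v:\|v-F(s_0)\|_\infty\le\eps/4\}$ as well; hence $\sup_{s\in\operatorname{supp}\phi}\|F(s)-F(s_0)\|_\infty\le\eps/4$ and $\|S-T\|\le\eps/2+\eps/4<\eps$. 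I expect the main obstacle to be exactly this passage from the separability hypothesis to genuine norm-continuity of $F$ at an \emph{interior} point $s_0$, together with the propagation of the norm estimate from the co-meager set $G$ to a full $K$-neighbourhood; once these are in hand the algebra of the perturbation is routine, and the convex-combination construction manifestly keeps $S$ within condition~(2).
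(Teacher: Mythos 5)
Your argument is correct, and it reaches the conclusion by a genuinely different route from the paper. The paper exploits condition (2) by fixing a dense sequence $(T^*\delta_{s_k})_k$ in $\{T^*\delta_s: s\in G\}$, forming the countable family of level sets $A_i=\{\omega: a<T^*\delta_{s_k}(\omega)<b\}$ ($a,b\in\Q$), and proving that the essential infima $u_i(s)=\mathrm{ess.}\inf\{T^*\delta_s(\omega):\omega\in A_i\}$ are upper semicontinuous (and the suprema lower semicontinuous); the co-meager set of common continuity points then yields a point $s_0$ and a set $A=A_{i_0}$ carrying most of the mass of $f$ on which $T^*\delta_s\geq 1-\eps$ for all $s$ in a neighbourhood $U$ of $s_0$, and the perturbation $M(s)=T^*\delta_s+\chi_A h(s)(1-T^*\delta_s)$ pushes the representing function up to $1$ on $A$. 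You instead run a Namioka-type Baire-category argument directly on the oscillation of $F(s)=T^*\delta_s$: weak$^*$ lower semicontinuity of $s\mapsto\|F(s)-F(s_j)\|_\infty$ plus norm-separability on a dense $G_\delta$ produces a relatively open piece of $G$ inside $V=\{|Tf|>1-\eps^2/4\}$ on which $F$ has norm-oscillation at most $\eps/4$, and the weak$^*$-closedness of balls in $L_\infty(\mu)=L_1(\mu)^*$ propagates this bound from $U\cap G$ to all of the open set $U$ — exactly the bridge one needs, and you identified it correctly. Your perturbation is then a convex combination $(1-\phi(s))F(s)+\phi(s)u$ with a fixed norming functional $u$, rather than the paper's pointwise push-up; this makes the norm bound $\|S^*\delta_s\|\leq 1$ and the preservation of condition (2) (convex combinations of a separable set with a fixed vector) completely transparent, handles the sign of $f$ without the preliminary isometric reduction to $f\geq 0$, and even gives the slightly better estimate $\|S-T\|\leq 3\eps/4$. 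The trade-off is that your approach needs the duality $L_1(\mu)^*=L_\infty(\mu)$ (available here since $\mu$ is finite) to get weak$^*$-closed balls, whereas the paper's semicontinuity argument works more hands-on with the measure; both uses of condition (2) are essentially equivalent in strength, and all the steps of your proof check out.
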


\begin{proof}
By using a suitable isometric isomorphism, we may first assume that $f$ is nonnegative. Let $G$ be the co-meager set in the condition (2) and $(T^*\delta_{s_k})_k$ be a sequence which is $\|\cdot \|_\infty$-dense in the closure of $\{T^*\delta_s~:~s\in G\}\subset L_\infty(\mu)$. Observe that the sets
\[ \{\omega \in \Omega: a < T^*\delta_{s_k} (\omega) <b \}\]
where $a, b\in \mathbb{Q}$ and $k\geq 1$, form a countable family $\{A_i\}_i$ of measurable subsets of $\Omega$. We define, for each $i$, the functions
\[
u_i(s) = {\rm ess.}\inf\{ T^*\delta_s(\omega) : \omega \in A_i\} \quad \text{and} \quad v_i(s) = {\rm ess.}\sup\{ T^*\delta_s(\omega) : \omega \in A_i\}.
\]
Let $U_i$ and $V_i$ be the set of all continuity points of $u_i$ and $v_i$ for all $i$, respectively. Let $F$ be the intersection of all subsets $U_i$'s and $V_i$'s. We claim that the functions $u_i$'s are upper semi-continuous and the functions $v_i$'s are lower semi-continuous. Indeed, recall that
\[ v_i(s) = \inf\Big\{ \lambda \in \mathbb{R} : \mu\{ \omega \in A_i: T^*\delta_s(\omega)>\lambda\} =0\Big\},\] where $\inf \emptyset = \infty$ and $\inf \mathbb{R} = -\infty$. To show that the set
$\{ s : \lambda< v_i(s)\}$ is open in $K$ for all $\lambda\in \mathbb{R}$, suppose that $v_i(s_0)>\lambda_0$ for some $s_0\in K$ and $\lambda_0\in \mathbb{R}$. It suffice to prove that there is an open neighborhood $V$ of $s_0$ such that $V\subset \{ s : v_i(s) >\lambda_0\}$. We note that
$\mu\{ \omega \in A_i: T^*\delta_{s_0}(\omega) >\lambda_0\} >0$ and there exists $\lambda_1>\lambda_0$ such that
\[ \mu\{ \omega\in A_i : T^*\delta_{s_0}(\omega) >\lambda_1\} >0.\]
Let $E = \{ \omega\in A_i : T^*\delta_{s_0}(\omega) >\lambda_1\} $. Then
\[ \frac{1}{\mu(E)}\int_E T^*\delta_{s_0}(\omega) d\mu(\omega) > \lambda_1>\lambda_0.\]
Since the map $s\longmapsto T^*\delta_s$ is weak$^*$ continuous on $L_\infty(\mu)$, the set
\[ V:= \left \{ s\in K :  \frac{1}{\mu(E)}\int_E T^*\delta_{s}(\omega) d\mu(\omega)  >\lambda_1    \right\}\]
is an open subset containing $s_0$. We note that  $\mu\{ \omega\in A_i : T^*\delta_s(\omega) >\lambda_1\}>0$ for all $s\in V$. Otherwise, there is $s_1\in V$ such that $\mu\{ \omega \in A_i : T^*\delta_{s_1}(\omega) >\lambda_1\}=0$. Then
$T^*\delta_{s_1}(\omega)\leq \lambda_1$ almost everywhere $\omega\in A_i$ and
\[  \frac{1}{\mu(E)}\int_E T^*\delta_{s_1}(\omega) d\mu(\omega)  \leq \lambda_1.\] This is a contradiction to the fact that $s_1$ is an element of $V$, which implies that $v_i(s)>\lambda_0$ for all $s\in V$ and $V\subset \{ s : v_i(s) >\lambda_0\}$. This gives the lower semi-continuity of $v_i$. The upper semi-continuity of $u_i$ follows from the fact that $-u_i$ is lower semi-continuous. The claim is proved.

We deduce then that the set $F$ is co-meager (c.f.\ see \cite[\S~32~II. p.~400]{Kur}). Since the set $\{s\,:\, s\in K ,\, |Tf(s)| > 1-\frac{\eps^2}4\}$ is nonempty and open, there exists $s_0\in F\cap G$ such that  $|Tf(s_0)|>1-{{\eps^2}\over{4}}$. Without loss of generality, we may assume that
$$Tf(s_0) = \inner{T^*\delta_{s_0}, f}>1-{{\eps^2}\over{4}}.$$
Because of the denseness of the sequence $(T^*\delta_{s_k})_k$, there exists $k_0\in \mathbb{N}$ such that
\[Tf(s_{k_0}) = \inner{T^*\delta_{s_{k_0}}, f}>1-{{\eps^2}\over{4}}  \ \ \ \ \text{ and } \ \ \
\|T^* \delta_{s_0} - T^*\delta_{s_{k_0}} \| < \frac{\eps}{4}.\]
Fix $q\in \mathbb{Q}$ such that $1-\frac{3}{4}\eps<q<1-\frac{\eps}2$ and let $$C=\left\{\omega\in \Omega~:~T^*\delta_{s_{k_0}}(\omega)> q\right\}.$$
Then \begin{align*}
1-{{\eps^2}\over{4}}
<&\inner{T^*\delta_{s_{k_0}}, f}=\int_\Omega T^*\delta_{s_{k_0}}(\omega)f(\omega)\,d\mu\\
=&\int_{C} T^*\delta_{s_{k_0}}(\omega)f(\omega)\,d\mu+\int_{\Omega\setminus C} T^*\delta_{s_{k_0}}(\omega)f(\omega)\,d\mu\\
\leq&\int_{C}f(\omega)\,d\mu+\left(1-{{\eps}\over{2}}\right)\int_{\Omega\setminus C}f(\omega)\,d\mu\\
=&1-{{\eps}\over{2}}\int_{\Omega \setminus C}f(\omega)\,d\mu.
\end{align*}
Hence we have that
\[
\int_{\Omega \setminus C} f(\omega) \, d\mu < \frac{\eps}2\quad  \text{and}\quad \int_{C} f(\omega) \, d\mu > 1-\frac{\eps}2.
\]
Let $B_n = \{ \omega \,:\, q<T^*\delta_{s_{k_0}}(\omega) <n\}$ for each $n$. Then $C=\bigcup_{n=1}^\infty B_n$ and  there exists $n_0$ such that
\[ \int_{B_{n_0}}  f(\omega) \, d\mu > 1-\frac{\eps}2.\]
Hence $B_{n_0} = A_{i_0}$ for some ${i_0}$ and $\mu(A_{i_0})>0$. This implies that  $u_{i_0}(s_{k_0})\geq q$ and $u_{i_0}(s_0) \geq q-\frac{\eps}{4}>1-\eps$. Setting $A= A_{i_0}$, it is also clear that
\[
\left\|{{f|_A}\over{\|f|_A\|}}-f\right\|<\eps.
\]
Since $u_{i_0}$ is continuous at $s_0$, there exist an open neighborhood $U$ of $s_0$  and a continuous function $h: K\longrightarrow [0,1]$ such that $u_{i_0}(s)>1-{{\eps}}$ for all $s\in U$, $h(s_0)=1$ and $h(U^c)=0$. We define a weak$^*$-continuous map $M:K\longrightarrow L_\infty(\mu)$ by
$$
M(s)(\omega)=T^*\delta_s(\omega)+\chi_A(\omega)h(s)(1-T^*\delta_s(\omega)) \qquad \bigl(\omega\in \Omega,\ s\in K\bigr).
$$
We note that $M(s_{0})=1$ for all $\omega \in A$. It is also easy to  get that $$
\|M(s)(\omega)-T^*\delta_s(\omega)\|= \|\chi_A(\omega)h(s)(1-T^*\delta_s(\omega))\|< \eps \quad \text{and} \quad \sup_{s\in K}\|M(s)\|= 1.
$$
Let $S$ be the operator represented by the function $M$. Then $S$ satisfies  condition (2), $S\left({{f|_A}\over{\|f|_A\|}}\right)(s_{0})=1$ and $\|S-T\|<\eps$.
\end{proof}

As shown in \cite{Iwa2}, the Dunford-Pettis-Phillips Theorem implies that every weakly compact operator $T$ in from $L_1(\mu)$ to an arbitrary Banach space $Y$ has separable range, hence the range of its weakly compact adjoint $T^*$ is also separable and so $T$ satisfies condition (2). On the other hand, there are Bochner representable operators which do not satisfy the condition (2) (see \cite{Iwa2}). Indeed, let $\mu$ be a strictly positive probability measure on $\mathbb{N}$ and consider the operator $T\in \mathcal{L}(L_1(\mu), C(\{0,1\} ^{\mathbb{N}})$ defined by $Tf(s) = \int f(n)\pi_n(s)\,  d\mu(n)$, where $\pi_n$ be the $n$-th natural projection on $\{0,1\}^\mathbb{N}$. Then $T$ is Bochner representable, while  $\{ T^*\delta_s : s\in G\}$ is non-separable in $L_\infty(\mu)$ for every uncountable subset $G$ of $\{0,1\}^\mathbb{N}$.

Finally, let us comment that it is also observed in \cite{Iwa2} that  if $K$ has a countable dense subset of isolated points, then condition (2) is automatically satisfied for all $T\in \mathcal{L}(L_1(\mu), C(K))$. Actually, in this case, $C(K)$ has the so-called property $(\beta)$ and then the pair $(X, C(K))$ has the BPBp for all Banach spaces $X$ \cite[Theorem~2.2]{AAGM2}.

It would be of interest to characterize those topological Hausdorff compact spaces $K$ such that $(X,C(K))$ has the BPBp for every Banach space $X$.


\begin{thebibliography}{99}

\bibitem{Acosta-RACSAM}
\textsc{M.~D.~Acosta},
Denseness of norm attaining mappings, \emph{Rev. R. Acad. Cien. Serie A. Mat.} {\bf 100} (2006), 9--30.

\bibitem{AAP}
\textsc{M.~D.~Acosta, F.~J.~Aguirre and R.~Pay\'{a}},
There is no bilinear Bishop-Phelps theorem, \emph{Israel J. Math.} {\bf 93}
(1996), 221-227.

\bibitem{AAGM2}
\textsc{M.~D.~Acosta, R.~M.~Aron, D.~Garc\'{\i}a and M.~Maestre}, The
Bishop-Phelps-Bollob\'{a}s theorem for operators, \emph{J. Funct. Anal.}
{\bf 254} (2008), 2780-2799.

\bibitem{ABCCKLLM} \textsc{M.~D.~Acosta, J.~Becerra-Guerrero, Y.~S.~Choi, M.~Ciesielski, S.~K.~Kim, H.~J.~Lee, M.~L.~Lorenc\c{o} and M.~Mart\'{\i}n}, The Bishop-Phelps-Bollob\'{a}s property for operators between spaces of continuous functions, \emph{Preprint} (2013).

\bibitem{ABGM}
\textsc{M.~D.~Acosta, J.~Becerra-Guerrero,
 D.~Garc\'ia and M.~Maestre},
The Bishop-Phelps-Bollob\'as Theorem
 for bilinear forms, \emph{Trans. Amer. Math. Soc.} (to appear).

\bibitem{ABGKM}
\textsc{M.~D.~Acosta, J.~Becerra-Guerrero,
 D.~Garc\'ia, S.~K.~Kim, and M.~Maestre},
Bishop-Phelps-Bollob\'{a}s property for certain spaces of opertors, \emph{preprint} (2013).

\bibitem{AGM}
\textsc{M.~D.~Acosta, D.~Garc\'{\i}a and M. Maestre}, {A multilinear
Lindenstrauss theorem},  \emph{J. Funct. Anal.} {\bf 235} (2006), 122-136.

\bibitem{ACK}
\textsc{R.~M.~Aron, B.~Cascales and O.~Kozhushkina}, The Bishop-Phelps-Bollob\'as theorem and Asplund operators, \emph{Proc. Amer. Math. Soc.} {\bf 139} (2011), 3553-3560.

\bibitem{ACGM}
\textsc{R.~M.~Aron, Y.~S.~Choi, D.~Garc\'{\i}a and  M.~Maestre},
The Bishop-Phelps-Bollob\'{a}s Theorem for ${\mathcal{L}}(L_1(\mu), L_\infty[0,1])$, \emph{Adv. Math.} {\bf 228} (2011), 617--628.

\bibitem{ACKLM}
\textsc{R.~M.~Aron, Y.~S.~Choi,  S.~K.~Kim, H.~J.~Lee and  M.~Mart\'in},
The Bishop-Phelps-Bollob\'{a}s version of Lindenstrauss properties $A$ and $B$, \emph{Preprint} (2013).

\bibitem{AFW}
\textsc{R.~Aron, C.~Finet and E.~Werner}, Some remarks on norm
attaining N-linear forms, \emph{In: Functions Spaces (K. Jarosz, Ed.), Lecture Notes in Pure and Appl. Math.} {\bf 172}, Marcel-Dekker, NewYork, 1995, 19-28.


\bibitem{BP}
\textsc{E.~Bishop and R.~R.~Phelps}, {A proof that every Banach space is subreflexive},
\emph{Bull. Amer. Math. Soc.} {\bf 67} (1961), 97-98.

\bibitem{Bol}
\textsc{B.~Bollob\'{a}s}, {An extension to the Theorem of Bishop and Phelps},
\emph{Bull. London Math. Soc.} {\bf 2} (1970), 181-182.

\bibitem{Bou}
\textsc{J.~Bourgain}, {On dentability and the Bishop-Phelps property},
\emph{Israel J. Math.}  {\bf 28} (1977), 265-271.

\bibitem{CasGuiKad}
\textsc{B.~Cascales, A.~J.~Guirao, and V.~Kadets}, {A Bishop-Phelps-Bollob\'{a}s type theorem for uniform algebras}, \emph{Preprint.}

\bibitem{C}
\textsc{ Y. S. Choi}, {Norm attaining bilinear forms on $L^1[0,1]$},
\emph{ J. Math. Anal. Appl. }  {\bf 211} (1997), 295-300.

\bibitem{CK1}
\textsc{Y. S. Choi and S. G. Kim}, {Norm or numerical radius attaining
multilinear mappings and polynomials}, \emph{J. London Math. Soc.}
{\bf 54 (1)} (1996), 135-147.

\bibitem{CK2}
\textsc{Y.~S.~Choi and S.~K.~Kim}, {The Bishop-Phelps-Bollob\'as theorem for operators from $L_1(\mu)$ to Banach spaces with the Radon-Nikod\'ym property}, \emph{J. Funct. Anal.} {\bf 261} (2011), 1446-1456.

\bibitem{ChoiKimSK}
\textsc{Y.~S.~Choi and S.~K.~Kim}, {The Bishop-Phelps-Bollob\'{a}s property and lush spaces}, \emph{J. Math. Anal. Appl.} \textbf{390} (2012), 549-555.


\bibitem{DF}
\textsc{A.~Defant and K.~Floret}, \emph{Tensor Norms and Operator
Ideals}, {North-Holland Math. Studies} {\bf 176}, Elsevier, Amsterdam, 1993.

\bibitem{DU}
\textsc{J.~Diestel and J.~J.~Uhl, Jr.}, \emph{Vector measures}, Mathematical Surveys {\bf 15}, AMS, 1977.

\bibitem{Doob}
\textsc{J.~L.~Doob}, \emph{Measure Theory}. {Graduate Text in Math.}  {\bf 143}, Springer, 1993.

\bibitem{DS}
\textsc{N.~Dunford and J.~Schwartz}, \emph{Linear operators}, Vol.~I, Interscience, N.~Y., 1958.

\bibitem{FP}
\textsc{C.~Finet and R.~Pay\'{a}}, {Norm attaining operators from $L_1$ into $L_{\infty}$},
\emph{Israel J. Math.} {\bf 108} (1998), 139-143.

\bibitem{Iwa}
\textsc{A.~Iwanik}, {Norm attaining operators on Lebesgue spaces}, \emph{Pacific J. Math.} {\bf 83} (1979), 381-386.

\bibitem{Iwa2}
\textsc{A.~Iwanik}, {On norm-attaining operators acting from $L^1(\mu)$ to $C(S)$}, \emph{Proceedings of the 10th Winter School on Abstract Analysis (Srn\'i, 1982) Rend. Circ. Mat. Palermo.} {\bf 2} (1982), 147-152.

\bibitem{JW} \textsc{J.~Johnson and J.~Wolfe}, {Norm attaining operators}, \emph{Studia Math.} {\bf 65} (1979), 7-19.

\bibitem{Kim-c_0} \textsc{S.~K.~Kim}, {The Bishop-Phelps-Bollob\'as Theorem for operators from $c_0$ to uniformly convex spaces},  \emph{Israel. J. Math.} (to appear).

\bibitem{KimLee} \textsc{S.~K.~Kim and H.~J.~Lee}, {Uniform convexity and Bishop-Phelps-Bollob\'{a}s property}, \emph{Canadian J. Math.} (to appear).

\bibitem{KL}
\textsc{J.~Kim and H.~J.~Lee}, {Strong peak points and strongly norm attaining points with applications to denseness and polynomial
numerical indices}, \emph{J. Funct. Anal.} {\bf 257} (2009), 931-947.

\bibitem{Kur}
\textsc{K.~Kuratowski}, \emph{Topology}, Vol 1, Academic press, 1966.

\bibitem{Lac}
\textsc{H.~E.~Lacey}, \emph{The isometric theory of classical Banach spaces}, Springer, 1974.

\bibitem{Lin}
\textsc{J.~Lindenstrauss}, {On operators which attain their norm},
\emph{Israel J. Math.} {\bf 1} (1963), 139-148.

\bibitem{Par}
\textsc{J.~R.~Partington},  Norm attaining operators, \emph{Israel J. Math.} {\bf 43} (1982), 273-276.

\bibitem{PS}
\textsc{R.~Pay\'{a} and Y.~Saleh},  {Norm attaining operators from $L_1(\mu)$ into $L_{\infty}(\nu)$},
\emph{Arch. Math.} {\bf 75} (2000), 380-388.

\bibitem{Sch}
\textsc{W.~Schachermayer}, {Norm attaining operators and renorming of Banach spaces},
\emph{Israel J. Math.} {\bf 44} (1983), 201-212.

\bibitem{Sch2}
\textsc{W.~Schachermayer}, {Norm attaining operators on some classical Banach spaces},
\emph{Pacific J. Math.} {\bf 105} (1983), 427-438.

\bibitem{Schaefer}
\textsc{H.~H.~Schaefer}, \emph{Banach lattices and positive operators}, {Springer}, 1974.

\end{thebibliography}
\end{document}